\newcommand{\bc}{\begin{center}}
\newcommand{\ec}{\end{center}}
\newcommand{\ba}{\begin{array}}
\newcommand{\ea}{\end{array}}
\newcommand{\be}{\begin{eqnarray}}
\newcommand{\ee}{\end{eqnarray}}
\newcommand{\bel}{\begin{eqnarray}\label}
\newcommand{\eel}{\end{eqnarray}}
\newcommand{\bes}{\begin{eqnarray*}}
\newcommand{\ees}{\end{eqnarray*}}
\newcommand{\bn}{\begin{enumerate}}
\newcommand{\en}{\end{enumerate}}
\definecolor{MIT}{cmyk}{.24, 1.00, .78, .17} 
\definecolor{pink}{cmyk}{0, 1, 0, 0} 
\definecolor{darkgreen}{cmyk}{1,0, 1, 0}
\newcommand{\iid}{{\it i.i.d.\ }}
\newtheorem{theorem}{Theorem}
\newtheorem*{theorem*}{Theorem}
\newtheorem{lemma}[theorem]{Lemma}
\newtheorem{proposition}[theorem]{Proposition}
\newtheorem*{proposition*}{Proposition}
\newcommand{\tr}{\mathop{\mathsf{Tr}}}
\newcommand{\simiid}{\overset{\text\small\rm{iid}}{\sim}}
\newcommand{\leadeq}[2][4]{\MoveEqLeft[#1] #2 \nonumber}
\newcommand{\mockalph}[1]{}
\newcommand{\subalign}[1]{%
\vcenter{%
\Let@ \restore@math@cr \default@tag
\baselineskip\fontdimen10 \scriptfont\tw@
\advance\baselineskip\fontdimen12 \scriptfont\tw@
\lineskip\thr@@\fontdimen8 \scriptfont\thr@@
\lineskiplimit\lineskip
\ialign{\hfil$\m@th\scriptstyle##$&$\m@th\scriptstyle{}##$\crcr
  #1\crcr
}%
}
}
\algnewcommand\algorithmicinput{\textbf{Input:}}
\algnewcommand\Input{\item[\algorithmicinput]}
\algnewcommand\algorithmicoutput{\textbf{Output:}}
\algnewcommand\Output{\item[\algorithmicoutput]}
\newcommand{\cC}{\mathcal{C}}
\newcommand{\cD}{\mathcal{D}}
\newcommand{\cF}{\mathcal{F}}
\newcommand{\cM}{\mathcal{M}}
\newcommand{\cN}{\mathcal{N}}
\newcommand{\cS}{\mathcal{S}}
\newcommand{\bone}{\mathbf{1}}
\newcommand{\R}{\mathbbm{R}}
\newcommand{\p}{\mathbbm{P}}
\newcommand{\E}{\mathbbm{E}}
\newcommand{\1}{\mathbbm{1}}
\newcommand{\pn}{\p_{\kern-0.25em n}}
\newcommand{\pnm}{\p_{\kern-0.25em n,m}}
\newcommand{\psubm}{\p_{\kern-0.25em m}}
\newcommand{\psubp}{\p_{\kern-0.25em p}}
\newcommand{\cfi}{\cF_{\kern-0.25em \infty}}
\newcommand{\Var}{\mathrm{Var}}
\newcommand{\argmin}{\mathop{\mathrm{argmin}}}
\newcommand{\argmax}{\mathop{\mathrm{argmax}}}
\newcommand{\eps}{\varepsilon}
\newlength{\minipagewidth}
\newcommand{\dimone}{n_1}
\newcommand{\dimtwo}{n_2}
\newcommand{\Done}{D}
\newcommand{\Dtwo}{\tilde{D}}
\newcommand{\AOone}{\bone}
\newcommand{\AOtwo}{\tilde{\bone}}
\newcommand{\subG}{\mathsf{subG}}
\newcommand{\defn}{:=}
\newcommand{\id}{\mathsf{id}}
\newcommand{\pir}{\pi_1^{\mathsf{r}}}
\newcommand{\pirtwo}{\pi_2^{\mathsf{r}}}
\newcommand{\diam}{\mathsf{diam}}
\newcommand{\thetals}{\hat \theta^{\sf ls}}
\newcommand{\thetagls}{\hat \theta^{\sf gls}}
\newcommand{\thetasvt}{\hat \theta^{\sf svt}}
\newcommand{\thetasoft}{\hat \theta^{\sf soft}}
\newcommand{\gaussiannoise}{Z}
\begin{document}


\begin{frontmatter}

\title{Estimation of Monge Matrices}
\runtitle{Estimation of Monge Matrices}

\author{Jan-Christian H\"utter, Cheng Mao, Philippe Rigollet and Elina Robeva}
\runauthor{H\"utter, Mao, Rigollet and Robeva}
\affiliation{Massachusetts Institute of Technology and Yale University}


%
%
%
%
%
%

\begin{abstract}
Monge matrices and their permuted versions known as pre-Monge matrices naturally appear in many domains across science and engineering. While the rich structural properties of such matrices have long been leveraged for algorithmic purposes, little is known about their impact on statistical estimation. In this work, we propose to view this structure as a shape constraint and study the problem of estimating a Monge matrix subject to additive random noise. More specifically, we establish the minimax rates of estimation of Monge and pre-Monge matrices. In the case of pre-Monge matrices, the minimax-optimal least-squares estimator is not efficiently computable, and we propose two efficient estimators and establish their rates of convergence. Our theoretical findings are supported by numerical experiments. 
\end{abstract}


\end{frontmatter}

\section{Introduction}

A matrix $\theta \in \R^{\dimone \times \dimtwo}$ is called a \emph{Monge matrix}~\cite{Hof63} or a \emph{submodular matrix}~\cite{QueSpiTar98}, if 
\begin{equation}
\label{EQ:defMonge}
\theta_{i, j} + \theta_{k, \ell} \le \theta_{i, \ell} + \theta_{k, j} , \quad \text{ for all } 1 \le i \le k \le \dimone, \, 1 \le j \le \ell \le \dimtwo .
\end{equation}
In addition, a matrix $\theta \in \R^{\dimone \times \dimtwo}$ is called an \emph{anti-Monge matrix} or a \emph{supermodular matrix} if $-\theta$ is a Monge matrix. 
The Monge property dates back to Gaspard Monge's work on optimal transport~\cite{Mon81}.
Since then, it has been widely used and studied in optimization, discrete mathematics and computer science~\cite{Hof63, Aggetal87, CecSza90, RudWoe95, BurKliRud96, Bur07} as it allows for simple and fast algorithms in a variety of instances~\cite{Hof63, BurKliRud96, PfeRudWoe94, Par91, Buretal98}.

Many of these problems turn out to be invariant under relabeling of the rows and columns of the Monge matrix. Consequently, we 
introduce the following definition. A matrix $\theta \in \R^{\dimone \times \dimtwo}$ is called \emph{pre-Monge} if there exist permutations $\pi_1 : [\dimone] \to [\dimone]$ and $\pi_2 : [\dimtwo] \to [\dimtwo]$ such that the matrix $\theta( \pi_1, \pi_2 )$ defined by
$$ 
\theta( \pi_1, \pi_2 )_{i, j} = \theta ( \pi_1(i), \pi_2(j) ), \quad \text{ for all } (i, j) \in [\dimone] \times [\dimtwo],
$$
is Monge. Note that the terminology \emph{permuted Monge} has also been used to define the same object~\cite{BurKliRud96}.
A pre-anti-Monge matrix is defined analogously. 
Like Monge matrices, pre-Monge matrices have also been studied in the context of optimization~\cite{BurDeiWoe98, CelDeiWoe18} where the latent permutation yields new computational challenges. For example, even checking that a matrix is pre-Monge is a nontrivial algorithmic task~\cite{KliRudWoe95, DeiRudWoe96}.

However, previous work on pre-Monge matrices has focused on the noiseless setting and algorithms typically fail when the pre-Monge matrix is contaminated by random noise.
This motivates us to take a statistical approach and study estimation of a pre-Monge matrix under random noise.

\subsection{Geometric interpretation and spectral ordering.}

The Monge property has strong ties with geometric properties of certain datasets, starting with the seminal work of Monge on optimal transport~\cite{Mon81}. In this subsection, we demonstrate how the Monge property arises in the context of seriation~\cite{Ken69, Ken70, AtkBomHen98, FogAspVoj14,FogJenBac13, FlaMaoRig19} where the goal is to recover the latent ordering of objects based on pairwise distances or inner products.

Let $X \in \R^{n \times d}$ be a data matrix with rows $x_1^\top, \dots, x_n^\top \in \R^d$. Moreover, suppose that for all $i,j \in [n-1]$, we have that $(x_{i+1} - x_{i})^\top (x_{j+1} - x_{j}) \ge 0$. In other words, the differences between consecutive points form an acute angle so that the points $x_1, \ldots, x_n$ may be ordered along a common direction. In this case,  it is easy to check that the Gram matrix $\theta = X X^\top$ is an anti-Monge matrix, and the distance matrix $D$, defined by $D_{i,j} = \|x_{i} - x_{j} \|_2^2$ for $i,j \in [n]$, is a Monge matrix. 

Suppose that we do not know the order of the points, or equivalently, we observe $x_{\pi(1)}, \dots, x_{\pi(n)}$, where there is an unknown permutation $\pi : [n] \to [n]$. 
How can we reorder the points in order to recover the above geometric structure (i.e. so that the differences between consecutive points form an acute angle)? 
Intuitively, assuming that such a reordering exists suggests that the $n$ points should approximately lie along a hidden direction. Therefore, we can apply principal component analysis as follows. 
Let us assume without loss of generality that the points are centered so that $\sum_{i=1}^n x_i = 0$ and thus $\sum_{i=1}^n \theta_{i, j} = \sum_{j=1}^n \theta_{i, j} = 0$. 
Then the leading right singular vector of $X$ gives the hidden direction, and the leading left singular vector $v$ of $X$, i.e., the leading eigenvector of the Gram matrix $\theta$ is the first principal component of the data points. The entries of $v$ then give a one-dimensional embedding of the points, from which we easily recover the original order. 

Indeed, this intuition can be made rigorous using Corollary~2.6 of~\cite{Fie01}, which is a variant of the Perron-Frobenius theorem and states that the leading eigenvector of a doubly centered anti-Monge matrix (i.e. having row and column sums equal to zero) is monotone. 
Hence the leading eigenvector $v$ of the Gram matrix $\theta$ is monotone. 
If the unknown permutation $\pi$ relabels the points, then the leading eigenvector of the Gram matrix becomes $v_\pi$, defined by $(v_{\pi})_i = v_{\pi(i)}$. 
As a result, sorting the entries of $v_\pi$ recovers the permutation $\pi$ and, therefore, the latent order of the points. 
The above method for spectral ordering is similar to the one for seriation proposed in \cite{AtkBomHen98}. 

%

\subsection{Our contribution}

In this work, we study the estimation of pre-(anti-)Monge matrices under additive sub-Gaussian noise. 
Statistically, we establish the minimax rates of estimation (up to logarithmic factors) for both Monge and pre-Monge matrices in Sections~\ref{sec:monge} and~\ref{sec:minimax} respectively, where the upper bounds are achieved by the least-squares estimators. 

Algorithmically, for estimating pre-Monge matrices, we further introduce two efficient estimators and study their rates of convergence.
The {\sf Variance Sorting} estimator introduced in Section~\ref{sec:v-sort}, as the name suggests, employs second-order information to estimate the latent permutation.
In Section~\ref{sec:svt}, we study the singular value thresholding estimator based on our result (Proposition~\ref{prop:low-rank-approx}) on the approximation of pre-Monge matrices by low-rank ones. 

Furthermore, we provide various numerical experiments in Section~\ref{sec:numerics} to corroborate the theoretically established rates of estimation. Using Dykstra's projection algorithm, we give a detailed implementation of the least-squares estimator for (anti-)Monge matrices, which is of practical interest. 
 
The proofs of all theorems and auxiliary lemmas can be found in Section~\ref{sec:proofs}.

\subsection{Related work}

This work connects to several lines of research that are described below.

\subsubsection*{Total positivity.}
The Monge property is closely related to the notion of \emph{total positivity}~\cite{KarRin80a}. An entrywise positive matrix $\theta \in \R^{\dimone \times \dimtwo}$ is called totally positive (of order $2$), if 
$$
\theta_{i, j}  \theta_{k, \ell} \ge \theta_{i, \ell}  \theta_{k, j} , \quad \text{ for all } 1 \le i \le k \le \dimone, \, 1 \le j \le \ell \le \dimtwo .
$$
Therefore, an entrywise positive matrix $\theta$ is totally positive if and only if $\log (\theta)$ is anti-Monge, where $\log (\cdot)$ is applied to each entry of $\theta$ individually. 
As a result, total positivity is also known as  \emph{log-supermodularity}. 
Total positivity plays an essential role in statistical physics via the FKG inequality~\cite{ForKasGin71} and appears frequently in many other areas of probability and statistics~\cite{KarRin80a, KarRin80b}.
More recently, there have been new developments in studying totally positive distributions and related estimation problems~\cite{Faletal17, LauUhlZwi17, RobStuTraUhl18}. In a companion paper~\cite{HutMaoRigRob19b}, we study minimax estimation of a totally positive distribution by employing mathematical tools that are closely related to those in the current paper. 

\subsubsection*{Latent permutation learning.}
Estimating a pre-Monge matrix from its noisy version falls into the category of matrix learning with latent permutations, which has recently observed a surge of interest. Models involving latent permutations include noisy sorting~\cite{BraMos08}, the strong stochastic transitivity model~\cite{Cha15, ShaBalGunWai17}, feature matching~\cite{ColDal16}, crowd labeling~\cite{ShaBalWai16a}, statistical seriation~\cite{FlaMaoRig19} and graph matching~\cite{LivRiz13, DinMaWuXu18}, to name a few. 
Many of the previous approaches for learning latent permutations under such models are based on sorting row or column sums of the observed matrix (or equivalently, degrees of vertices)~\cite{ShaBalWai16b, ChaMuk19, PanMaoMut17} or certain refinements~\cite{MaoWeeRig18, MaoPanWai18a}. However, since adding a constant to all entries in a row or column of a Monge matrix does not change its Monge property, first-order information such as row sums is uninformative for the Monge structure, and thus cannot be used to identify the latent permutation. Instead, we propose a new algorithm based on variance sorting. We show in Section~\ref{sec:v-sort} that this novel use of second order information is decisive when estimating pre-Monge matrices.

\subsubsection*{Graphon estimation.}
Another related, substantial body of literature is that on graphon estimation~\cite{WolOlh13, ChaAir14, GaoLuZho15, BorChaCoh15}, where the goal is to estimate a bivariate function $f : [0, 1]^2 \to \R$ from samples $\{ f (X_i, Y_j) : 1\le i \le \dimone, \, 1 \le j \le \dimtwo \}$. 
Unlike regression, the design points $(X_i, Y_j)$ are not observed in graphon estimation, so the observations can be viewed as an $\dimone \times \dimtwo$ matrix with latent permutations acting on its rows and columns. 
There have been extensive studies on graphon estimation with various structures, including block models~\cite{AirCosCha13}, smoothness~\cite{KloTsyVer17} and low-rank structure~\cite{ShaLee18}. While our setup is not about recovering an underlying function $f$, the current work can be viewed as a study of denoising observations in graphon estimation with the Monge structure.

\subsubsection*{Shape-constrained estimation.}
Estimation of a Monge matrix, which we study in Section~\ref{sec:monge}, falls in the scope of shape-constrained estimation. Closest to the present work is the estimation of a bivariate isotonic matrix under Gaussian noise~\cite{ChaGunSen18}. In fact, every anti-Monge matrix can be written as the sum of a rank-two matrix and a bivariate isotonic matrix (Lemma~\ref{lem:c}). However, our results suggest that the set of Monge matrices is in fact qualitatively different from the set of bivariate isotonic matrices. Particularly, the minimax rate of estimation in Theorem~\ref{thm:upper_bound} is different from that given by Theorem~2.1 of~\cite{ChaGunSen18}, and the low-rank approximation rate in Proposition~\ref{prop:low-rank-approx} is different from that given by Lemma~4 of~\cite{ShaBalGunWai17}. 

Shortly before completing the current work, we became aware of a concurrent work by Fang, Guntuboyina and Sen~\cite{FanGunSen19} that studies multivariate extensions of isotonic regression. The two-dimensional version almost coincides with the anti-Monge structure (without permutations) that we study, and the rate achieved by the least-squares estimator specialized to dimension two, as expected, coincides with the main term of the rate given by Theorem~\ref{thm:upper_bound} in our current paper. However, it is worth noting that the two proofs follow drastically different paths. 
While the proof in~\cite{FanGunSen19} relies on metric entropy estimates from~\cite{BleGaoLi07, Gao13}, our proof is based on spectral decomposition of the difference operator $D$ defined in~\eqref{eq:def-d}, a technique which has been used for example to study the performance of total variation regularization~\cite{WanShaSmo15a, HutRig16}.
Moreover, assuming \( n = \dimone = \dimtwo \), our upper bound given in Theorem~\ref{thm:upper_bound} contains a log factor of order \( \log(n) \), while the one in Theorem 4.1 of~\cite{FanGunSen19} potentially scales like \( \log(n)^3 \), a minor improvement which nonetheless shows the potential merits of our proof technique.


\medskip
\noindent
\textbf{Notation.}
For a positive integer $n$, let $[n] = \{1, 2, \ldots, n\}$. For a
finite set $S$, we use $|S|$ to denote its cardinality. For two
sequences $\{a_n\}_{n=1}^\infty$ and $\{b_n\}_{n=1}^\infty$ of real numbers, we write
$a_n \lesssim b_n$ if there is a universal constant $C$ such that $a_n
\leq C b_n$ for all $n \geq 1$. The relation $a_n \gtrsim b_n$ is
defined analogously.  We use $c$ and $C$ (possibly with subscripts) to denote
universal constants that may change from line to line. Let $\land$ and $\lor$ denote the min and the max operators between two real numbers respectively.
Given a matrix $M \in \R^{\dimone \times \dimtwo}$, we denote its $i$-th row by $M_{i, \cdot}$ and its $j$-th column by $M_{\cdot,j}$.
We denote by $\|M\|_F $ and $ \| M \|$ the Frobenius norm and the operator norm of \( M \), and by \( \| M \|_1 \) and \( \| M \|_\infty \) the \( \ell^1 \) and \( \ell^\infty \)-norm of $M$ when viewed as a vector in \( \R^{\dimone \dimtwo} \), respectively. 
We write \( M^{\dag} \) for the Moore-Penrose pseudoinverse of \( M \).
Finally, let $\cS_n$ denote the set of permutations $\pi : [n] \to [n]$.

\section{Anti-Monge matrix estimation}
\label{sec:monge}

We start with estimation of a Monge matrix under sub-Gaussian noise, without latent permutations. 
It is mathematically equivalent to study estimation of an \emph{anti}-Monge matrix \( \theta^* \in \R^{\dimone \times \dimtwo} \), which we find more convenient for the presentation. 
Throughout this work, without loss of generality, we also assume that $\dimone \ge \dimtwo$. 

Consider the difference operator $\Done \in \R^{(\dimone-1)\times \dimone}$ defined by
\begin{equation}
\Done = \begin{bmatrix}
	-1 & 1 & 0 &  \dots & 0 & 0\\
	0 & -1 & 1 &  \dots & 0 & 0\\
	\vdots & \vdots & \vdots &  & \vdots & \vdots \\
	0 & 0 & 0 &  \dots & -1 & 1
\end{bmatrix} , \label{eq:def-d}
\end{equation}
and we define $\Dtwo \in \R^{(\dimtwo-1) \times \dimtwo}$ in the same way. 

Using a telescoping sum argument, it is easy to check that the set of anti-Monge matrices $\theta$ such that $-\theta$ satisfies~\eqref{EQ:defMonge} can be expressed as
\begin{align} \label{eq:Mdef}
\cM = \cM^{\dimone, \dimtwo} \defn \{\theta \in \R^{\dimone \times \dimtwo}: \Done \theta \Dtwo^\top \geq 0\} , 
\end{align}
where the symbol $\geq$ denotes entrywise inequality. 

%
For each $\theta \in \cM$, we define the quantity
\begin{align} 
V(\theta) \defn \theta_{1,1} + \theta_{\dimone, \dimtwo} - \theta_{\dimone, 1} - \theta_{1, \dimtwo}=\| \Done \theta \Dtwo^\top \|_1 \,,
\label{eq:def-v}
\end{align}
where the last equality follows from a telescoping sum. 
We remark that $V(\theta)$ is a global seminorm of $\theta$, and turns out to play a role in the rate of estimation. 

In this work, we consider additive sub-Gaussian noise. Namely, for a zero-mean random matrix $\eps \in \R^{\dimone \times \dimtwo}$, we say that $\eps$ is sub-Gaussian with variance proxy $\sigma^2$, or simply $\eps \sim \subG_{\dimone \times \dimtwo} (\sigma^2)$, if for any matrix $M \in \R^{\dimone \times \dimtwo}$, it holds that
$$
\E[\exp(\tr(M^\top \eps)] \le \exp(\sigma^2 \|M\|_F^2/2) .
$$
Suppose that we observe
\begin{align}
\label{eq:GSmodel}
y = \theta^* + \varepsilon ,
\end{align}
where $\eps \sim \subG_{\dimone \times \dimtwo}(\sigma^2)$. We study the performance of the least-squares estimator
\begin{align}
\label{eq:LSE}
\thetals \defn 
\argmin_{\theta \in \cM} \|\theta - y\|_F^2 ,
\end{align}
in terms of the mean squared error
\begin{align}
\label{eq:MSE}
\frac{1}{\dimone \dimtwo} \| \hat \theta - \theta^* \|_F^2  .
\end{align}
Our upper bound is stated in the following theorem.

\begin{theorem} \label{thm:upper_bound} 
Let \( \theta^* \in \cM^{\dimone, \dimtwo} \) be an anti-Monge matrix, and suppose that we observe $	y = \theta^* + \varepsilon$ where $\varepsilon \sim \subG_{\dimone \times \dimtwo}(\sigma^2).$
Then, the least-squares estimator $\thetals$ achieves the rate
\begin{align}
	\label{eq:ca}
	\frac{1}{\dimone \dimtwo} \| \thetals - \theta^* \|_F^2
	\lesssim \left[ \frac{\sigma^2 }{\dimtwo} + \left( \frac{ \sigma^2 V(\theta^*)}{ \dimone \dimtwo } \right)^{2/3} \log(\dimone)^{1/3} \log(\dimtwo)^{2/3} \right] \wedge \sigma^2
\end{align}
with probability at least $1 - \exp( - \dimone)$. 
Moreover, the same bound holds in expectation.
\end{theorem}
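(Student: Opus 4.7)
The starting point is the standard basic inequality for least squares over a closed convex set: since $\thetals$ is the Euclidean projection of $y$ onto $\cM$,
$$\|\thetals - \theta^*\|_F^2 \le 2\langle \varepsilon, \thetals - \theta^*\rangle.$$
I would follow Chatterjee's variational analysis of the LSE rate on a convex constraint set: bound the localized Gaussian complexity
$$g(t) \defn \rE \sup_{\theta \in \cM,\, \|\theta - \theta^*\|_F \le t} \langle \varepsilon, \theta - \theta^*\rangle$$
together with its high-probability version, and read off the rate from the fixed-point equation $g(t^*) \asymp (t^*)^2$. Integration of tails recovers the in-expectation bound from the high-probability one.

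The first step is to decompose $\R^{\dimone \times \dimtwo} = L \oplus L^\perp$, where $L$ is the null space of the second-difference map $\theta \mapsto \Done \theta \Dtwo^\top$, i.e., the $(\dimone + \dimtwo - 1)$-dimensional subspace of affine matrices of the form $\theta_{ij} = a_i + b_j$. Writing $\theta - \theta^* = \Delta_L + \Delta_\perp$ orthogonally, the affine contribution is handled by $|\langle \varepsilon, \Delta_L\rangle| \le \|P_L \varepsilon\|_F \|\Delta_L\|_F$, together with the standard sub-Gaussian $\chi^2$-tail bound $\|P_L \varepsilon\|_F^2 \lesssim \sigma^2 (\dimone + \dimtwo) \lesssim \sigma^2 \dimone$ with probability at least $1 - e^{-\dimone}$. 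After Cauchy--Schwarz and AM--GM, this produces the parametric term $\sigma^2/\dimtwo$ in the MSE.

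The non-affine part is the crux. Re-parameterize $\Delta_\perp$ via its second differences $W \defn \Done \Delta_\perp \Dtwo^\top \in \R^{(\dimone-1)\times(\dimtwo-1)}$, so that $\Delta_\perp = \Done^\dag W (\Dtwo^\dag)^\top$, and the anti-Monge constraint on $\thetals$ becomes $W + W^* \ge 0$ entrywise, with $W^* \defn \Done \theta^* \Dtwo^\top \ge 0$ and $\|W^*\|_1 = V(\theta^*)$ by the telescoping identity~\eqref{eq:def-v}. Using the closed-form singular value decomposition of $\Done$, with singular values $s_k = 2\sin(k\pi/(2\dimone))$, $k = 1,\dots,\dimone-1$, and discrete-cosine singular vectors $\{q_k\}$ (analogously $\{\tilde s_\ell\}, \{\tilde q_\ell\}$ for $\Dtwo$), one obtains $\langle \varepsilon, \Delta_\perp\rangle = \langle \Xi, W\rangle$ with $\Xi \defn (\Done^\dag)^\top \varepsilon \Dtwo^\dag$. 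Each entry $\Xi_{ij}$ is a sub-Gaussian linear combination with variance proxy $\sigma^2 \sum_{k,\ell} q_{k,i}^2 \tilde q_{\ell,j}^2 /(s_k \tilde s_\ell)^2$, and a careful evaluation of this sum followed by a union bound over the $(\dimone-1)(\dimtwo-1)$ entries yields a bound on $\|\Xi\|_\infty$ of order $\sigma$ times logarithmic factors in $\dimone, \dimtwo$. Combining H\"older's inequality $\langle \Xi, W\rangle \le \|\Xi\|_\infty \|W\|_1$ with a dyadic peeling over the levels $V(\theta) \in [2^{m-1}V(\theta^*), 2^m V(\theta^*)]$ (which yields $\|W\|_1 \lesssim 2^m V(\theta^*)$ on each peel) and balancing against the Frobenius localization $\|\Delta_\perp\|_F \le t$ produces the desired bound on $g(t)$ and, upon solving the fixed-point equation, the $(\sigma^2 V(\theta^*)/\dimone \dimtwo)^{2/3}$ rate.

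The main obstacle is the spectral estimate for $\|\Xi\|_\infty$: the weighted sum $\sum_{k,\ell} q_{k,i}^2 \tilde q_{\ell,j}^2/(s_k \tilde s_\ell)^2$ has contributions that blow up near the low-frequency end of the spectrum (since $s_1 \sim \pi/\dimone$), and tight control requires exploiting the oscillatory structure of the discrete-cosine vectors $q_k, \tilde q_\ell$ rather than a crude operator-norm bound on $\Done^\dag$. The log factors $\log(\dimone)^{1/3} \log(\dimtwo)^{2/3}$ in the final rate emerge from the combination of this variance-proxy estimate and the peeling union bound, while the $2/3$ exponent is the signature of the $\ell^1/\ell^\infty$ duality for $\langle \Xi, W\rangle$ balanced against the Frobenius localization through Chatterjee's fixed-point equation. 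The trivial upper bound $\sigma^2$ on the MSE follows immediately from $\|\thetals - \theta^*\|_F \le 2\|\varepsilon\|_F \lesssim \sigma \sqrt{\dimone \dimtwo}$, since Euclidean projection onto the convex set $\cM \ni \theta^*$ is $1$-Lipschitz.
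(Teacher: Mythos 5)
Your overall architecture matches the paper's: Chatterjee's localized complexity, the orthogonal split along $L = \ker(\theta \mapsto \Done \theta \Dtwo^\top)$, the parametric term from $\dim L = \dimone + \dimtwo - 1$, and the reparameterization $\langle \varepsilon, \Delta_\perp\rangle = \langle \Xi, W\rangle$ with $\Xi = (\Done^\dag)^\top \varepsilon \Dtwo^\dag$. But there is a genuine gap at the crux of the argument. Your proposal asserts that $\|\Xi\|_\infty$ is of order $\sigma$ times polylog. This is false: the variance proxy of $\Xi_{i,j}$ is $\sigma^2 \sum_{k,\ell} q_{k,i}^2 \tilde q_{\ell,j}^2/(s_k\tilde s_\ell)^2$, and a direct computation (the paper's \eqref{eq:t5}) shows this scales like $\sigma^2\, i j \log(\dimtwo)$ (with $i$ replaced by $\dimone - i$ and $j$ by $\dimtwo - j$ as appropriate). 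For central indices $i \asymp \dimone/2,\ j \asymp \dimtwo/2$, this is $\Theta(\sigma^2 \dimone\dimtwo\log\dimtwo)$, not polylogarithmic. No union bound can rescue a polylog $\|\Xi\|_\infty$ bound from this. Consequently, the plain H\"older step $\langle \Xi, W\rangle \le \|\Xi\|_\infty \|W\|_1$ with $\|W\|_1 \lesssim V(\theta^*)$ cannot deliver the $(\sigma^2 V/(\dimone\dimtwo))^{2/3}$ rate through the fixed-point equation.

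The paper's actual proof needs three further ingredients precisely to overcome this: \textbf{(i)} a second spectral truncation $\Pi_2$ that removes the roughly $k\log\dimtwo$ lowest-frequency modes of $\cD$, capping the worst-case entry variance of the remaining part of $\Xi$ by $\sigma^2(\dimone\dimtwo/k)\log\dimtwo$ (Lemma~\ref{lem:var-bd}); \textbf{(ii)} a position-dependent weight matrix $\Phi$ with $\Phi_{i,j} \asymp \sqrt{\log\dimone\log\dimtwo}\bigl[\sqrt{\min(i,\dimone-i)\min(j,\dimtwo-j)}\wedge\sqrt{\dimone\dimtwo/k}\bigr]$, so that H\"older is applied in the weighted form $\langle\Xi, W\rangle \le \|\Xi\oslash\Phi\|_\infty\,\|\Phi\odot W\|_1$ with $\|\Xi\oslash\Phi\|_\infty = O(1)$ in expectation; and \textbf{(iii)} a split of $\|\Phi\odot W\|_1 = \langle\Phi, D\theta\Dtwo^\top\rangle = \langle\Phi, D(\theta-\theta^*)\Dtwo^\top\rangle + \langle\Phi, D\theta^*\Dtwo^\top\rangle$, where the first term is bounded via Cauchy--Schwarz by $\|\Done^\top\Phi\Dtwo\|_F\,t$ using the estimate $\|\Done^\top\Phi\Dtwo\|_F^2 \lesssim \log\dimone\log\dimtwo\sqrt{\dimone\dimtwo/k} + \log^2\dimone\log^2\dimtwo$ (Lemma~\ref{lem:phi-fro}), and only the second term is bounded by $\|\Phi\|_\infty V(\theta^*)$. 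Balancing these pieces in $t$ and then optimizing over $k$ is what produces the $2/3$ exponent. Your ``dyadic peeling over $V(\theta)$'' is not a substitute for this mechanism: even granting a polylog $\|\Xi\|_\infty$, that scheme gives a bound on $g(t)$ that is flat in $t$, and the resulting fixed point yields the rate $\sigma V/\sqrt{\dimone\dimtwo}$ (up to logs), not $(\sigma^2 V/(\dimone\dimtwo))^{2/3}$.
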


Assuming Gaussian noise, the following theorem provides a lower bound that matches the above upper bound up to a logarithmic factor. For $V_0 \ge 0$, let us define
$$
\cM_{V_0} = \cM_{V_0}^{\dimone, \dimtwo} \defn \{ \theta \in \cM^{\dimone, \dimtwo} : V(\theta) \le V_0 \}. 
$$

\begin{theorem}\label{thm:lower_bound} 
Consider the model
$
y = \theta^* + \varepsilon,
$
where $\theta^* \in \cM_{V_0}^{\dimone, \dimtwo}$ and $\varepsilon$ has i.i.d. $\cN(0, \sigma^2)$ entries.
For any $V_0 \ge 0$, 
it holds that
$$\underset{\tilde\theta}{\inf}\underset{\substack{\theta^* \in\cM_{V_0}}}{\sup} \E \Big[ \frac{1}{\dimone \dimtwo}  \| \tilde\theta - \theta^* \|_F^2 \Big] \gtrsim \Big[ \frac{\sigma^2}{\dimtwo} + \left( \frac{\sigma^2 V_0}{\dimone\dimtwo} \right)^{2/3} \Big] \land \sigma^2,$$
where the infimum is taken over all estimators measurable with respect to the observation $y$.
\end{theorem}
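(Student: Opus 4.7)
}
The plan is to combine two Fano-type packing arguments within $\cM_{V_0}$: the first furnishes the parametric rate $\sigma^2/\dimtwo$, and the second the nonparametric rate $(\sigma^2 V_0/(\dimone\dimtwo))^{2/3}$. Summing the two produces the sum appearing in the statement, while the cap $\wedge\,\sigma^2$ follows by monotonicity of the minimax risk in $V_0$ together with the observation that the nonparametric construction can be run at the saturation threshold.

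For the parametric piece, I restrict to the $\dimone$-dimensional subfamily of row-constant matrices $\theta^a_{i,j}=a_i$, $a\in\R^{\dimone}$, which satisfy $V(\theta^a)=0$ and hence lie in $\cM_{V_0}$ for every $V_0\geq 0$. Taking a Varshamov--Gilbert code $\Omega\subset\{0,1\}^{\dimone}$ of size $2^{\dimone/8}$ with pairwise Hamming distance $\geq\dimone/4$, the hypothesis class $\{\theta^{\eps\omega}\}_{\omega\in\Omega}$ has pairwise Frobenius distances of order $\dimone\dimtwo\eps^2$ and Gaussian KL divergences of order $\dimone\dimtwo\eps^2/\sigma^2$. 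Balancing these against $\log|\Omega|\asymp\dimone$ selects $\eps\asymp\sigma/\sqrt{\dimtwo}$, and Fano's inequality delivers the MSE lower bound $\sigma^2/\dimtwo$.

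For the nonparametric piece, I restrict to the product-form subfamily $\theta^f_{i,j}=i\cdot f(j)$ with $f:[\dimtwo]\to\R$ nondecreasing; a short calculation confirms that $\theta^f$ is anti-Monge and that $V(\theta^f)=(\dimone-1)(f(\dimtwo)-f(1))$. For an integer $k\leq\dimtwo$, I partition $[\dimtwo]$ into $k$ consecutive blocks of equal length and, for each $\omega\in\{0,1\}^k$, define the block-constant monotone function $f_\omega$ taking value $\ell\delta+\delta\omega_\ell$ on the $\ell$-th block. Choosing $\delta\asymp V_0/(\dimone k)$ ensures $V(\theta^{f_\omega})\leq V_0$, and a Varshamov--Gilbert packing of $\{0,1\}^k$ produces $2^{k/8}$ matrices with pairwise Frobenius squared distance of order $(\sum_{i=1}^{\dimone}i^2)\cdot(\dimtwo/k)\cdot\delta^2\cdot d_H(\omega,\omega')\asymp\dimone^3\dimtwo\delta^2$. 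Equating the Gaussian KL budget $\asymp\dimone^3\dimtwo\delta^2/\sigma^2$ to the entropy $\log|\Omega|\asymp k$ forces the choice $k\asymp(\dimone\dimtwo V_0^2/\sigma^2)^{1/3}$, and Fano's inequality then yields the MSE lower bound $V_0^2/k^2\asymp(\sigma^2 V_0/(\dimone\dimtwo))^{2/3}$.

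The main obstacle is the cap at $\sigma^2$: the product construction requires $k\leq\dimtwo$, equivalently $V_0\lesssim\sigma\dimtwo/\sqrt{\dimone}$, while the rate $(\sigma^2 V_0/(\dimone\dimtwo))^{2/3}$ only reaches $\sigma^2$ at the larger threshold $V_0\asymp\sigma\dimone\dimtwo$. To bridge the intermediate regime, I would use a two-dimensional block variant: let $\theta$ be constant on a $k_1\times k_2$ grid of subrectangles with values drawn from a binary anti-Monge template, which corresponds to a sublattice indicator of $[\dimone]\times[\dimtwo]$; such sublattices provide the combinatorial freedom needed to extend the packing into this range. Finally, since $\cM_{V_0}$ is increasing in $V_0$, so is the minimax risk, and invoking the nonparametric bound at $V_0'\asymp\sigma\dimone\dimtwo$ gives $\gtrsim\sigma^2$ for all $V_0\geq V_0'$, which produces the $\wedge\,\sigma^2$ cap and completes the proof.
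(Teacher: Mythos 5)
Your overall strategy --- hypothesis-testing lower bounds via a packing inside $\cM_{V_0}$, combining a parametric and a nonparametric piece and capping at $\sigma^2$ --- matches the paper's, and your Fano plus Varshamov--Gilbert treatment of the row-constant family for the $\sigma^2/\dimtwo$ term is fine (the paper simply asserts this parametric rate, but the content is the same). The genuine gap is in the nonparametric piece.

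Your one-dimensional construction $\theta^f_{i,j} = i\cdot f(j)$ can use at most $k \le \dimtwo$ blocks, which, as you note, confines the argument to $V_0 \lesssim \sigma\dimtwo/\sqrt{\dimone}$. But the rate $(\sigma^2 V_0/(\dimone\dimtwo))^{2/3}$ must be established below the cap for all $V_0$ up to order $\sigma\dimone\dimtwo$, so a large intermediate regime is left uncovered. Your proposed bridge, a two-dimensional block construction with binary values taken from a sublattice indicator of $[\dimone]\times[\dimtwo]$, is only sketched and is not obviously workable: sublattice indicators of a product of two chains form a quite rigid family, and it is not clear that they produce exponentially many well-separated anti-Monge hypotheses with $V$ controlled at the right scale. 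Moreover, the monotonicity-in-$V_0$ argument only helps once the bound is proved at $V_0 \asymp \sigma\dimone\dimtwo$, which your one-dimensional construction cannot reach, so it cannot close the gap on its own.

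The paper's construction shows what is missing. On a $k_1\times k_2$ grid of blocks with $k_1\in[\dimone]$, $k_2\in[\dimtwo]$, it sets the value on block $(u,v)$ to $V_0\big(\tfrac{uv}{2k_1k_2} + \tfrac{\tau_{u,v}}{8k_1k_2}\big)$ for $\tau\in\{-1,1\}^{k_1\times k_2}$. The monotone product trend $uv/(2k_1k_2)$ has block-level second differences exactly $1/(2k_1k_2)$, which dominate the perturbation's second differences (at most $4/(8k_1k_2)$ in magnitude), so \emph{every} $\tau$ yields a valid anti-Monge matrix with $V(\theta^\tau)\le V_0$. This produces a full hypercube of hypotheses with Hamming-proportional Frobenius separation, to which the paper applies Assouad's lemma (the choice of Assouad versus Fano is inessential) with $k_1k_2 \asymp (V_0\sqrt{\dimone\dimtwo}/\sigma)^{2/3}$, a choice feasible for all $4\sigma/\sqrt{\dimone\dimtwo}\le V_0\le 4\sigma\dimone\dimtwo$. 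To complete your argument you would need to replace the sublattice sketch with a concrete two-dimensional perturbation-of-a-trend construction of this type.
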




\section{Pre-anti-Monge matrix estimation}
\label{sec:per}

In this section, we move on to study the estimation of a pre-anti-Monge matrix, that is, an anti-Monge matrix whose rows and columns have been shuffled by latent permutations. Let $\cS_n$ denote the set of permutations $\pi : [n] \to [n]$. For any matrix $\theta \in \R^{\dimone \times \dimtwo}$ and permutations $\pi_1 \in \cS_{\dimone}, \pi_2 \in \cS_{\dimtwo}$, recall that $\theta( \pi_1, \pi_2 )$ denotes the matrix defined by
$ 
\theta( \pi_1, \pi_2 )_{i, j} = \theta ( \pi_1(i), \pi_2(j) ) .
$
Define the sets
\begin{align}
\cM (\pi_1, \pi_2) \defn  \{ \theta( \pi_1, \pi_2 ) : \theta \in \cM \} 
\quad \text{ and } \quad  
\cM_{V_0} (\pi_1, \pi_2) \defn \{ \theta( \pi_1, \pi_2 ) : \theta \in \cM_{V_0} \}
\end{align}
of anti-Monge matrices shuffled by fixed permutations.

Suppose that we observe
\begin{align}
y = \theta^* (\pi_1^*, \pi_2^*) + \eps ,
\label{eq:model-per}
\end{align}
where $(\pi_1^*, \pi_2^*, \theta^*) \in \cS_{\dimone} \times \cS_{\dimtwo} \times \cM$ and $\eps \sim \subG_{\dimone \times \dimtwo} (\sigma^2 )$. Our goal is to estimate the pre-anti-Monge matrix $\theta^* (\pi_1^*, \pi_2^*)$. 

If two rows (or columns) of $\theta^*$ differ by a constant vector, then the matrix we obtain from switching these two rows is still anti-Monge. 
Therefore, even if the noise $\eps$ is zero, neither the pair of permutations $(\pi_1^*, \pi_2^*)$ nor the matrix $\theta^*$ can be inferred from \( y \).
As a result, measures of permutation and estimation errors such as $\| \theta^* (\hat \pi_1, \hat \pi_2 ) - \theta^* (  \pi_1^*, \pi_2^* ) \|_F$ and 
$\| \hat \theta - \theta^* \|_F,$
may be not be pertinent. This is why, instead of studying identifiability of the permutations and the anti-Monge matrix, we focus on the denoising error
\begin{equation}
	\label{eq:js}
	\| \tilde \theta - \theta^* (  \pi_1^*, \pi_2^* ) \|_F
\end{equation}
for any estimator $\tilde \theta$ of the pre-anti-Monge matrix. 

Depending on the application, it might be important to differentiate between \emph{proper} and \emph{improper} estimators \( \tilde \theta \).
In this context, a proper estimator is an estimator 
\begin{equation}
	\label{eq:jr}
	\tilde \theta \in \bar \cM \defn \bigcup_{\pi_1 \in \cS_{\dimone}, \, \pi_2 \in \cS_{\dimtwo}} \cM(\pi_1, \pi_2) ,
\end{equation}
that is, an estimator that needs to be a pre-anti-Monge matrix itself. 
By contrast, an improper estimator can be any matrix \( \tilde \theta \in \R^{n_1 \times n_2} \).

The rest of this section is organized as follows. We first establish the minimax rate for estimating a  pre-anti-Monge matrix in Section \ref{sec:minimax}. It is achieved by the global least-squares estimator, which is proper by nature, but is likely to be computationally infeasible.
Next, we give a computationally feasible proper estimator in Section \ref{sec:v-sort} under additional assumptions.
Finally, in Section \ref{sec:svt}, we present another computationally feasible estimator based on singular value thresholding that yields a better rate than the one in Section \ref{sec:v-sort}, but may be improper. This presents a shortcoming if one wants to leverage the Monge structure for downstream numerical computations.

\subsection{Minimax rates of estimation}
\label{sec:minimax}

We work under the technical assumption that $\theta^* \in \cM_{V_0}$ where $V_0$ is known. 
Define
$$
\bar \cM_{V_0} \defn \bar \cM \cap \{\theta \in \R^
{n_1 \times n_2}\,:\, V(\theta) \le V_0\}\,.
$$
Our upper bound is achieved (up to a logarithmic factor) by the global least-squares estimator over the entire parameter space
\begin{align}
\label{eq:least-sq}
\thetagls  \in \argmin_{ \theta \in\bar \cM_{V_0} } \| \theta - y\|_F^2 .
\end{align}
If the minimizer is not unique, an arbitrary one is chosen. 

\begin{theorem} \label{thm:minimax}
Suppose that we have $y = \theta^*( \pi_1^*, \pi_2^*) + \eps$, where $\theta^* \in \cM_{V_0}^{\dimone, \dimtwo}$ and $\eps \sim \subG(\sigma^2)$. 
Then the global least-squares estimator~\eqref{eq:least-sq} achieves the rate
\begin{align}
\frac 1{\dimone \dimtwo} \| \thetagls - \theta^* (  \pi_1^*, \pi_2^* ) \|_F^2 
\lesssim \left[ \frac{\sigma^2 \log(\dimone) }{\dimtwo} + \left( \frac{ \sigma^2 V_0}{ \dimone \dimtwo } \right)^{2/3}  \log(\dimone)^{1/3} \log(\dimtwo)^{2/3} \right] \wedge \sigma^2 \notag 
\end{align}
with probability at least $1 - \dimone^{-\dimone}$.
Moreover, the same bound holds in expectation.
\end{theorem}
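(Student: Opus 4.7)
The plan is to carry out the standard chaining and peeling argument for shape-constrained least squares, paying particular attention to the union bound over the latent permutations. Since $\theta^*(\pi_1^*,\pi_2^*)\in\bar\cM_{V_0}$ is feasible in \eqref{eq:least-sq}, the minimizer $\thetagls$ satisfies the basic inequality
\[
\|\thetagls-\theta^*(\pi_1^*,\pi_2^*)\|_F^2 \le 2\langle \varepsilon,\thetagls-\theta^*(\pi_1^*,\pi_2^*)\rangle ,
\]
so it suffices to control the localized process $\Psi(t)\defn\sup\{\langle\varepsilon,\theta-\theta^*(\pi_1^*,\pi_2^*)\rangle:\theta\in\bar\cM_{V_0},\ \|\theta-\theta^*(\pi_1^*,\pi_2^*)\|_F\le t\}$ along a geometric grid in $t$. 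A standard peeling argument then gives $\|\thetagls-\theta^*(\pi_1^*,\pi_2^*)\|_F^2\lesssim t_\star^2$, where $t_\star^2\asymp g(t_\star)$ for any high-probability envelope $\Psi(t)\le g(t)$.

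The key step is to exploit the decomposition $\bar\cM_{V_0}=\bigcup_{(\pi_1,\pi_2)\in\cS_{\dimone}\times\cS_{\dimtwo}}\cM_{V_0}(\pi_1,\pi_2)$ into $\dimone!\,\dimtwo!$ isometric copies of $\cM_{V_0}$. For each fixed pair $(\pi_1,\pi_2)$ the corresponding localized sup $\Psi_{\pi_1,\pi_2}(t)$ is exactly the object analyzed in the proof of Theorem~\ref{thm:upper_bound}, and hence admits a single-permutation envelope $g_0(t)$ whose fixed point reproduces the rate $\sigma^2/\dimtwo+(\sigma^2V_0/(\dimone\dimtwo))^{2/3}\log(\dimone)^{1/3}\log(\dimtwo)^{2/3}$. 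To combine these envelopes across permutation pairs, one strengthens the deviation probability of the single-permutation bound from $1-e^{-c\dimone}$ to $1-e^{-C\dimone\log\dimone}$ by invoking Gaussian (respectively sub-Gaussian Borell-type) concentration of the $(\sigma t)$-Lipschitz functional $\varepsilon\mapsto\Psi_{\pi_1,\pi_2}(t)$: with probability $1-e^{-C\dimone\log\dimone}$, the deviation of $\Psi_{\pi_1,\pi_2}(t)$ from its mean is at most $C\sigma t\sqrt{\dimone\log\dimone}$. Since $|\cS_{\dimone}\times\cS_{\dimtwo}|\le e^{C\dimone\log\dimone}$ (using $\dimone\ge\dimtwo$), a union bound over all permutation pairs then yields, with probability $1-\dimone^{-\dimone}$,
\[
\Psi(t)\lesssim \sigma t\sqrt{\dimone\log\dimone}+g_0(t).
\]

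Substituting into $t^2\asymp\Psi(t)$ produces the two summands of the theorem: the linear piece $\sigma t\sqrt{\dimone\log\dimone}$ contributes $t^2\asymp\sigma^2\dimone\log\dimone$, hence after normalization by $\dimone\dimtwo$ the term $\sigma^2\log(\dimone)/\dimtwo$; the $V_0$-dependent piece $g_0(t)$ contributes exactly the second summand inherited from Theorem~\ref{thm:upper_bound}. The trivial cap $\sigma^2$ follows from the inequality $\|\thetagls-y\|_F\le\|\theta^*(\pi_1^*,\pi_2^*)-y\|_F=\|\varepsilon\|_F$ together with a triangle-inequality bound $\|\thetagls-\theta^*(\pi_1^*,\pi_2^*)\|_F^2\le 4\|\varepsilon\|_F^2\lesssim\sigma^2\dimone\dimtwo$ on a high-probability event controlling $\|\varepsilon\|_F$. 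The in-expectation version is obtained by integrating the tail.

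The main obstacle is the union-bound step: the native deviation probability $e^{-c\dimone}$ coming out of Theorem~\ref{thm:upper_bound} is too weak to absorb the $\log(\dimone!\,\dimtwo!)\asymp\dimone\log\dimone$ entropy of the permutation component, so the chaining estimate must be recast in a form that cleanly isolates a diameter-dependent linear term (which then inflates by $\sqrt{\log\dimone}$ through concentration) from the $V_0$-dependent complexity (whose logarithmic factors must be preserved verbatim from the unpermuted case). A secondary subtlety is verifying that the extra $\log\dimone$ factor attaches only to the $V_0$-independent summand, so that the second summand of the present theorem matches that of Theorem~\ref{thm:upper_bound} without picking up additional logarithms.
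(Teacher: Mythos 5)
Your overall strategy mirrors the paper's: decompose $\bar\cM_{V_0}$ into the $\dimone!\,\dimtwo!$ permutation copies, bound the localized empirical process on each copy, upgrade the deviation probability of each single-copy bound to absorb the $\log(\dimone!\,\dimtwo!)\asymp\dimone\log\dimone$ permutation entropy, then balance. The paper uses Chatterjee's variational formula (Lemma~\ref{lem:variational}) rather than a basic-inequality-plus-peeling argument, and uses the generic chaining tail bound (Theorem~\ref{thm:tail}) with $s\asymp\sqrt{\dimone\log\dimone}$ rather than a direct appeal to Borell-type concentration of a Lipschitz functional, but these are interchangeable implementations of the same idea.

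There is, however, a genuine gap in your central reduction. You claim that for each fixed $(\pi_1,\pi_2)$ the localized supremum $\Psi_{\pi_1,\pi_2}(t)$ ``is exactly the object analyzed in the proof of Theorem~\ref{thm:upper_bound}.'' It is not. After the natural change of variables $\theta=\theta'(\pi_1,\pi_2)$ and using distributional invariance of $\varepsilon$ under permutation, that supremum becomes $\sup_{\theta'\in\cM_{V_0},\ \|\theta'-\bar\theta^\ast\|_F\le t}\langle\varepsilon,\theta'-\bar\theta^\ast\rangle$ with center $\bar\theta^\ast=\theta^*(\pi_1^*\pi_1^{-1},\pi_2^*\pi_2^{-1})$, which is \emph{not} an anti-Monge matrix unless $(\pi_1,\pi_2)=(\pi_1^*,\pi_2^*)$. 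The proof of Theorem~\ref{thm:upper_bound} (Proposition~\ref{prop:sup-id}) relies on the center being in $\cM$: the key estimate \eqref{eq:cl}--\eqref{eq:cm} bounds $\langle\Phi,D\theta^*\Dtwo^\top\rangle$ by $\|\Phi\|_\infty V(\theta^*)$, using $D\theta^*\Dtwo^\top\ge 0$, and the constraint is merely $\theta\in\cM$ (no bound on $V(\theta)$). That argument does not transfer when the center is arbitrary. The paper therefore proves a \emph{separate} estimate, Proposition~\ref{prop:sup-per}, which allows an arbitrary center $\theta^*\in\R^{\dimone\times\dimtwo}$ but restricts $\theta\in\cM_{V_0}$: there one bounds $\|\Phi\odot D\theta\Dtwo^\top\|_1\le\|\Phi\|_\infty\|D\theta\Dtwo^\top\|_1\le\|\Phi\|_\infty V_0$ directly, without the Cauchy--Schwarz split of \eqref{eq:cn} that appears in Proposition~\ref{prop:sup-id}. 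This produces an envelope of the same order (in fact, without the $(\dimone\dimtwo/k)^{1/4}$ cross term), so your final rate is correct, but as stated your reduction to Theorem~\ref{thm:upper_bound} skips the step that is actually the technical content of the proof. To repair the argument you would need to prove, or invoke, exactly Proposition~\ref{prop:sup-per}: a localized sup bound for $\theta\in\cM_{V_0}$ with an arbitrary reference point.
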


Note that this rate is the same (up to a logarithmic factor in the first term) as that for estimating an anti-Monge matrix without latent permutations in view of Theorem~\ref{thm:upper_bound}. Therefore, the lower bound of Theorem~\ref{thm:lower_bound} for the smaller class implies minimax optimality of the above upper bound (up to a logarithmic factor).

We conjecture that a similar bound holds true for a version of the least-squares estimator where the projection onto \( \bar \cM_{V_0}  \) is replaced by the unrestricted version \(\bar \cM \), but our current proof technique does not allow us to conclude this.

\subsection{Efficient estimation via variance sorting}  
\label{sec:v-sort}

While the global least-squares estimator retains the minimax rate even in the presence of latent permutations, solving the optimization problem~\eqref{eq:least-sq} is unlikely to be computationally efficient. Thus we now discuss polynomial-time estimators. 
In this subsection, we assume that the noise matrix $\eps$ is homoscedastic with independent sub-Gaussian entries, i.e.,
\begin{align}
\eps_{i, j} \sim \subG( C \sigma^2 )
\quad \text{ and } \quad
\Var [ \eps_{i, j} ] = \sigma^2 .
\label{eq:iso}
\end{align}

As in the previous section, the estimator is based on projecting a permuted version of the observations onto \( \cM_{V_0} \), but we use an efficient method to find estimators of the permutations with respect to which we project on.
Let us first focus on estimating the row permutation $\pi_1$. 
Since adding a constant to all entries in a row of the underlying matrix does not change its anti-Monge property, there is no first-order information that helps distinguish between the rows of $y$.
Instead, we exploit second-order information, namely, the variance of row differences of~$y$.

The intuition behind the following algorithm is that if we knew the index \( \pi_1^{-1}(1) \) corresponding to the first row of $\theta^*$, the anti-Monge property would imply that the variances between any other row \( i \in [\dimone] \) and row \( 1 \) in the unpermuted matrix \( \theta^\ast \),
\begin{equation}
	\label{eq:jt}
	\sum_{k = 1}^{\dimtwo} \left[ \theta^\ast_{i, k} - \theta^\ast_{1,k} - \frac{1}{\dimtwo} \sum_{\ell = 1}^{\dimtwo} (\theta^\ast_{i, \ell} - \theta^\ast_{1, \ell}) \right]^2,
\end{equation}
are monotonically increasing in \( i \).
Hence, given \( \pi_1(1) \), we could estimate these variances and sort the rows accordingly.
The precise method is given in the following {\sf Variance Sorting} Subroutine. 


\begin{algorithm}[H]
\caption{{\sf Variance Sorting}}
\label{alg:variance-sorting-sub}

\begin{enumerate}
\item
For each pair of rows $(i, j)$ of $y$, compute the variance of their difference
\begin{align}
\xi{(i, j)} = \sum_{k=1}^{\dimtwo} \Big[ y_{i, k} - y_{j, k} - \frac{1}{\dimtwo} \sum_{\ell=1}^{\dimtwo} (y_{i, \ell} - y_{j, \ell}) \Big]^2 ,  
\label{eq:xi-def}
\end{align}
and define 
\begin{align}
(i_0, j_0) = \argmax_{(i, j) \in [\dimone]^2, \, i < j} \, \xi (i , j) .
\label{eq:def-i0j0}
\end{align}

\item
Define $\hat \pi_1 \in \cS_{\dimone}$ so that $\{ \xi ( i_0, \hat \pi_1^{-1}(i) ) \}_{i=1}^{\dimone}$ is nondecreasing in $i$. In particular, we can pick $\hat \pi_1(1) = i_0$ and $\hat \pi_1(\dimone) = j_0$.
\end{enumerate}

\end{algorithm}


Note that in Algorithm \ref{alg:variance-sorting-sub}, the pair \( (i_0, j_0) \) is an estimator for the extremal rows \( \pi_1^{-1} (1) \) and \( \pi_1^{-1} (\dimone) \), but the choice of which index corresponds to \( \pi_1^{-1} (1) \) is broken arbitrarily by the constraint \( i_0 < j_0 \).
In turn, the resulting estimator \( \hat \pi_1 \) can only be reliable up to a global flip of the coordinates.
In order to obtain denoising rates, this indeterminacy can be overcome by projecting \( y \) onto the set of anti-Monge matrices under both possible orientations and picking the best fit.

To facilitate our presentation, we define the reversal permutation $\pir \in \cS_{\dimone}$ by $\pir (i) = \dimone - i + 1$ for $i \in [\dimone]$, and define similarly $\pirtwo \in \cS_{\dimtwo}$ by $\pirtwo (i) = \dimtwo - i + 1$ for $i \in [\dimtwo]$.
In short, Algorithm \ref{alg:variance-sorting-main} below applies the {\sf Variance Sorting} subroutine twice to estimate both row and column permutations, and then estimates $\theta$ by the (computationally efficient) least-squares estimator in the convex set of anti-Monge matrices along these estimated permutations.


\begin{algorithm}[H]
	\caption{{\sf Main Algorithm}}
	\label{alg:variance-sorting-main}
\begin{enumerate}
\item
	Find $\hat \pi_1$ using the {\sf Variance Sorting} subroutine, Algorithm~\ref{alg:variance-sorting-sub}.

\item
	With $y$ replaced by $y^\top$ and the roles of indices $1$ and $2$ switched, find $\hat \pi_2$ using the {\sf Variance Sorting} subroutine, Algorithm~\ref{alg:variance-sorting-sub}.

\item
Compute the least-squares estimator $\hat \theta$ as follows. If
$$
\min_{\theta \in \cM_{V_0}} \| \theta (\hat \pi_1, \hat \pi_2 ) - y \|_F^2 
\le \min_{\theta \in \cM_{V_0}} \| \theta (\pir \circ \hat \pi_1, \hat \pi_2 ) - y \|_F^2 ,
$$
then we define $\hat \pi_1' \defn \hat \pi_1$. Otherwise, we define $\hat \pi_1' \defn \pir \circ \hat \pi_1$. Finally, we set 
$$
\hat \theta 
\defn \argmin_{\theta \in \cM_{V_0}(\hat \pi_1', \hat \pi_2)} \| \theta - y \|_F^2 .
$$
\end{enumerate}
\end{algorithm}


Note that we only allowed a potential flip $\pir$ for $\hat \pi_1$, although there is also such an ambiguity for $\hat \pi_2$. This suffices because if $\theta \in \cM_{V_0}$, then $\theta( \pir, \pirtwo ) \in \cM_{V_0}$, and as a result 
\begin{align}
	&\cM_{V_0}(\hat \pi_1, \hat \pi_2) \cup \cM_{V_0} (\pir \circ \hat \pi_1, \hat \pi_2) \notag \\
	&= \cM_{V_0}(\hat \pi_1, \hat \pi_2) \cup \cM_{V_0} (\pir \circ \hat \pi_1, \hat \pi_2) \cup \cM_{V_0} (\hat \pi_1, \pirtwo \circ \hat \pi_2) \cup \cM_{V_0} (\pir \circ \hat \pi_1, \pirtwo \circ \hat \pi_2) . \notag
\end{align}

The estimator computed by the Main Algorithm achieves the following rate of estimation. 

\begin{theorem} \label{thm:per-proj}
Suppose that $y = \theta^*( \pi_1^*, \pi_2^*) + \eps$, where $\theta^* \in \cM_{V_0}^{\dimone, \dimtwo}$ and $\eps$ has independent $\subG( C \sigma^2 )$ entries with variance $\sigma^2 .$
Let the estimator $\hat \theta$ be given by the Main Algorithm. Then it holds with probability at least $1 - \dimone^{- \dimone}$ that 
\begin{align}
	\frac{1}{\dimone \dimtwo} \| \hat \theta - \theta^*(\pi^\ast_1, \pi^\ast_2) \|_F^2 
	\lesssim \big( \sigma^2 + \sigma V_0 \big)  \Big( \frac{ \log  \dimone}{ \dimtwo} \Big)^{1/2} . 
\end{align}
Moreover, the same bound holds in expectation.
\end{theorem}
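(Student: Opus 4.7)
The proof decomposes the error of $\hat\theta$ into a permutation-approximation error and a denoising error, handled separately by a basic inequality.

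\textbf{Step 1: Reduction via a basic inequality.} Let $M^* = \theta^*(\pi_1^*,\pi_2^*)$ and take as reference the ``permuted-back'' target $\tilde M = \theta^*(\hat\pi_1', \hat\pi_2)$, which lies in $\cM_{V_0}(\hat\pi_1',\hat\pi_2)$ by construction. Since $\hat\theta$ is the projection of $y$ onto the (closed convex) set $\cM_{V_0}(\hat\pi_1',\hat\pi_2)$, the standard optimality basic inequality gives
\begin{align}
\|\hat\theta - M^*\|_F^2 \le \|\tilde M - M^*\|_F^2 + 2\langle \varepsilon, \hat\theta - \tilde M\rangle.
\end{align}
The second term is the denoising/stochastic term and the first is the permutation-approximation term.

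\textbf{Step 2: Control the stochastic term.} Because $\hat\theta - \tilde M$ lies in the tangent cone of $\cM_{V_0}(\hat\pi_1',\hat\pi_2)$, translating by $\hat\pi_1',\hat\pi_2$ reduces it to the anti-Monge cone analyzed in Theorem~\ref{thm:upper_bound}. The chaining/spectral-decomposition bound developed there (for the difference operator $D$) can be repeated here, conditionally on $\hat\pi_1',\hat\pi_2$, since these depend only on $\varepsilon$ through moments whose concentration is controlled separately. This contributes an error of order $(\sigma V_0)^{2/3}(n_1 n_2)^{1/3}\log$-factors, which is dominated by the claimed rate after rescaling.

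\textbf{Step 3: Control the permutation-approximation term $\|\tilde M - M^*\|_F^2$.} This is the heart of the proof. Write $\xi^*(i,j) = \|P(M^*_{i,\cdot}-M^*_{j,\cdot})\|_2^2$ where $P$ is the orthogonal projection off the all-ones vector, so that $\xi(i,j) = \|P(M^*_{i,\cdot}-M^*_{j,\cdot}+\varepsilon_{i,\cdot}-\varepsilon_{j,\cdot})\|_2^2$.
\begin{enumerate}
\item[(a)] \emph{Uniform Hanson--Wright concentration.} For each pair $(i,j)$, expand $\xi(i,j) - \xi^*(i,j) = 2\langle P\Delta_{ij}, PE_{ij}\rangle + \|PE_{ij}\|_2^2$. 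The second summand has the same mean $2\sigma^2(n_2-1)$ for every pair and is concentrated by Hanson--Wright; the cross term is sub-Gaussian of variance $\lesssim \sigma^2 \|P\Delta_{ij}\|_2^2$. A union bound over the $O(n_1^2)$ pairs yields, with probability $\ge 1-n_1^{-n_1}$,
\begin{align}
|\xi(i,j) - \xi^*(i,j) - 2\sigma^2(n_2-1)| \lesssim \sigma\|P\Delta_{ij}\|_2\sqrt{\log n_1} + \sigma^2\sqrt{n_2\log n_1}, \quad \forall i,j.
\end{align}
\item[(b)] \emph{Identification of the extremal rows.} By the anti-Monge property, the function $i\mapsto \xi^*(\pi_1^{*-1}(1), \pi_1^{*-1}(i))$ is nondecreasing (applied to centered row differences), so $\xi^*$ is maximized by the pair $\{\pi_1^{*-1}(1), \pi_1^{*-1}(n_1)\}$. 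Using (a), the algorithmic maximizer $(i_0,j_0)$ either coincides with this pair (possibly after a flip, which is why we try $\pir$), or the gap $\xi^*_{\max} - \xi^*(i_0,j_0)$ is $O(\sigma V_0\sqrt{\log n_1}+\sigma^2\sqrt{n_2\log n_1})$. The flip choice in the Main Algorithm covers the orientation ambiguity.
\item[(c)] \emph{Row-wise misordering bound.} Conditionally on $(i_0,j_0)$ being near-extremal, any two rows $a,b$ reversed by $\hat\pi_1$ relative to $\pi_1^*$ must satisfy $|\xi^*(i_0,a)-\xi^*(i_0,b)|$ dominated by noise, by (a). The anti-Monge structure then implies that the corresponding rows $\theta^*_{\pi_1^*(a),\cdot}$ and $\theta^*_{\pi_1^*(b),\cdot}$ are close in centered $\ell^2$-norm. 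Summing the contributions $\|\theta^*_{\pi_1^*(a),\cdot}-\theta^*_{\pi_1^*(b),\cdot}\|_2^2$ over misordered pairs, and using $V(\theta^*)\le V_0$ to bound total row-wise variation, produces
\begin{align}
\|\theta^*(\hat\pi_1',\pi_2^*) - M^*\|_F^2 \lesssim (\sigma^2 + \sigma V_0)\, n_1\sqrt{n_2\log n_1}.
\end{align}
An identical argument applied to $y^\top$ controls the column permutation, and a triangle inequality combines them into the same bound on $\|\tilde M - M^*\|_F^2$.
\end{enumerate}

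\textbf{Step 4: Conclusion.} Plug the bounds of Steps 2 and 3 into the basic inequality of Step 1 and normalize by $n_1 n_2$. The Step 3 term dominates and yields the claimed rate $(\sigma^2+\sigma V_0)(\log n_1/n_2)^{1/2}$. The high-probability statement transfers to expectation by integrating the tail together with the trivial bound $\|\hat\theta-M^*\|_F^2 \le 4(\|y\|_F^2+\|M^*\|_F^2)$ on the exceptional event.

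\textbf{Main obstacle.} The delicate step is 3(c): translating pairwise closeness of $\xi^*$-values (a quadratic functional) into an aggregate Frobenius bound on the permuted matrix. One cannot afford a naive pairwise sum and must exploit the monotone structure of $i\mapsto \xi^*(\pi_1^{*-1}(1),\pi_1^{*-1}(i))$ granted by the anti-Monge property, together with the global variation budget $V_0$, to convert a uniform per-pair noise level into a telescoping-style bound whose total is $O(\sigma V_0 n_1 \sqrt{n_2\log n_1})$ rather than $O(\sigma V_0 n_1^2\sqrt{\log n_1})$. Getting the correct $n_2^{-1/2}$ scaling, as opposed to the $n_2^{-2/3}$ of Theorem~\ref{thm:upper_bound}, precisely reflects the price paid for estimating the permutations from second-order information.
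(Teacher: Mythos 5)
Your high-level decomposition mirrors the paper's: split $\|\hat\theta - M^*\|_F^2$ into a permutation-approximation term and a stochastic denoising term, control the former via concentration of the pairwise statistics $\xi(i,j)$ and the monotone structure of $f(i,j)=\|P(\theta^*_{i,\cdot}-\theta^*_{j,\cdot})\|_2^2$, and control the latter via a chaining/spectral argument. Steps 3(a) and 3(b) correspond to Lemma~\ref{lem:xi-prop} and the extremal-pair analysis in the paper's Proposition~\ref{prop:per-err}. Using a basic inequality with competitor $\tilde M=\theta^*(\hat\pi_1',\hat\pi_2)$ in place of the paper's Chatterjee variational argument is a legitimate presentational choice.

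There are, however, two genuine gaps. First, Step~2 as written does not work: you propose to bound $\langle\eps,\hat\theta-\tilde M\rangle$ ``conditionally on $\hat\pi_1',\hat\pi_2$'' by repeating the chaining bound of Theorem~\ref{thm:upper_bound}, but $\hat\pi_1'$ and $\hat\pi_2$ are \emph{functions of $\eps$}, so conditioning on them changes the law of $\eps$ and independence fails; the appeal to ``moments whose concentration is controlled separately'' does not circumvent this. The paper handles it by taking a supremum over \emph{all} tuples of permutations $(\pi_1,\pi_1',\pi_2,\pi_2')$ (see \eqref{eq:sup-2}), so that each term $\theta(\pi_1',\pi_2')-\tilde\theta^{\pi_1,\pi_2}$ is deterministic, and then pays for this with a union bound over $\cS_{\dimone}^2\times\cS_{\dimtwo}^2$ absorbed into $s\asymp\sqrt{\dimone\log\dimone}$ in the tail bound; this is where the extra $\log(\dimone)$ comes from. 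Your sketch does not account for this. Second, Step~3(c) is flagged as ``the heart of the proof'' and the ``main obstacle,'' but the argument that converts the uniform per-pair bound $f(a,b)\lesssim\tau$ on inverted pairs into an aggregate $\sum_i f(\hat\pi_1(i),i)\lesssim\dimone\tau$ (rather than $\dimone^2\tau$) is not actually given. The paper's resolution is the sorting lemma, Lemma~\ref{lem:per-sort}: under the superadditivity of $f$ (Lemma~\ref{lem:f-prop}), if every inverted pair has $f\le\tau$ then $f(\hat\pi_1(i),i)\le\tau$ for \emph{every} $i$, which yields the linear-in-$\dimone$ sum directly without any telescoping budget over $V_0$. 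You correctly identify that a naive pairwise sum is too lossy, but the concrete mechanism is missing. Finally, as a minor point, the probability $1-\dimone^{-\dimone}$ you claim in Step~3(a) from a union bound over $O(\dimone^2)$ pairs with a $\sqrt{\log\dimone}$-scale deviation is unattainable; the concentration step in the paper holds only with probability $1-\dimone^{-10}$.
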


This rate achieved by our efficient estimator is consistent, but it is suboptimal in view of the minimax rate given by Theorem~\ref{thm:minimax}.

\subsection{Denoising via singular value thresholding}
\label{sec:svt}

While the {\sf Variance Sorting} algorithm above yields efficient estimators of the latent permutations, the rate of convergence it achieves is suboptimal. We now aim for the easier task of denoising the pre-anti-Monge matrix without learning the latent permutations, in the hope of obtaining an efficient estimator with a faster rate of convergence.
More precisely, under model~\eqref{eq:model-per}, we look for a possibly improper estimator $\tilde \theta \in \R^{\dimone \times \dimtwo}$ so that $\big\| \tilde \theta - \theta^* ( \pi_1^*, \pi_2^* ) \big\|_F^2$ is small.

To this end, we consider the well-studied singular value thresholding (SVT) estimator~\cite{Cha15, GavDon14}. Let the singular value decomposition of $y$  be
$$
y = \sum_{i=1}^{\dimtwo} \lambda_i u_i v_i^\top .
$$
Then the SVT (hard-thresholding) estimator is defined as
\begin{align}
\thetasvt \defn \sum_{i=1}^{\dimtwo} \1 \{ \lambda_i > \rho \}  \lambda_i u_i v_i^\top ,
\label{eq:svt}
\end{align}
where we choose the threshold to be $\rho \defn C \sigma \sqrt{\dimone}$ for a sufficiently large constant $C > 0$. The rate of estimation achieved by the SVT estimator is given in the following theorem. 

\begin{theorem} \label{thm:svt}
Suppose that we have $y = \theta^*( \pi_1^*, \pi_2^*) + \eps$, where $\theta^* \in \cM^{\dimone, \dimtwo}$ and $\eps \sim \subG(\sigma^2)$.  The singular value thresholding estimator $\thetasvt$ achieves the rate 
$$
\frac{1}{\dimone \dimtwo} \| \thetasvt - \theta^* ( \pi_1^*, \pi_2^*) \|_F^2 \lesssim  \left[ \frac{\sigma^2}{\dimtwo} + \frac{ \sigma^{3/2} V(\theta^*)^{1/2} }{ \dimtwo^{3/4} } \right] \wedge \sigma^2
$$
with probability at least $1 - \exp(- \dimone)$.
Moreover, the same bound holds in expectation.
\end{theorem}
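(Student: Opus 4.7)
Set $M := \theta^*(\pi_1^*,\pi_2^*)$ and let $\mu_1 \ge \mu_2 \ge \cdots \ge \mu_{\dimtwo} \ge 0$ be its singular values. Since $\pi_1^*$ and $\pi_2^*$ act as orthogonal permutation matrices on the rows and columns of $\theta^*$, the singular values and best rank-$k$ approximation errors of $M$ equal those of $\theta^*$. The plan is to combine (i) a standard oracle inequality for the singular value thresholding estimator, (ii) a sub-Gaussian bound on the operator norm of the noise matrix $\eps$, and (iii) the low-rank approximation result for anti-Monge matrices given in Proposition~\ref{prop:low-rank-approx}, and then optimize the resulting bound in the rank parameter $k$.

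\textbf{Step 1: Operator norm of the noise.} A standard $\eps$-net argument for sub-Gaussian matrices shows that
\[
\|\eps\|_{\mathsf{op}} \le C \sigma \sqrt{\dimone}
\]
with probability at least $1 - \exp(-\dimone)$. By choosing the constant in $\rho = C \sigma \sqrt{\dimone}$ large enough, we ensure $\rho \ge 2 \|\eps\|_{\mathsf{op}}$ on this high-probability event.

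\textbf{Step 2: SVT oracle inequality.} On this event, a standard analysis of the hard-thresholding SVT estimator (cf.\ the argument going back to Chatterjee and Gavish--Donoho) gives, for every integer $k \in \{0,1,\ldots,\dimtwo\}$,
\[
\|\thetasvt - M\|_F^2 \;\lesssim\; \sum_{i=1}^{\dimtwo} \min(\mu_i^2, \rho^2)
\;\le\; k \rho^2 + \sum_{i > k} \mu_i^2
\;=\; k \rho^2 + \|M - M_k\|_F^2,
\]
where $M_k$ denotes the best rank-$k$ approximation of $M$ in Frobenius norm. The same inequality immediately yields the trivial bound $\sum_i \min(\mu_i^2, \rho^2) \le \dimtwo \rho^2 \lesssim \dimone \dimtwo \sigma^2$, which accounts for the $\wedge \sigma^2$ term in the final rate.

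\textbf{Step 3: Low-rank approximation and optimization in $k$.} Because permutations preserve singular values, $\|M - M_k\|_F^2 = \|\theta^* - \theta^*_k\|_F^2$, so Proposition~\ref{prop:low-rank-approx} applies directly and controls this quantity by a quantity of the form $\dimone \dimtwo V(\theta^*)^2/k^3$ (up to log/constant factors), which is precisely the scaling one needs to match the target rate. Plugging in $\rho^2 \asymp \sigma^2 \dimone$ and optimizing the resulting expression $k \sigma^2 \dimone + \dimone \dimtwo V(\theta^*)^2/k^3$ over $k \in \{1,\ldots,\dimtwo\}$ yields the optimum at $k \asymp (\dimtwo V(\theta^*)/\sigma)^{1/2}$ (truncated to lie in the admissible range), which after normalization by $\dimone \dimtwo$ gives the claimed rate
\[
\frac{1}{\dimone \dimtwo}\|\thetasvt - M\|_F^2
\;\lesssim\; \frac{\sigma^2}{\dimtwo} + \frac{\sigma^{3/2} V(\theta^*)^{1/2}}{\dimtwo^{3/4}}.
\]
The additional $\sigma^2/\dimtwo$ term corresponds to the choice $k = 1$ (to cover the regime where $V(\theta^*)$ is small), and the $\wedge \sigma^2$ term follows from Step 2. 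The expectation bound follows by integrating the tail bound together with the deterministic worst-case bound $\|\thetasvt - M\|_F^2 \lesssim \|M\|_F^2 + \|\eps\|_F^2$ on the complementary event.

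\textbf{Main obstacle.} The substantive content lies in Step 3: the SVT analysis and the operator-norm bound are standard, so the only non-routine ingredient is invoking the sharp low-rank approximation rate for anti-Monge matrices from Proposition~\ref{prop:low-rank-approx}. The exponent on $k$ in that approximation determines the final exponents of $\sigma$ and $\dimtwo$, and verifying that the approximation is preserved under the latent permutations (which it is, since permutations act orthogonally) is what ensures the SVT bound transfers cleanly from $\theta^*$ to $M = \theta^*(\pi_1^*,\pi_2^*)$.
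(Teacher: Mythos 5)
Your proof follows essentially the same route as the paper: an operator-norm bound on the noise, a standard SVT oracle inequality yielding control of the form $\sum_i \min(\mu_i^2, \rho^2)$, and Proposition~\ref{prop:low-rank-approx} to bound the tail singular values, followed by optimizing in the rank parameter. There is a small arithmetic slip in the optimizer (balancing $k\sigma^2\dimone + \dimone\dimtwo V(\theta^*)^2/k^3$ gives $k \asymp (\dimtwo V(\theta^*)^2/\sigma^2)^{1/4}$, not $(\dimtwo V(\theta^*)/\sigma)^{1/2}$), but the resulting rate you report is correct, so this is inconsequential.
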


This rate sits between the minimax rate given by Theorem~\ref{thm:minimax}, and the rate for the {\sf Variance Sorting} estimator given by Theorem~\ref{thm:per-proj}. Note that for this result, the noise $\eps$ needs not be homoscedastic, and moreover, no knowledge of $V_0$ is required, i.e., the SVT estimator adapts to the quantity $V( \theta^* )$. 

The proof technique leading to upper bounds for the SVT estimator is well developed~\cite{Cha15, ShaBalGunWai17}. Our contribution mainly lies in the following low-rank approximation result for an anti-Monge matrix, which is of independent interest. 

\begin{proposition} \label{prop:low-rank-approx}
For any $\theta \in \cM^{\dimone, \dimtwo}$ and positive integer $r$,  there exists a rank-$(3r+3)$ matrix $\tilde{\theta} \in \R^{\dimone \times \dimtwo}$ such that
$$
\|\tilde{\theta} - \theta \|_F^2 \le 2 \frac{\dimone \dimtwo}{r^3} V(\theta)^2 .
$$
\end{proposition}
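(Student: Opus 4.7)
The plan begins by using the anti-Monge structure to decompose $\theta$ into a rank-$3$ ``boundary'' component and a pure interaction term. Set $M := \Done \theta \Dtwo^\top$, which is entrywise nonnegative by the anti-Monge assumption, with $\|M\|_1 = V(\theta)$. A telescoping identity gives
\[
\theta_{i,j} = \theta_{1,j} + \theta_{i,1} - \theta_{1,1} + \Phi_{i,j}, \qquad \Phi_{i,j} := \sum_{k < i}\sum_{\ell < j} M_{k,\ell}.
\]
The three boundary terms span a matrix of rank at most $3$, so the problem reduces to approximating the double cumulative sum $\Phi$ by a rank-$3r$ matrix with squared Frobenius error $\lesssim \dimone\dimtwo V(\theta)^2/r^3$.

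Next I would choose partitions $[\dimone] = \tilde I_1 \cup \cdots \cup \tilde I_r$ and $[\dimtwo] = \tilde J_1 \cup \cdots \cup \tilde J_r$ adapted to $M$, where each block is either (a) a single row or single column, or (b) satisfies the size bounds $|\tilde I_p| \le \lceil \dimone/r \rceil$, $|\tilde J_q| \le \lceil \dimtwo/r\rceil$ together with the $M$-mass bound $V_{p,q} := \sum_{k \in \tilde I_p,\, \ell \in \tilde J_q} M_{k,\ell} \le V(\theta)/r$. Define the block-marginal-additive approximation
\[
\hat\Phi_{i,j} := \bar\Phi_{i,q(j)} + \bar\Phi_{p(i),j} - \bar\Phi_{p(i),q(j)},
\]
where $\bar\Phi_{i,q}$, $\bar\Phi_{p,j}$, $\bar\Phi_{p,q}$ denote the averages of $\Phi$ over $j \in \tilde J_q$, over $i \in \tilde I_p$, and over $(i,j) \in \tilde I_p \times \tilde J_q$, respectively. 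Each of the three terms has rank at most $r$, so $\hat\Phi$ has rank at most $3r$, and the sum $\tilde\theta$ of $\hat \Phi$ with the rank-$3$ boundary piece has rank at most $3r + 3$.

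For the error bound, two observations are key. First, $\hat\Phi$ coincides with $\Phi$ exactly on any block with $|\tilde I_p| = 1$ or $|\tilde J_q| = 1$, since the averaging is trivial in that coordinate; such single-row and single-column blocks contribute zero error. Second, on a regular block of type (b), a direct manipulation shows that the residual on $\tilde I_p \times \tilde J_q$ equals
\[
(\Phi - \hat\Phi)_{i,j} = \sum_{k \in \tilde I_p,\, \ell \in \tilde J_q} M_{k,\ell}\, \phi_k(i)\, \phi_\ell(j),
\]
where $\phi_k(i)$ is the centered step function $\mathbbm{1}\{i > k\} - (\max \tilde I_p - k)/|\tilde I_p|$, bounded by $1$ in absolute value. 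Nonnegativity of $M$ then yields $\sum_{(i,j) \in \tilde I_p \times \tilde J_q}(\Phi - \hat\Phi)_{i,j}^2 \lesssim |\tilde I_p||\tilde J_q| V_{p,q}^2$. Summing over blocks,
\[
\|\theta - \tilde\theta\|_F^2 \lesssim \sum_{p,q} |\tilde I_p||\tilde J_q| V_{p,q}^2 \le \frac{\dimone\dimtwo}{r^2} \cdot \max_{p,q} V_{p,q} \cdot \sum_{p,q} V_{p,q} \le \frac{\dimone\dimtwo V(\theta)^2}{r^3}.
\]

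The main obstacle is the partition construction: the combined size and mass caps cannot in general be satisfied within a budget of $r$ blocks per dimension when $M$ has ``heavy'' rows or columns whose marginal mass exceeds $V(\theta)/r$. The fix is to place each heavy row (resp. column) in its own single-row (resp. single-column) block, so that it contributes no error by the first observation above. A pigeonhole argument shows that there are strictly fewer than $r$ heavy rows---each carries more than a $1/r$ fraction of the total mass $V(\theta)$---leaving room within the budget of $r$ row blocks; the remaining non-heavy rows, each with marginal mass $\le V(\theta)/r$, can then be greedily aggregated into blocks satisfying both caps, and analogously for columns. Verifying that this construction stays within $r$ blocks per dimension while producing the needed size and mass bounds on every regular block is where the bulk of the combinatorial work lies.
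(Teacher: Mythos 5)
Your decomposition $\theta_{i,j}=\theta_{1,j}+\theta_{i,1}-\theta_{1,1}+\Phi_{i,j}$ with $\Phi$ the double cumulative sum of $M=\Done\theta\Dtwo^\top\ge0$ is equivalent in spirit to the paper's use of Lemma~\ref{lem:c} (which peels off the rank-$2$ boundary and leaves a bivariate-isotonic block $B$), and the block-wise residual bound $\|(\Phi-\hat\Phi)|_{\tilde I_p\times\tilde J_q}\|_F^2\le|\tilde I_p||\tilde J_q|V_{p,q}^2$ is the right estimate. The gap is in the partition: you demand $r$ row blocks and $r$ column blocks satisfying \emph{both} a size cap and a per-block mass cap $V_{p,q}\le V(\theta)/r$, and the singleton-for-heavy-rows fix does not suffice. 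Non-heavy rows each have marginal $\le V(\theta)/r$, but once you aggregate several into a single interval the block's row marginal $\mu_p$ can exceed $V(\theta)/r$, and then $V_{p,q}\le\min(\mu_p,\nu_q)$ gives you nothing unless the column marginals are also capped---which is a separate requirement you have not budgeted for. Concretely, take $M$ uniform on the first half of the rows: no row is heavy, yet any partition into $r$ intervals of size $\lceil n_1/r\rceil$ puts $\approx 2V/r$ of mass into each of the $r/2$ intervals covering the support, violating the cap; you would need roughly $r$ mass-constrained intervals just for the support plus additional intervals for the size constraint and the mass-free tail, overshooting the budget of $r$ row blocks. The same problem recurs for columns. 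One can likely salvage the argument by allowing $O(r)$ blocks per dimension with a looser constant, but the claim that $r$ suffice needs a genuine proof, and the claimed constant $2$ would not survive.

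The paper sidesteps this entirely by never capping $V_{p,q}$ individually. Row blocks carry only a size constraint ($r+1$ of them), column blocks carry a size constraint interleaved with a \emph{column-band} mass cap $\sum_k V_{k,\ell}\le V(\theta)/r$, which is achievable with at most $2r$ column intervals because the mass cuts and the size cuts can simply be merged. The error sum is then controlled by the sharper per-band estimate $\sum_k V_{k,\ell}^2\le\bigl(\sum_k V_{k,\ell}\bigr)^2\le (V(\theta)/r)^2$, exploiting nonnegativity of the $V_{k,\ell}$'s, rather than a global $\max_{p,q}V_{p,q}\cdot\sum_{p,q}V_{p,q}$ bound. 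This is both easier to construct (one-sided mass constraint, no heavy/light case split) and sharper, which is why the paper lands on the constant $2$ with exactly $3r+3$ factors. Your $\max\cdot\sum$ Cauchy--Schwarz step is fine as an inequality, but you should replace the per-block mass cap with the paper's one-sided column-band cap and the $\sum_k(\cdot)^2\le(\sum_k\cdot)^2$ trick to close the argument cleanly.
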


Note that using a similar proof, the same rate as in Theorem~\ref{thm:svt} can be obtained for a soft-thresholding estimator as well, that is, for
\begin{equation}
	\label{eq:ju}
	\thetasoft \defn \sum_{i=1}^{\dimtwo} \big( (\lambda_i - \rho) \vee 0 \big) u_i v_i^\top,
\end{equation}
with a similar scaling for \( \rho \).

As the rate given in Theorem~\ref{thm:svt} does not match the minimax rate, it is natural to ask whether this suboptimality is an artifact of the proof or a true weakness of the SVT estimator. In Appendix~\ref{sec:svt-lower}, we present a worst-case anti-Monge matrix which cannot be approximated by any low-rank matrix at a rate better than that given by Proposition~\ref{prop:low-rank-approx}. This in turn gives evidence that the rate of convergence for the SVT estimator in Theorem~\ref{thm:svt} might be the best achievable by this method.

\section{Numerical experiments}
\label{sec:numerics}

In order to compare our theoretical guarantees with the empirical performance of the proposed estimators, we conducted experiments on synthetic data, using Dykstra's algorithm to project onto the cone of anti-Monge matrices.

We first present this projection algorithm in Section \ref{sec:dykstra}. We then show the experimental results of the  projection onto the cone of anti-Monge matrices in Section~\ref{sec:numerics-cone} and of the two efficient strategies for denoising pre-anti-Monge matrices in Section~\ref{sec:numerics-perm}.

\subsection{{Dykstra's algorithm for projecting onto the set of anti-Monge matrices}}
\label{sec:dykstra}


Since the set \( \cM \) is a convex cone specified by \( O(\dimone \dimtwo) \) constraints, the least-squares estimator \eqref{eq:LSE} can be calculated by a general purpose convex optimization software such as SCS \cite{DonChuPari16, scs} or EOCS \cite{DomChuBoy13}.
The most computationally intensive subroutine of these methods is usually solving linear systems associated with the constraints specifying \( \cM \).
Using direct methods to find these solutions results in a runtime that scales like \( (\dimone \dimtwo)^3 \), rendering calculations relatively slow even for moderate values of \( \dimone \) and \( \dimtwo \).
Hence, we chose to implement a specialized algorithm to calculate \( \theta \) based on Dykstra's projection algorithm \cite{BoyDyk86, ComPes11}.

In its general form (see Algorithm \ref{alg:dykstra}), this algorithm is designed to calculate the projection of a vector \( y \in \R^d \) onto the intersection of \( m \) convex sets \( \cM_1, \dots, \cM_m \) by iteratively projecting carefully chosen points to each individual set.
This is similar to alternate projections of a point to each of the sets \( \cM_1, \dots, \cM_m \), but when initialized with \( y \in \R^d \), Dykstra's algorithm not only finds a point in the intersection \( \bigcap_{j \in [m]} \cM_j \), but its iterates actually converge to the projection of \( y \) onto \( \bigcap_{j \in [m]} \cM_j  \).

\begin{algorithm}[H]
	\caption{{\sf Dykstra's algorithm}}
	\label{alg:dykstra}
	\begin{algorithmic}
		\Input \( y \in \R^d \), the point to project; $\cM_1, \ldots, \cM_m$ a collection of cones
		\Output \( \theta \), an approximation to the projection of $y$ onto $\cM_1 \cap \cdots \cap \cM_m$
		\Function{ProjectDykstra}{y}
		\For{\( i = 1, \dots, m \)}
			\State{\( p_i = \mathbf{0}_d \)}
			\Comment Initialize residuals
		\EndFor
		\State{\( \theta_m = y \)}
		\Comment Initialize iterates
		\While{not converged}
		\For{\( i = 1, \dots, m \)}
		\State \( \theta_i \gets \Pi_{\cM_i}(\theta_{(i - 2) \% m + 1} + p_i) \)
		\Comment Project shifted iterates
		\State \( p_i \gets \theta_{(i - 2) \% m + 1} + p_i - \theta_i \)
		\Comment Compute new residual
		\EndFor
		\EndWhile
		\State \Return \( \theta \)
		\EndFunction
	\end{algorithmic}
\end{algorithm}

To apply Dysktra's algorithm to the problem of projecting onto the cone of anti-Monge matrices, note that we can write \( \cM = \bigcap_{i_1=1}^{\dimone - 1} \bigcap_{i_2=1}^{\dimtwo - 1} \cM_{i_1, i_2} \) with
\begin{equation}
	\label{eq:fs}
	\cM_{i_1, i_2} \defn \Big\{ \theta \in \R^{\dimone, \dimtwo} : \sum_{j_1 \in \{0,1\}, \, j_2 \in \{0,1\}} (-1)^{j_1 + j_2} \theta_{i_1 + j_1, i_2 + j_2} \ge 0 \Big\} ,
\end{equation}
because a matrix is anti-Monge if and only if each contiguous \( 2 \times 2 \) submatrix is anti-Monge.
The projection of $y$ onto \( \cM_{i_1, i_2} \) can be explicitly calculated to be the matrix with entries
\begin{align}
	&\big[ \Pi_{\cM_{i_1, i_2}} (y) \big]_{i_1 + j_1, i_2 + j_2} \notag \\
	&=  y_{i_1 + j_1, i_2 + j_2} + \frac{(-1)^{j_1 + j_2}}{4} \max \left\{ -\sum_{k_1 \in \{0,1\}, \, k_2 \in \{0,1\}} (-1)^{k_1 + k_2} y_{i_1 + k_1, i_2 + k_2}, 0 \right\} \notag
\end{align}
for \( j_1, j_2 \in \{0,1\} \), and
\begin{equation}
	\label{eq:fu}
	\big[ \Pi_{\cM_{i_1, i_2}} (y) \big]_{\ell_1, \ell_2} =  y_{\ell_1, \ell_2},
\end{equation}
if \( (\ell_1, \ell_2) \notin (i_1 + \{0, 1\}) \times (i_2 + \{0, 1\}) \).

This leads to Algorithm \ref{alg:fast-proj} for projecting a matrix \( y \in \R^{\dimone \times \dimtwo} \) onto \( \cM \).

\begin{algorithm}[H]
	\caption{{\sf Fast Projection onto \( \cM \)}}
	\label{alg:fast-proj}
	\begin{algorithmic}
		\Input \( y \in \R^{\dimone \times \dimtwo} \)
		\Output \( \theta \approx \Pi_{\cM}(y) \)
		\Function{ProjetAntiMonge}{y}
		\State \( \eta \gets 0 \in \R^{(\dimone - 1) \times (\dimtwo - 1)} \)
		\Comment Initialize residuals
		\State \( \theta \gets y \),
		\Comment Initialize iterates
		\While{not converged}
		\For{\( i_1 = 1, \dots, \dimone - 1,\, i_2 = 1, \dots, \dimtwo - 1 \)}
			\State \( \tilde \eta \gets \max \left\{ -\sum_{j_1 \in \{0, 1\}, j_2 \in \{0, 1\}} (-1)^{j_1 + j_2} \theta_{i_1 + j_1, i_2 + j_2}/4 + \eta_{i_1, i_2}, 0 \right\} \)
			\Comment Compute new residuals
			\For{\( j_1 \in \{0, 1\}, \, j_2 \in \{0, 1\} \)}
				\State \( \theta_{i_1 + j_1, i_2 + j_2} \gets \theta_{i_1 + j_1, i_2 + j_2} + (-1)^{j_1 + j_2} (\tilde \eta - \eta_{i_1, i_2}) \)
				\Comment Project shifted iterates
			\EndFor
			\State \( \eta_{i_1, i_2} \gets \tilde \eta \)
			\Comment Store residuals
		\EndFor
		\EndWhile
		\State \Return \( \theta \)
		\EndFunction
	\end{algorithmic}
\end{algorithm}

The rate of convergence of Dykstra's method can be shown to be linearly exponential in the iterations \cite{DeuHun94}, that is, if we denote by \( \theta^{(k)} \) the \( k \)th iterate of \( \theta \) in Algorithm \ref{alg:fast-proj} and by \( \theta^\ast = \Pi_{\cM}(y) \), then \( \| \theta^{(k)} - \theta^\ast \|_2 \lesssim c^k \) for a constant \( c < 1 \).
However, note that the constant \( c \) may get closer to one with increasing \( \dimone \) and \( \dimtwo \), which is the case for isotonic regression as shown in \cite{DeuHun94} and matches our experience: simulations for larger values of \( \dimone \) and \( \dimtwo \) require more iterates before convergence.

In practice, convergence in Algorithm \ref{alg:fast-proj} can be checked by evaluating a measure of feasibility such as \( \| \Done \theta \Dtwo^\top \|_\infty \), or by checking when the distance between two successive iterates is small.

\subsection{Experiments for anti-Monge matrices}
\label{sec:numerics-cone}

In the following two sections, we assume \( n = \dimone = \dimtwo \) for simplicity.

For the estimation of anti-Monge matrices, we consider the following family of ground truth signals, motivated by the construction of the lower bounds in the proof of Theorem \ref{thm:lower_bound}.
First, for \( n \in \mathbb{N} \) and \( V, \sigma > 0 \), define \( \theta_{1, (n)} \in \R^{n \times n}\) as
\begin{equation}
	\label{eq:ji}
	(\theta_{1, (n)})_{i, j} = \frac{V}{\lfloor k \rfloor^2} \left\lfloor \frac{(i-1) k}{n-1} \right\rfloor \left\lfloor \frac{(j-1) k}{n-1} \right\rfloor, \quad i \in [n], j \in [n],
\end{equation}
where $k=(Vn/\sigma)^{1/3}$.
The ground truth \( \theta^\ast_{1, (n)} \) is obtained by centering \( \theta_{1, (n)} \) to have zero column and row sums.
Finally, we set $y = \theta^\ast_{1, (n)} + \varepsilon$ where  $\varepsilon_{i,j} \simiid \cN(0, \sigma)$
and report the average denoising error \( \| \theta^{\mathsf{ls}} - \theta^\ast_{1, (n)} \|_F^2 / n^2 \) over 20 repetitions.

Our simulations recover the three regimes for \( n \) that appear in Theorem \ref{thm:upper_bound}, although at  different signal-to-noise ratios governed by \( V/\sigma \).
Namely, on the one hand, for \( V = \sigma = 1 \), we see in Figure~\ref{fig:supmod-n1} an error decay of \( n^{-1.02} \approx n^{-1} \) for \( n \) between \( 10 \) and \( 160 \), obtained by linearly regressing the logarithm of the errors onto the logarithm of the \( n \) values.
On the other hand, for \( V = 2\cdot 10^6 \), we can see both a plateau when the trivial \( \sigma^2 \) error bound in Theorem \ref{thm:upper_bound} is active, as well as a decay of \( n^{-1.34} \approx n^{-4/3} \) at the beginning of the decay becoming effective, where the slope in the doubly logarithmic plot is read off between two consecutive points as indicated in Figure~\ref{fig:supmod-n2}.

Similarly, fixing \( n = 200 \), \( \sigma = 1 \), and varying \( V \) between \( 10^{-2} \) and \( 10^7 \), we can observe a \( V^{0.65} \approx V^{2/3} \) scaling in Figure~\ref{fig:supmod-V}. 
The overall curve is shallower, plateauing both at the far low and high end of \( V \), corresponding to the \( \sigma^2/n \) and \( \sigma^2 \) rates becoming active, respectively.

Finally, in Figure~\ref{fig:supmod-sigma}, when setting \( n = 300 \), \( V = 1 \), and varying \( \sigma \) between \( 10^{-7} \) and \( 1 \), we obtain slopes of \( \sigma^{2.01} \) and \( \sigma^{1.84} \) on the low and high end, while the lowest slope between consecutive points in the curve is \( \sigma^{1.34} \), which matches the theoretical rates of \( \sigma^2 \), \( \sigma^2/n \) and \( (V \sigma^2/n^2)^{2/3} \), respectively.

\subsection{Experiments for pre-anti-Monge matrices}
\label{sec:numerics-perm}
To illustrate the practical performance of the efficient methods presented for denoising a pre-anti-Monge matrix, {\sf Variance Sorting} and singular value thresholding (see Sections~\ref{sec:v-sort} and~\ref{sec:svt} respectively), we further perform experiments by using both methods on the following family of ground truth matrices:
\begin{equation}
	\label{eq:jk}
	\theta^\ast_{2, (n)} = \frac{V}{n-1} D^\dagger (D^{\dagger})^{\top}.
\end{equation}
These were chosen because the singular value decay we proved in Proposition \ref{prop:low-rank-approx}  is tight for these matrices (see Lemma \ref{lem:approx-lower}).
By contrast, each ground truth example in the previous subsection, \( \theta^\ast_{1, (n)} \), is a rank-one matrix, and hence should lead to an overall better performance of singular value thresholding that is independent of \( n \).

For the {\sf Variance Sorting} algorithm, we set \( V = 1 \), \( \sigma = 0.5 \) and report the approximation error induced by the estimated permutations, i.e.,
\begin{equation}
	\label{eq:jn}
	\min_{\substack{ \pi_1 \in \{\id, \pir\}\\  \pi_2 \in \{\id, \pirtwo\}}} \frac{1}{n^2} \| \theta^\ast( \pi_1 \circ \hat \pi_1,  \pi_2 \circ \hat \pi_2) - \theta^\ast \|_F^2
\end{equation}
for \( \theta^\ast = \theta^\ast_{2, (n)} \), averaged over \( 256 \) repetitions.
This measure of the approximation quality of the estimated permutations corresponds to the upper bound used in the proof of Proposition~\ref{prop:per-err} (see \eqref{eq:per-err}) and is applicable since by construction, \( \theta^\ast \) has row and column sums equal to zero.
It is the dominating part in the error analysis, leading to the rate reported in Theorem~\ref{thm:per-proj}, and it allows us to study a larger range of \( n \), avoiding the need for subsequent projection of the permuted \( y \) matrix.

In Figure~\ref{fig:variance-sort}, we observe that while for smaller \( n \), we see a slower decay than predicted, for larger \( n \), the decay scales like \( n^{-0.47} \approx n^{-1/2} \), close to the predicted rate.

Finally, we perform singular value thresholding on the same set of ground truth matrices, this time setting \( V = 1 \), \( \sigma = 0.1 \), and varying \( n \) between \( 20 \) and \( 500 \).
For this experiment, in Figure~\ref{fig:svt}, we plotted the full denoising error,
\begin{equation}
	\label{eq:jo}
	\frac{1}{n^2} \| \hat \theta - \theta^\ast \|_F^2,
\end{equation}
averaged over \( 64 \) repetitions.
As in the other experiments, we can see an error decay that is close to our theoretical guarantees, that is, \( n^{-0.73} \approx n^{-3/4} \).

\begin{figure}[ht]
	\label{fig:supmod}
	\begin{subfigure}{0.45\textwidth}
		\includegraphics[width=\textwidth]{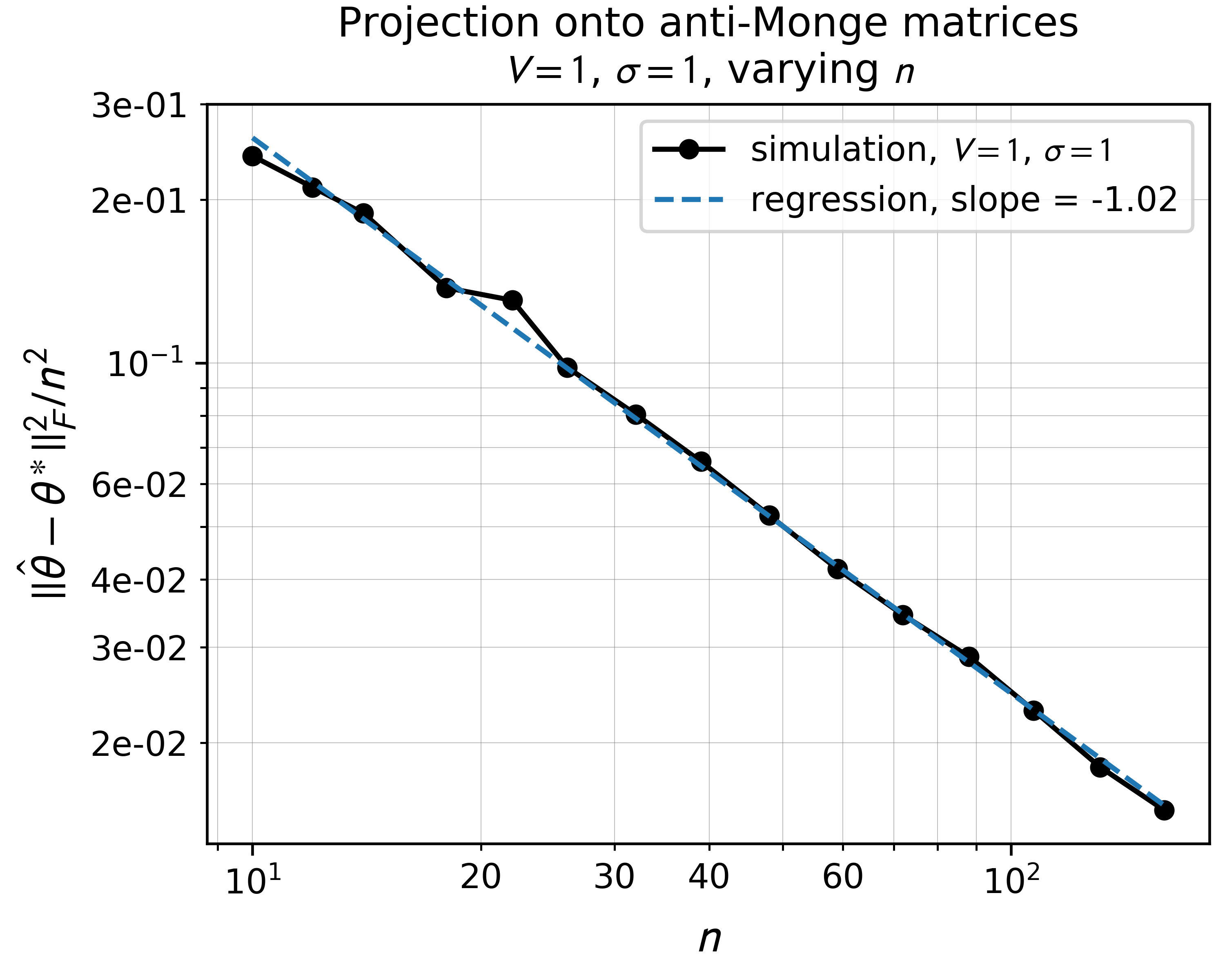}
		\caption{\( n^{-1} \) scaling}
		\label{fig:supmod-n1}
	\end{subfigure}
	\hspace{1em}
	\begin{subfigure}{0.45\textwidth}
		\includegraphics[width=\textwidth]{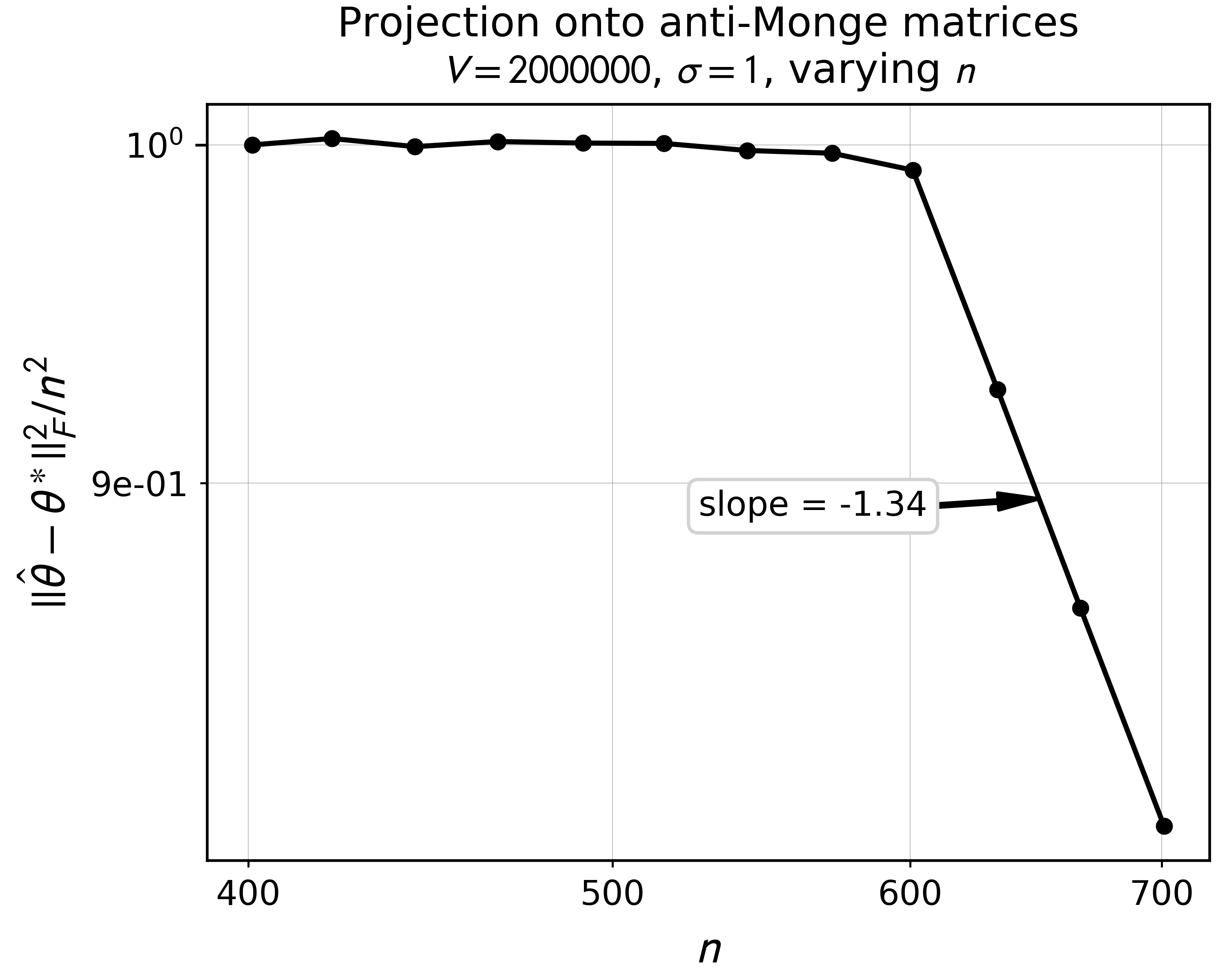}
		\caption{\( n^{- 4/3} \) scaling}
		\label{fig:supmod-n2}
	\end{subfigure}
	\caption{Varying \( n \) for projection onto \( \cM \).
	When an arrow is present, ``slope" indicates the slope between two consecutive points.}
\end{figure}
\begin{figure}[ht]
	\label{fig:supmod2}
	\begin{subfigure}{0.45\textwidth}
		\includegraphics[width=\textwidth]{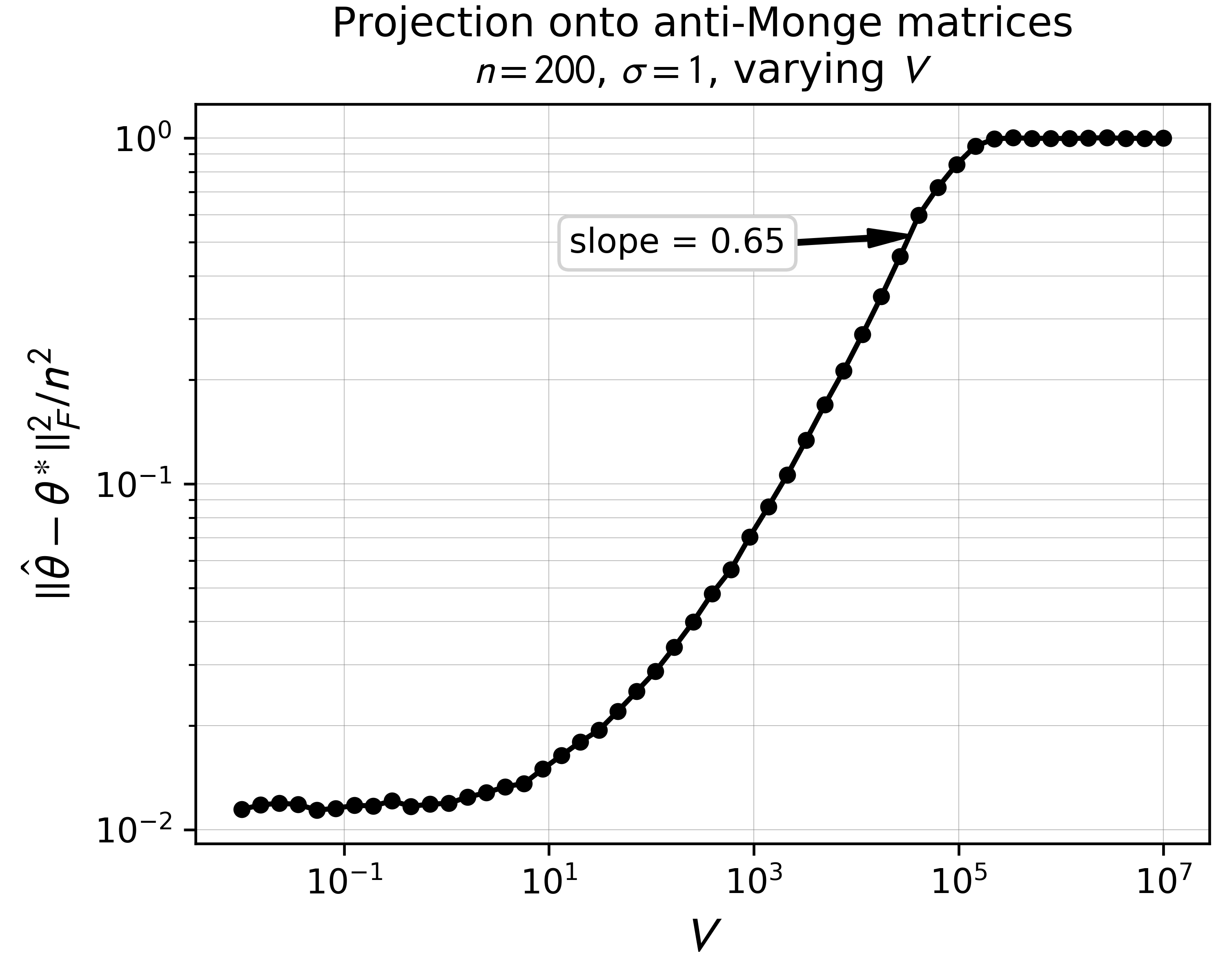}
		\caption{Scaling with respect to \( V \)}
		\label{fig:supmod-V}
	\end{subfigure}
	\hspace{1em}
	\begin{subfigure}{0.45\textwidth}
		\includegraphics[width=\textwidth]{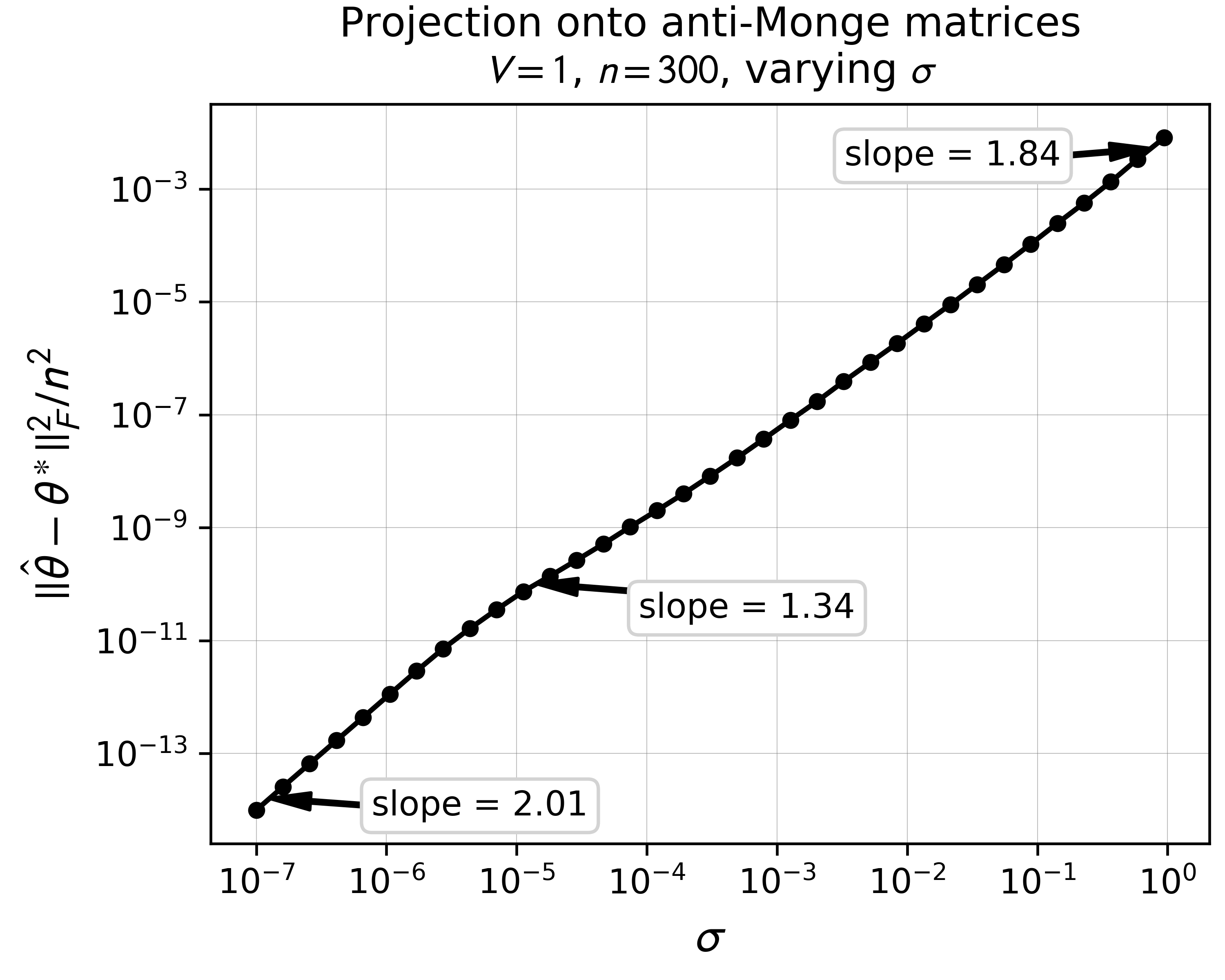}
		\caption{Scaling with respect to \( \sigma \)}
		\label{fig:supmod-sigma}
	\end{subfigure}
	\caption{Varying \( \sigma \), and \( V \) individually for projection onto \( \cM \).
	When an arrow is present, ``slope" indicates the slope between two consecutive points.}
\end{figure}
\begin{figure}[ht]
	\label{fig:perm}
	\begin{subfigure}{0.45\textwidth}
		\includegraphics[width=\textwidth]{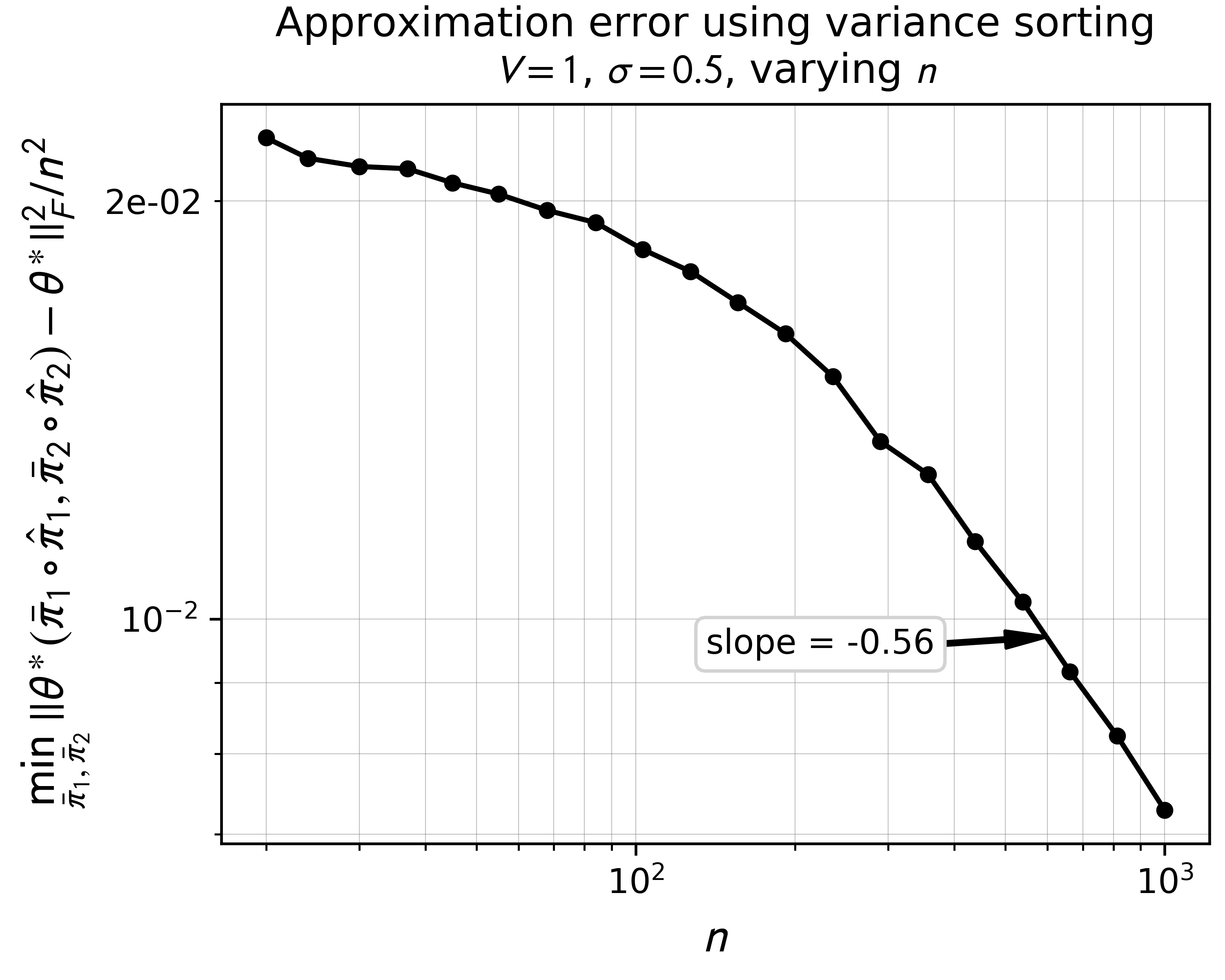}
		\caption{Variance sorting}
		\label{fig:variance-sort}
	\end{subfigure}
	\hspace{1em}
	\begin{subfigure}{0.45\textwidth}
		\includegraphics[width=\textwidth]{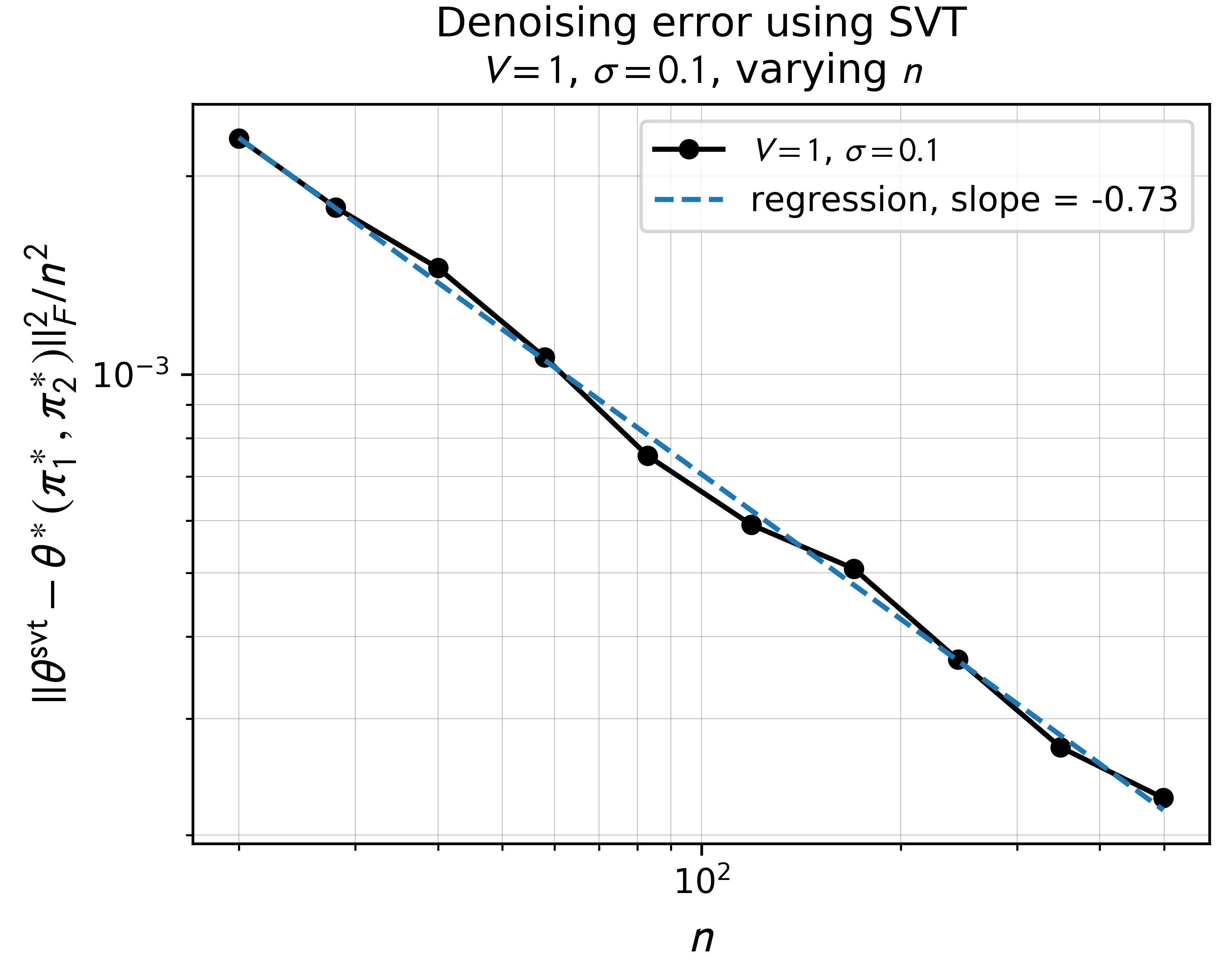}
		\caption{Singular value thresholding}
		\label{fig:svt}
	\end{subfigure}
	\caption{Algorithms for denoising pre-anti-Monge matrix.
	When an arrow is present, ``slope" indicates the slope between two consecutive points.}
\end{figure}

\section{Proofs}
\label{sec:proofs}

In this section, we provide the proofs of our results. 
Recall that $\Done$ is defined in~\eqref{eq:def-d} and $\Dtwo$ is defined analogously for dimension $\dimtwo$. 
In the sequel, whenever we introduce notation in dimension $\dimone$,  the analogous object in dimension $\dimtwo$ is denoted by the same symbol with a tilde.

\subsection{A structural lemma}

To gain insight into the set of anti-Monge matrices, we first state and prove the following structural lemma, which says that any anti-Monge matrix can be written as the sum of a constant-row matrix, a constant-column matrix, and an anti-Monge, bivariate isotonic matrix. 
Several structural decompositions of this type are known, 
e.g., Lemma~2.1 of~\cite{BurKliRud96} and Lemma~2.5 of~\cite{Fie02}. 
Our lemma is stated in a form that is convenient for application, and we provide a self-contained proof which also facilitates understanding the structure of an anti-Monge matrix. 

\begin{lemma}
\label{lem:c}
For each \( \theta \in \cM^{\dimone, \dimtwo} \), there exists a unique triple \( (R, S, B) \) of matrices in \( \R^{\dimone \times \dimtwo} \) such that
\begin{itemize}
\item
$\theta = R + S + B$;

\item
$R \Dtwo^\top = 0$ and $\Done S = 0$, i.e., \( R \) has constant rows and \( S \) has constant columns;

\item
$S_{i, 1} = B_{i,1} = B_{1,j} = 0$ for $i \in [\dimone] , \, j \in [\dimtwo] . $
\end{itemize}
Moreover, we have that
\begin{itemize}
\item
$B$ is anti-Monge;

\item
\( B  \) is bivariate isotonic, i.e., $B$ has nondecreasing rows and columns;

\item
$ \max\{ \| R \|_\infty, \| S \|_\infty, \| B \|_\infty \} \leq 4 \| \theta \|_\infty $;

\item
$\| B \|_\infty = B_{\dimone, \dimtwo} = V(\theta)$.
\end{itemize}
\end{lemma}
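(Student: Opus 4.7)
The plan is to exhibit an explicit candidate triple $(R, S, B)$, verify the listed algebraic and shape properties, and then obtain uniqueness by a subtraction argument. Motivated by the fact that $B$ must vanish on the first row and column while carrying all the second-order content of $\theta$, I would define
\[
B_{i,j} \defn \theta_{i,j} - \theta_{i,1} - \theta_{1,j} + \theta_{1,1}, \qquad R_{i,j} \defn \theta_{i,1}, \qquad S_{i,j} \defn \theta_{1,j} - \theta_{1,1}.
\]
This is the discrete analogue of a primitive of the mixed second difference of $\theta$ anchored at the $(1,1)$-corner. Summing gives $R + S + B = \theta$ by cancellation; $R$ is $j$-independent so $R\Dtwo^\top = 0$; $S$ is $i$-independent so $\Done S = 0$; and the equalities $S_{i,1} = B_{i,1} = B_{1,j} = 0$ are immediate from the formulas.

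Next I would verify the shape properties of $B$. Since both $R$ and $S$ are annihilated by $\Done(\cdot)\Dtwo^\top$, we get $\Done B\Dtwo^\top = \Done \theta \Dtwo^\top \ge 0$, so $B$ is anti-Monge. For bivariate isotonicity I compute
\[
B_{i,j+1} - B_{i,j} = (\theta_{i,j+1} - \theta_{i,j}) - (\theta_{1,j+1} - \theta_{1,j}),
\]
which is nonnegative upon applying the anti-Monge inequality to the $2\times 2$ block on rows $\{1, i\}$ and columns $\{j, j+1\}$; the column-monotonicity of $B$ follows by the symmetric argument. Since $B_{1,1} = 0$ and $B$ is bivariate isotonic, $B \ge 0$ and attains its maximum at $(\dimone, \dimtwo)$, so $\|B\|_\infty = B_{\dimone, \dimtwo}$, which equals $V(\theta)$ directly from the definition \eqref{eq:def-v}. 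The sup-norm bounds then follow from the triangle inequality: $\|R\|_\infty \le \|\theta\|_\infty$, $\|S\|_\infty \le 2\|\theta\|_\infty$, and $\|B\|_\infty \le 4\|\theta\|_\infty$.

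For uniqueness, suppose $(R', S', B')$ is another valid triple and set $D \defn B - B'$. Because $R - R'$ and $S - S'$ are killed by $\Done(\cdot)\Dtwo^\top$, we have $\Done D\Dtwo^\top = 0$, while $D$ vanishes on the first row and column by the boundary conditions. Integrating these second-order differences back up from the boundary forces $D \equiv 0$, hence $B = B'$. It follows that $R - R' = -(S - S')$, whose left side depends only on $i$ and right side only on $j$, so both are a common constant; evaluating at $j=1$ and using $S_{i,1} = S'_{i,1} = 0$ pins that constant to zero. The only genuinely non-routine step is the bivariate isotonicity check for $B$, as this is the single place where the anti-Monge inequality is used beyond the automatic nonnegativity of $\Done B\Dtwo^\top$; every other item is either a direct substitution or a discrete integration.
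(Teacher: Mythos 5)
Your proof is correct and the construction is identical to the paper's: the explicit formulas $R_{i,j} = \theta_{i,1}$, $S_{i,j} = \theta_{1,j} - \theta_{1,1}$, $B_{i,j} = \theta_{i,j} - \theta_{i,1} - \theta_{1,j} + \theta_{1,1}$ are exactly what the paper obtains from its sequential determination of $R$, then $S$, then $B$. The only substantive differences are stylistic: you verify bivariate isotonicity directly via the anti-Monge inequality on the $2\times 2$ block with rows $\{1,i\}$, whereas the paper derives it from the boundary conditions together with monotonicity of the cumulative sums of $\Done B$; and you give an explicit subtraction argument for uniqueness where the paper records it implicitly through the forced choices at each step. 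Both proofs hinge on the same three ideas and yield the same decomposition.
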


\begin{proof}
By the condition $S_{i, 1} = B_{i, 1} = 0$ for $i \in [\dimone]$, the first column of $R$ is equal to that of $\theta$. Since $R$ has constant rows, it is uniquely defined by $R = \theta_{\cdot,1} \AOtwo^\top$
where \( \AOtwo \) denotes the all-ones vector in \( \R^{\dimtwo} \).
Moreover, by the condition $B_{1, j} = 0$ for $j \in [\dimtwo]$, the first row of $S$ is equal to that of $\theta - R$. Since $S$ has constant columns, it is uniquely defined by $S = \AOone (\theta - R)_{1, \cdot}$
where \( \AOone \) denotes the all-ones vector in \( \R^{\dimone} \).
Finally, define $B = \theta - R - S.$

Since \( \Done S = R \Dtwo^\top = 0 \), we have $\Done B \Dtwo^\top = \Done (\theta - R - S) \Dtwo^\top = \Done \theta \Dtwo^\top \geq 0$.
so that  $B$ is anti-Monge. Moreover, this implies
\begin{align}
(\Done B)_{i, j} \leq {} & (\Done B)_{i, j+1}, \quad \text{for } i \in [\dimone], \, j \in [\dimtwo-1], \quad \text{and}\label{eq:aq}\\
(B \Dtwo^\top)_{i, j} \leq {} & (B \Dtwo^\top)_{i+1, j}, \quad \text{for } i \in [\dimone-1], \, j \in [\dimtwo]. \label{eq:ar}
\end{align}
The first column and first row of \( B \) are equal to \( 0 \) by construction.
Consequently, we obtain that \( (D B)_{\cdot, 1} = 0 \), and by \eqref{eq:aq}, this gives $D B \geq 0$ and similarly $\theta \Dtwo^\top \geq 0$ by~\eqref{eq:ar}\,.
Together, these facts yield that \( B \) is bivariate isotonic.

Additionally, from the triangle inequality and the way we constructed \( R, S, \) and \( B \), we get
\begin{align}
\label{eq:at}
\| R \|_\infty \leq \| \theta \|_\infty, \quad \| S \|_\infty \leq 2 \| \theta \|_\infty, \quad \text{and} \quad \| B \|_\infty \leq 4 \| \theta \|_\infty.
\end{align}
Since \( B \) also inherits the variation of \( \theta \) and is nonnegative, $\| B \|_\infty = V(B) = V(\theta)$, 
%
which completes the proof.
\end{proof}

\subsection{Proof of Theorem \ref{thm:upper_bound}}

To control the performance of a least-squares estimator, we employ Chatterjee's variational formula~\cite{Cha14} that we recall below. See, e.g., Lemma 6.1 of~\cite{FlaMaoRig19} for this deterministic form.

\begin{lemma}[Chatterjee's variational formula] \label{lem:variational}
Let $\cM$ be a closed subset of ${\R}^d$. Suppose that $y = \theta^* + \eps$ where $\theta^* \in \cM$ and  $\eps \in {\R}^d$. Let $\hat \theta \in \operatorname*{argmin}_{\theta \in \cM} \|y- \theta\|_2^2$ be a projection of $y$ onto $\cM$. Define the function $f_{\theta^*}: {\R}_+ \to {\R}$ by
\[
f_{\theta^*}(t) = \sup_{\theta \in \cM , \, \|\theta - \theta^*\|_2 \le t} \langle \eps, \theta - \theta^* \rangle - \frac{t^2}2 .
\]
Then we have
\begin{align} \label{eq:variational}  
\|\hat \theta - \theta^*\|_2 \in \argmax_{t \ge 0} f_{\theta^*}(t). 
\end{align}
Moreover, if there exists $t^* > 0$ such that $f_{\theta^*}(t) < 0$ for all $t \ge t^*$, then
$\|\hat \theta - \theta^*\|_2 \le t^*.$
\end{lemma}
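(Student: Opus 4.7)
The plan is to derive $\|\hat\theta - \theta^*\|_2 \in \argmax_{t \ge 0} f_{\theta^*}(t)$ directly from the defining inequality of the Euclidean projection, with no structure on $\cM$ beyond closedness. I would begin by substituting $y = \theta^* + \eps$ into the minimality property $\|y - \hat\theta\|_2^2 \le \|y - \theta\|_2^2$, which holds for every $\theta \in \cM$, expanding both squared norms and canceling the common $\|\eps\|_2^2$ term. Writing $\hat t := \|\hat\theta - \theta^*\|_2$, this yields the pointwise inequality
\[
\langle \eps, \theta - \theta^* \rangle - \frac{1}{2}\|\theta - \theta^*\|_2^2 \;\le\; \langle \eps, \hat\theta - \theta^* \rangle - \frac{\hat t^2}{2} \qquad \text{for every } \theta \in \cM.
\]

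Next, I would fix an arbitrary $t \ge 0$ and, for $\theta \in \cM$ with $\|\theta - \theta^*\|_2 \le t$, use the bound $-\frac{1}{2}\|\theta - \theta^*\|_2^2 \ge -\frac{1}{2} t^2$ to rewrite the previous display as
\[
\langle \eps, \theta - \theta^*\rangle - \frac{t^2}{2} \;\le\; \langle \eps, \hat\theta - \theta^* \rangle - \frac{\hat t^2}{2}.
\]
Taking the supremum over admissible $\theta$ on the left gives $f_{\theta^*}(t) \le \langle \eps, \hat\theta - \theta^* \rangle - \frac{\hat t^2}{2}$. Since $\hat\theta$ is itself admissible in the sup defining $f_{\theta^*}(\hat t)$, the right-hand side is bounded by $f_{\theta^*}(\hat t)$, so $f_{\theta^*}(t) \le f_{\theta^*}(\hat t)$ for every $t \ge 0$, which is exactly \eqref{eq:variational}.

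For the ``moreover'' claim, taking $\theta = \theta^* \in \cM$ in the sup defining $f_{\theta^*}(0)$ gives $f_{\theta^*}(0) = 0$, so the maximizer $\hat t$ satisfies $f_{\theta^*}(\hat t) \ge 0$; under the hypothesis that $f_{\theta^*}(t) < 0$ for every $t \ge t^*$, this forces $\hat t < t^*$, and in particular $\hat t \le t^*$. The only mild subtlety in the argument is the direction of the bound in the second step: the constraint $\|\theta - \theta^*\|_2 \le t$ must be used to bound $-\frac{1}{2}\|\theta - \theta^*\|_2^2$ from below, which works precisely because of the minus sign and is what allows $t^2$ to be pulled out of the supremum. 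I do not anticipate any other obstacle; the rest is a direct algebraic manipulation of the projection inequality.
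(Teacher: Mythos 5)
Your proof is correct: the projection inequality $\|y-\hat\theta\|_2^2\le\|y-\theta\|_2^2$, after cancelling $\|\eps\|_2^2$ and using $\|\theta-\theta^*\|_2\le t$ to bound $-\tfrac12\|\theta-\theta^*\|_2^2$ from below, gives $f_{\theta^*}(t)\le f_{\theta^*}(\hat t)$ for $\hat t=\|\hat\theta-\theta^*\|_2$, and $f_{\theta^*}(0)=0$ yields the second claim. The paper does not prove this lemma itself but cites it (Chatterjee's deterministic variational formula, Lemma~6.1 of the statistical seriation paper), and your argument is exactly that standard proof, so nothing further is needed.
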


To control the supremum in Lemma~\ref{lem:variational}, note that it suffices to consider Gaussian noise here, since the generalization to sub-Gaussian noise is taken care of by Theorem \ref{thm:tail}.


\begin{proposition} \label{prop:sup-id}
Fix an anti-Monge matrix $\theta^* \in \cM$, and suppose that $\gaussiannoise \in \R^{\dimone \times \dimtwo}$ has i.i.d. $\cN(0, 1)$ entries.
Then for any integer $k \in [\dimone \dimtwo]$ and any \( t > 0 \), we have
\begin{align*}
	\E \Big[ \sup_{\substack{\theta \in \cM\\ \|\theta - \theta^* \|_F \leq t}} \langle \gaussiannoise , \theta - \theta^* \rangle \Big]
	& \lesssim t \left[ \sqrt{\dimone} + \sqrt{ k \log (\dimtwo)} + \sqrt{ \log (\dimone) \log (\dimtwo) } \left( \frac{\dimone \dimtwo}{k} \right)^{1/4} \right] \\ 
	& \quad \  + \sqrt{\frac{\dimone \dimtwo }{k}} \sqrt{\log (\dimone) \log (\dimtwo)} \, V(\theta^*) .
\end{align*}
\end{proposition}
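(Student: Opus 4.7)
The plan is to decompose both \( \theta \) and \( \theta^* \) orthogonally in the Frobenius sense into a grand mean, a row effect, a column effect, and a doubly-centered anti-Monge residual, and then to control the residual using the rank-\( k \) approximation of Proposition~\ref{prop:low-rank-approx}. Every matrix in \( \R^{\dimone \times \dimtwo} \) decomposes uniquely as \( \theta = \mu \AOone \AOtwo^\top + a \AOtwo^\top + \AOone b^\top + W \) with \( \AOone^\top a = \AOone^\top b = 0 \) and \( W \) having zero row and column sums; the four pieces are mutually Frobenius-orthogonal. Since \( \Done \) and \( \Dtwo \) annihilate constants, \( \Done \theta \Dtwo^\top = \Done W \Dtwo^\top \), so whenever \( \theta \) is anti-Monge so is \( W \), and the same decomposition applies to \( \theta^* \). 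By orthogonality, the constraint \( \|\theta - \theta^*\|_F \le t \) translates into the componentwise bounds \( \sqrt{\dimone \dimtwo}\,|\Delta \mu| \le t \), \( \sqrt{\dimtwo}\,\|\Delta a\|_2 \le t \), \( \sqrt{\dimone}\,\|\Delta b\|_2 \le t \), and \( \|\Delta W\|_F \le t \).

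The mean, row, and column pieces are then handled by Cauchy-Schwarz and standard Gaussian vector norm inequalities. For instance, \( \sup\langle \gaussiannoise, \Delta a \AOtwo^\top \rangle = \sup \Delta a^\top (\gaussiannoise\AOtwo) \le (t/\sqrt{\dimtwo})\|\gaussiannoise\AOtwo\|_2 \), and since \( \gaussiannoise\AOtwo \in \R^{\dimone} \) has independent \( \cN(0, \dimtwo) \) entries, \( \E\|\gaussiannoise\AOtwo\|_2 \le \sqrt{\dimone\dimtwo} \), producing a contribution \( \lesssim t\sqrt{\dimone} \); the mean and column contributions give at most \( t \) and \( t\sqrt{\dimtwo} \le t\sqrt{\dimone} \). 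These together yield the \( t\sqrt{\dimone} \) term. For the doubly-centered anti-Monge residual \( \Delta W \), we apply Proposition~\ref{prop:low-rank-approx} to each of \( W \) and \( W^* \) to obtain rank-\( O(k) \) approximants \( \widetilde W, \widetilde W^* \) with \( \|W - \widetilde W\|_F \lesssim \sqrt{\dimone\dimtwo/k^3}\,V(\theta) \) and \( \|W^* - \widetilde W^*\|_F \lesssim \sqrt{\dimone\dimtwo/k^3}\,V(\theta^*) \); since \( V \) is \( 4 \)-Lipschitz with respect to the entrywise \( \ell^\infty \)-norm and the latter is dominated by \( \|\cdot\|_F \), we can replace \( V(\theta) \) by \( V(\theta^*) + 4t \). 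Writing
\begin{equation}
\langle \gaussiannoise, \Delta W \rangle = \langle \gaussiannoise, \widetilde W - \widetilde W^* \rangle + \langle \gaussiannoise, W - \widetilde W\rangle - \langle \gaussiannoise, W^* - \widetilde W^* \rangle,
\end{equation}
we then bound the first summand by a \( \varepsilon \)-net argument inside the fixed low-dimensional subspace in which the approximants of Proposition~\ref{prop:low-rank-approx} live, giving \( \lesssim t\sqrt{k\log\dimtwo} \); and each residual summand is handled by Cauchy-Schwarz combined with a restricted operator-norm bound for \( \gaussiannoise \) that contributes the factor \( \sqrt{\log\dimone\log\dimtwo} \) and, after inserting the Frobenius estimates, produces the remaining \( t(\dimone\dimtwo/k)^{1/4}\sqrt{\log\dimone\log\dimtwo} \) and \( V(\theta^*)\sqrt{\dimone\dimtwo/k}\sqrt{\log\dimone\log\dimtwo} \) contributions.

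The main obstacle is obtaining the sharp factor \( \sqrt{k\log\dimtwo} \) in the Gaussian complexity of the low-rank component, rather than the naive \( \sqrt{k(\dimone + \dimtwo)} \) one gets for arbitrary rank-\( k \) matrices in a Frobenius ball. This hinges on the fact that the approximants constructed in Proposition~\ref{prop:low-rank-approx} are not arbitrary low-rank matrices but are sparse in a fixed basis arising from the spectral decomposition of the difference operators \( \Done \) and \( \Dtwo \); the corresponding covering number is thus polynomial in \( \dimtwo \) rather than exponential in \( \dimone + \dimtwo \), so a single logarithmic factor suffices. Carefully tracking this structured approximation while propagating the \( V(\theta^*) + O(t) \) trade-off so that the \( t \)-dependent and \( V(\theta^*) \)-dependent parts of the bound separate cleanly is the most delicate step.
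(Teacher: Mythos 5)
Your decomposition of the kernel component (grand mean, row effects, column effects) and the resulting $t\sqrt{\dimone}$ bound matches the paper's $\Pi_1$ projection onto $\ker\cD$, and the Lipschitz estimate $V(\theta)\le V(\theta^*)+4t$ is correct. However, the key step of your argument rests on a misreading of Proposition~\ref{prop:low-rank-approx} that the paper's proof does not make. You assert that the rank-$O(k)$ approximants $\widetilde W$ produced by that proposition are ``sparse in a fixed basis arising from the spectral decomposition of $\Done$ and $\Dtwo$,'' and therefore live in a subspace of dimension $O(k\log\dimtwo)$. That is not what the proposition gives: its approximant $X+Y$ is constructed by a block-averaging subdivision whose cutpoints $\{j_\ell\}$ depend on the matrix $B$ through the values $B_{\dimone,j}$, and even for a fixed grid the resulting piecewise-constant matrices have $\Theta(k\dimone)$ free parameters. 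The union of all such approximants as $\theta$ ranges over the constraint set is a family of rank-$O(k)$ matrices in a Frobenius ball, whose Gaussian width is of order $t\sqrt{k(\dimone+\dimtwo)}$, not $t\sqrt{k\log\dimtwo}$. You correctly flag this as ``the main obstacle'' but your proposed resolution does not close it, and without it your $k$-optimization cannot reproduce the $(\sigma^2V/\dimone\dimtwo)^{2/3}$ rate of Theorem~\ref{thm:upper_bound}.

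The paper takes a genuinely different route that avoids this problem entirely: it never invokes Proposition~\ref{prop:low-rank-approx} in the proof of Proposition~\ref{prop:sup-id}. Instead it performs a \emph{spectral} truncation of the noise in the eigenbasis of $\cD(A) = \Done A \Dtwo^\top$, projecting onto the span of the outer products $w_\ell\tilde w_r^\top$ with $\ell r \le k$ (the set $J$), whose cardinality is $\lesssim k\log\dimtwo$ by a hyperbola lattice count — this is a count of spectral modes, not a rank. For the high-frequency remainder, the paper does not use Cauchy--Schwarz plus an operator-norm bound; it exploits the sign constraint $\Done\theta\Dtwo^\top\ge 0$ to rewrite the inner product as a weighted $\ell^1/\ell^\infty$ pairing against a carefully designed weight matrix $\Phi$, then controls $\|(\Done^\dag)^\top(I-\Pi_2)(\gaussiannoise)\Dtwo^\dag \oslash\Phi\|_\infty$ by a per-entry sub-Gaussian tail (Lemma~\ref{lem:var-bd}) and $\|D^\top\Phi\Dtwo\|_F$ by direct calculation (Lemma~\ref{lem:phi-fro}). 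Your ``restricted operator-norm bound contributing $\sqrt{\log\dimone\log\dimtwo}$'' is not a known estimate, and there is no obvious candidate that would make your three-term split work. To salvage your approach you would need to replace the low-rank approximation step with the spectral projection $\Pi_2$ — at which point you have essentially reconstructed the paper's argument.
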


To show Theorem \ref{thm:upper_bound} taking Proposition \ref{prop:sup-id} as given, let \( t > 0 \) and \( 1 \le k \le \dimone \dimtwo \) to be chosen later.
Note that by Theorem~\ref{thm:maj} and Proposition~\ref{prop:sup-id}, we obtain 
\begin{align*}
	&\gamma_2 \big( \{ \theta - \theta^* : \theta \in \cM, \, \|\theta - \theta^* \|_F \leq t \} \big)
	\asymp \E_{\gaussiannoise_{i,j} \simiid \cN(0, 1) } \Big[ \sup_{\substack{\theta \in \cM\\ \|\theta - \theta^* \|_F \leq t}} \langle \gaussiannoise , \theta - \theta^* \rangle \Big] \\
	& \lesssim t \left[ \sqrt{\dimone} + \sqrt{ k \log (\dimtwo)} + \sqrt{ \log (\dimone) \log (\dimtwo) } \left( \frac{\dimone \dimtwo}{k} \right)^{1/4} \right] 
	+ \sqrt{\frac{\dimone \dimtwo }{k}} \sqrt{\log (\dimone) \log (\dimtwo)} \, V(\theta^*),
\end{align*}
where \( \gamma_2 \) denotes Talagrand's \( \gamma_2 \) functional.
Therefore, Theorem~\ref{thm:tail} yields that with probability $1 - 4 \exp(-s^2)$, 
\begin{align}
	\sup_{\substack{\theta \in \cM\\ \|\theta - \theta^* \|_F \leq t}} \langle \eps , \theta - \theta^* \rangle 
	& \lesssim t \sigma \left[ \sqrt{\dimone} + \sqrt{ k \log (\dimtwo)} + \sqrt{ \log (\dimone) \log (\dimtwo) } \left( \frac{\dimone \dimtwo}{k} \right)^{1/4} \right]  \notag  \\ 
	& \quad \  + \sigma \sqrt{\frac{\dimone \dimtwo }{k}} \sqrt{\log (\dimone) \log (\dimtwo)} \, V(\theta^*) 
	+ \sigma s t . \label{eq:s-prob}
\end{align}

Let us define 
\[
f_{\theta^*}(t) = \sup_{\substack{ \theta \in \cM \\ \| \theta  - \theta^* \|_F \le t }} \langle \eps,  \theta - \theta^* \rangle - \frac{t^2}2 .
\]
we obtain that for any 
\begin{align}
	\label{eq:dl}
	t > t^*_s & \defn C \sigma \left[ \sqrt{\dimone} + \sqrt{ k \log (\dimtwo) } + \sqrt{ \log (\dimone) \log (\dimtwo) } \left( \frac{\dimone \dimtwo}{k} \right)^{1/4} \right] \\ 
	&\quad \  + C \Big[ \sigma \sqrt{\frac{\dimone \dimtwo }{k}} \sqrt{\log (\dimone) \log (\dimtwo)} \, V(\theta^*) \Big]^{1/2}
	+ C \sigma s ,
\end{align}
where \( C \) is a sufficiently large constant, it holds with probability at least $1 - 4 \exp ( - s^2 )$ that 
\begin{align}
	\label{eq:dn}
	f_{\theta^*}( t ) < 0.
\end{align}
Therefore by Lemma~\ref{lem:variational}, we obtain that with probability at least $1 - 4 \exp( - s^2 )$, 
\begin{align}
	& \frac{1}{\dimone \dimtwo} \| \thetals - \theta^* \|_F^2 
	\le \frac{(t^*)^2}{\dimone \dimtwo}  \label{eq:do} \\
	& \lesssim \sigma^2 \left[ \frac{1}{\dimtwo} + \frac{k \log (\dimtwo)}{\dimone \dimtwo}  + \frac{ \log (\dimone) \log (\dimtwo) }{\sqrt{ \dimone \dimtwo k }} \right] 
	+ \sigma \sqrt{\frac{\log (\dimone) \log (\dimtwo)}{\dimone \dimtwo k }} \, V(\theta^*) 
	+ \sigma^2 \frac{ s^2 }{ \dimone \dimtwo } . \notag
\end{align}

We now choose $s = 2 \sqrt{\dimone}$. 
Balancing the terms that depend on \( k \) leads to the choice
\begin{align}
	\label{eq:dp}
	k^* \defn (\dimone \dimtwo)^{1/3} \log(\dimone)^{1/3} \left[ \sqrt{\log(\dimone)} + \frac{V(\theta^*)}{\sigma \sqrt{\log(\dimtwo)}} \right]^{2/3},
\end{align}
in addition to possibly rounding \( k^\ast \) to an integer which we omitted to simplify the presentation.
Therefore, we obtain that with probability $1 - \exp(- \dimone)$, if \( 1 \le k^\ast \le \dimone \dimtwo \), then 
\begin{align}
	\frac{1}{\dimone \dimtwo} \| \thetals - \theta^* \|_F^2
	\lesssim {} & \frac{\sigma^2 }{\dimtwo} + \frac{ \sigma^2 \log(\dimtwo) \log(\dimone)^{1/3} \big[ \sqrt{\log(\dimone)} + V(\theta^\ast)/ \big(\sigma \sqrt{\log(\dimtwo) } \, \big) 			\big]^{2/3}}{ (\dimone \dimtwo)^{2/3} } \notag \\
	\lesssim {} & \frac{\sigma^2 }{\dimtwo} + \left( \frac{ \sigma^2 V(\theta^*)}{ \dimone \dimtwo } \right)^{2/3}  \log(\dimone)^{1/3} \log(\dimtwo)^{2/3}.
	\label{eq:dq}
\end{align}
If \( k^\ast < 1 \), we replace it by \( 1 \), increasing the \( k \log(\dimtwo)/(\dimone \dimtwo) \) term while decreasing the ones with \( 1/\sqrt{k} \) in \eqref{eq:do}, hence leading to the same rate as in \eqref{eq:dq}.
If \( k^\ast > \dimone \dimtwo \), note that the \( k/(\dimone \dimtwo) \) term is already of the order \( \sigma^2 \), so a basic bound of \( \sigma^2 \) on the empirical process term yields the rate
\begin{equation}
	\label{eq:jl}
	\frac{1}{\dimone \dimtwo} \| \thetals - \theta^* \|_F^2
	\le \sigma^2.
\end{equation}
Combined, this yields that with probability at least $1 - \exp( - \dimone)$, 
\begin{align}
	\frac{1}{\dimone \dimtwo} \| \thetals - \theta^* \|_F^2
	\lesssim \left[ \frac{\sigma^2 }{\dimtwo} + \left( \frac{ \sigma^2 V(\theta^*)}{ \dimone \dimtwo } \right)^{2/3} \log(\dimone)^{1/3} \log(\dimtwo)^{2/3} \right] \wedge \sigma^2 .
\end{align}

To obtain the bound in expectation, we can first integrate the exponentially decaying tale of~\eqref{eq:do}, and then choose the optimal $k$ in the same way. 


\subsection{Proof of Proposition \ref{prop:sup-id}}
\label{sec:pf-sup-id}




Our main strategy consists in decomposing the noise matrix $\gaussiannoise$ into three terms according to the spectral decomposition of the linear map $\cD$, defined by $\cD(A) = \Done A \Dtwo^\top$ for $A \in \R^{\dimone \times \dimtwo}$.

\paragraph{Spectral decomposition of the difference operator.}	
Denote the (reduced) singular value decomposition of \( \Done \) by $\Done =  U \Sigma W^\top$,
%
where we order the singular values in \( \Sigma \) in ascending magnitude.
In addition, we write $W = \left[
	\begin{array}{@{}c|c|c@{}}
	w_1 & \cdots & w_{\dimone-1}
\end{array}\right]$.

	First, let \( \Pi_1 \) denote the projection onto \( \ker \cD \).
Moreover, let \( J = \{ (l, r) \in [\dimone] \times [\dimtwo] : l r \leq k \} \) and \( J^c = [\dimone] \times [\dimtwo] \setminus J \). 
Define the projection $\Pi_2$ by
\begin{align}
	\label{eq:dr}
	\Pi_2 (A) = {} & \sum_{(l, r) \in J} w_l w_l^\top A \tilde w_r \tilde w_r^\top , \text{ and so }\\
	(I - \Pi_2) (A) = {} & \sum_{(l, r) \in J^c} w_l w_l^\top A \tilde w_r \tilde w_r^\top.
\end{align}
With these two projections, we decompose
\begin{align}
	\E[\sup_{\substack{\theta \in \cM\\ \|\theta - \theta^* \|_F \leq t}} \langle \gaussiannoise , \theta - \theta^* \rangle]
	\leq {} & \E[\sup_{\substack{\theta \in \cM\\ \|\theta - \theta^* \|_F \leq t}} \langle \Pi_1 (\gaussiannoise) , \theta - \theta^* \rangle]
	+ \E[\sup_{\substack{\theta \in \cM\\ \|\theta - \theta^* \|_F \leq t}} \langle  (I - \Pi_1) \Pi_2 (\gaussiannoise) , \theta - \theta^* \rangle] \notag \\
	{} & + \E[\sup_{\substack{\theta \in \cM\\ \|\theta - \theta^* \|_F \leq t}} \langle (I - \Pi_1) (I - \Pi_2) (\gaussiannoise) , \theta - \theta^* \rangle] .
	\label{eq:cd}
\end{align}
We now bound the three terms in \eqref{eq:cd} separately.

\paragraph{Bounding the first term in \eqref{eq:cd}.}
Recall that \( \Pi_1 \) be the projection onto \( \ker \cD \). We claim that $\dim ( \ker \cD ) = \dimone + \dimtwo - 1$. 
Given a matrix $\theta \in \ker \cD$, i.e., $\Done \theta \Dtwo^\top = 0$, we apply Lemma~\ref{lem:c} to obtain the unique decomposition $\theta = R+S+B$, where $R \Dtwo^\top = 0$ and $\Done S = 0$. It follows that $\Done B \Dtwo^\top = 0$. Since the first column and the first row of $B$ are both identically zero, it is easy to see from an inductive argument that $B = 0$ so that $\ker \cD$ contains only matrices of the form $\theta = R+S$. The set of constant-row matrices $R$ has dimension $\dimone$; the set of constant-column matrices $S$ with $S_{i, 1} = 0$ for $i \in [\dimone]$ has dimension $\dimtwo - 1$. 
Thus $\dim(\ker \mathcal{D}) = \dimone + \dimtwo - 1$.
Consequently, we have
\begin{align}
	\label{eq:dg}
	\E[\sup_{\substack{\theta \in \cM\\ \|\theta - \theta^* \|_F \leq t}} \langle \Pi_1 (\gaussiannoise) , \theta - \theta^* \rangle]
	\leq t \, \E[ \| \Pi_1 (\gaussiannoise) \|_F ] \le t \sqrt{n_1+n_2-1}\lesssim t \sqrt{\dimone} .
\end{align}

\paragraph{Bounding the second term in \eqref{eq:cd}.}
Similarly, it suffices to compute the rank of $\Pi_2$, which is bounded as follows
\begin{align}
	\label{eq:dw}
	|J| = \sum_{(l, r) \in J} 1
	\le {} & \sum_{r=1}^{\dimtwo}  k/r 
	\le k \log(\dimtwo).
\end{align}
Therefore, we obtain
\begin{align}
	\E[\sup_{\substack{\theta \in \cM\\ \|\theta - \theta^* \|_F \leq t}} \langle  (I - \Pi_1) \Pi_2 (\gaussiannoise) , \theta - \theta^* \rangle]
	\leq {} & t \E[ \| (I - \Pi_1) \Pi_2 (\gaussiannoise) \|_F ]
	\lesssim t \sqrt{k \log(\dimtwo)} .  \notag 
\end{align}

\paragraph{Bounding the third term in \eqref{eq:cd}.}
Note that $I - \Pi_1$ is the projection onto the image of the linear map $\mathcal{D}^\top$, defined by $\mathcal{D}^\top (A) = \Done^\top A \Dtwo$. Hence we have
\begin{align}
	\label{eq:ce}
	\langle (I - \Pi_1) (I - \Pi_2) (\gaussiannoise) , \theta - \theta^* \rangle
	= {} & \langle \Done^\top (\Done^\top)^\dag (I - \Pi_2) (\gaussiannoise) \Dtwo^\dag \Dtwo, \theta - \theta^* \rangle \\
	= {} & \langle (\Done^\dag)^\top (I - \Pi_2) (\gaussiannoise) \Dtwo^\dag, \Done (\theta - \theta^*) \Dtwo^\top \rangle . 
\end{align}
Since \( \gaussiannoise \) has mean zero, it is sufficient to control
\begin{align}
	\E[\sup_{\substack{\theta \in \mathcal{M}\\ \| \theta - \theta^* \|_F \leq t}}\langle (D^\dag)^\top (I - \Pi_2) (\gaussiannoise) \Dtwo^\dag, D \theta \Dtwo^\top \rangle]. \label{eq:s1}
\end{align}
To bound this quantity, we need the following lemma, whose proof is deferred to Section \ref{sec:pf-var-bd}.
	
	\begin{lemma} \label{lem:var-bd}
	For any $i \in [\dimone], \, j \in [\dimtwo]$, the quantity
	$
	[ (D^\dag)^\top (I - \Pi_2) (\gaussiannoise) (\tilde D^\dag) ]_{i, j}
	$
	is sub-Gaussian with variance proxy
	$$
	O \Big( \log (\dimtwo) \left[ \big( i \wedge (\dimone - i) \big) \big( j \wedge (\dimtwo - j) \big)  \wedge \frac{\dimone \dimtwo}{k} \right] \Big) .
	$$
	\end{lemma}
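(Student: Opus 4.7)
The plan is to re-express the $(i,j)$-entry as a linear functional of $\gaussiannoise$ and then control the squared Frobenius norm of the corresponding weight matrix by two competing estimates. Set $a_i \defn D^\dag e_i \in \R^{\dimone}$ and $b_j \defn \tilde D^\dag e_j \in \R^{\dimtwo}$, so that
$$
[(D^\dag)^\top (I - \Pi_2)(\gaussiannoise)\tilde D^\dag]_{i,j} = \langle (I - \Pi_2)(\gaussiannoise),\, a_i b_j^\top \rangle = \langle \gaussiannoise,\, (I - \Pi_2)(a_i b_j^\top)\rangle
$$
by self-adjointness of the projection $\Pi_2$. Since $\gaussiannoise$ has i.i.d.\ $\cN(0,1)$ entries, this is a centered Gaussian with variance proxy equal to $\|(I - \Pi_2)(a_i b_j^\top)\|_F^2$, and it suffices to bound that matrix-norm quantity.

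First, I would solve $D v = e_i$ directly: the minimum-norm solution is the step vector with $i$ leading entries equal to $-(\dimone - i)/\dimone$ and $\dimone - i$ trailing entries equal to $i/\dimone$. A direct computation gives $\|a_i\|_2^2 = i(\dimone - i)/\dimone \asymp i \wedge (\dimone - i)$, and symmetrically $\|b_j\|_2^2 \asymp j \wedge (\dimtwo - j)$. Since $I - \Pi_2$ is a contraction, this immediately yields
$$
\| (I - \Pi_2)(a_i b_j^\top) \|_F^2 \le \|a_i\|_2^2 \, \|b_j\|_2^2 \lesssim (i \wedge (\dimone - i))(j \wedge (\dimtwo - j)),
$$
accounting for the first branch of the claimed minimum.

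For the competing bound $\dimone \dimtwo / k$, I would use the closed-form spectral decomposition of the $\dimone \times \dimone$ discrete Neumann Laplacian $D^\top D$: its eigenvectors are rescaled cosines, so the right singular vectors are $(w_l)_k = \sqrt{2/\dimone}\cos(\pi l(k-1/2)/\dimone)$ with singular values $\sigma_l = 2\sin(\pi l/(2\dimone))$ for $l = 1,\dots,\dimone-1$, while the left singular vectors are the Dirichlet-Laplacian eigenvectors $(u_l)_k = \sqrt{2/\dimone}\sin(\pi l k/\dimone)$. Since $a_i = W \Sigma^{-1} U^\top e_i$, it follows that
$$
\langle a_i, w_l\rangle^2 = \frac{\sin^2(\pi l i/\dimone)}{2\dimone \sin^2(\pi l/(2\dimone))} \lesssim \frac{\dimone}{l^2},
$$
where the final bound uses $\sin(x) \ge 2x/\pi$ on $[0, \pi/2]$; analogously $\langle b_j, \tilde w_r\rangle^2 \lesssim \dimtwo/r^2$. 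Because the family $\{w_l \tilde w_r^\top\}$ is Frobenius-orthonormal, Parseval gives
$$
\|(I - \Pi_2)(a_i b_j^\top)\|_F^2 = \sum_{(l,r) \in J^c} \langle a_i, w_l\rangle^2\langle b_j, \tilde w_r\rangle^2 \lesssim \dimone \dimtwo \sum_{lr > k} \frac{1}{l^2 r^2}.
$$

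The main obstacle is this combinatorial tail sum. Splitting the range of $l$ at $k$ and using $\sum_{r > k/l} r^{-2} \lesssim l/k$ when $k/l \in [1, \dimtwo]$, the middle regime $\lceil k/\dimtwo \rceil \le l \le k$ contributes $\frac{1}{k}\sum_{l=\lceil k/\dimtwo\rceil}^{k} l^{-1} \asymp \log(\dimtwo)/k$, while $l > k$ contributes $O(1/k)$ and $l < k/\dimtwo$ contributes nothing. Combining this with the first bound and absorbing the $\log(\dimtwo)$ factor into the outer minimum delivers the advertised variance proxy.
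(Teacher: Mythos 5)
Your proof is correct and shares the paper's overall architecture: expand the $(i,j)$-entry as a linear functional of $\gaussiannoise$, identify the variance proxy as the squared Frobenius norm of the weight matrix, and control that quantity by two competing estimates balanced against each other. The second estimate (the $\log(\dimtwo)\,\dimone\dimtwo/k$ branch) is essentially identical to the paper's \eqref{eq:k-split}, up to choosing whether to organize the tail sum over $\ell$ or over $r$.

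Where you genuinely improve on the paper is the first branch. The paper's bound \eqref{eq:t5} controls the same spectral tail sum $\sum_{(\ell,r)\in J^c}\Sigma_{\ell,\ell}^{-2}\tilde\Sigma_{r,r}^{-2}U_{i,\ell}^2\tilde U_{j,r}^2$ by dropping the constraint $(\ell,r)\in J^c$, splitting at $\ell r\le\dimone\dimtwo/(ij)$, applying $\sin(x)\le x$ on the near-zone, and re-using the other branch's estimate on the far-zone; this produces an extra $\log(\dimtwo)$ factor and is the longest part of the argument. You bypass the spectral decomposition entirely on this branch by observing (a) that $I-\Pi_2$ is an orthogonal projection, hence a Frobenius contraction, and (b) that $a_i=D^\dag e_i$ is the centered step vector with $\|a_i\|_2^2 = i(\dimone-i)/\dimone\asymp i\wedge(\dimone-i)$, giving $\|(I-\Pi_2)(a_ib_j^\top)\|_F^2\le\|a_i\|_2^2\|b_j\|_2^2\lesssim(i\wedge(\dimone-i))(j\wedge(\dimtwo-j))$ in one line. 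This is cleaner, more elementary, and even sharper — it removes the $\log(\dimtwo)$ factor from this branch, which still sits comfortably inside the claimed bound. One cosmetic remark: the lemma (and your proof) implicitly treats $i\in[\dimone-1]$, $j\in[\dimtwo-1]$, since $(D^\dag)^\top(I-\Pi_2)(\gaussiannoise)\Dtwo^\dag$ has dimensions $(\dimone-1)\times(\dimtwo-1)$; the explicit formula for $W$ you supply is never used and can be dropped.
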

	
	Let us define $\Phi \in \R^{(\dimone - 1) \times (\dimtwo - 1)}$ by
		\begin{align}
			\label{eq:cp}
			\Phi_{i, j} = \sqrt{\log(\dimone) \log (\dimtwo)} \left[ \left( \sqrt{i} \wedge \sqrt{\dimone - i} \right) \left( \sqrt{j} \wedge \sqrt{\dimtwo - j} \right) \wedge \sqrt{ \frac{\dimone \dimtwo}{k} } \, \right] ,
		\end{align}
	and let \( \oslash \) denote element-wise division.
	Lemma~\ref{lem:var-bd}, together with a union bound  readily yields
	\begin{align}
		\label{eq:cz}
		\E[\| (D^\dag)^\top (I - \Pi_2) (\gaussiannoise) \Dtwo^\dag \oslash \Phi \|_\infty] \lesssim 1.
	\end{align}
							
	In addition, it holds for every \( \theta \) that
	\begin{align}
		\langle (D^\dag)^\top (I - \Pi_2) (\gaussiannoise) \Dtwo^\dag, D \theta \Dtwo^\top \rangle
		= {} & \langle (D^\dag)^\top (I - \Pi_2) (\gaussiannoise) \Dtwo^\dag \oslash \Phi, \Phi \odot D \theta \Dtwo^\top \rangle  \notag \\
		\leq {} & \| (D^\dag)^\top (I - \Pi_2) (\gaussiannoise) \Dtwo^\dag \oslash \Phi \|_\infty \| \Phi \odot D \theta \Dtwo^\top \|_1  \notag 
	\end{align}
	by H\"older's inequality. 
We therefore obtain
\begin{align}
	& \E[\sup_{\substack{\theta \in \mathcal{M}\\ \| \theta - \theta^* \|_F \leq t}}\langle (D^\dag)^\top (I - \Pi_2) (\gaussiannoise) \Dtwo^\dag, D (\theta - \theta^*) \Dtwo^\top \rangle] \\
	& \le \E[\| (D^\dag)^\top (I - \Pi_2) (\gaussiannoise) \Dtwo^\dag \oslash \Phi \|_\infty] \sup_{\substack{\theta \in \mathcal{M}\\ \| \theta - \theta^* \|_F \leq t}} \| \Phi \odot D \theta \Dtwo^\top \|_1 \\
	&\lesssim \sup_{\substack{\theta \in \mathcal{M}\\ \| \theta - \theta^* \|_F \leq t}} \| \Phi \odot (D \theta \Dtwo^\top) \|_1 . \quad \label{eq:s3}
\end{align}

It remains to bound this supremum. For \( \theta \in \mathcal{M} \), because \( D \theta \Dtwo^\top \geq 0 \) we can write
\begin{align}
	\| \Phi \odot (D \theta \Dtwo^\top) \|_1
	= \langle \Phi , D \theta \Dtwo^\top \rangle
	= \langle \Phi , D (\theta - \theta^*) \Dtwo^\top \rangle + \langle \Phi , D \theta^* \Dtwo^\top \rangle .
	\label{eq:cl}
\end{align}
The second term in \eqref{eq:cl} can be bounded by 
\begin{align}
	\label{eq:cm}
	\langle \Phi , D \theta^* \Dtwo^\top \rangle
	\leq \| \Phi \|_\infty \| D \theta^* \Dtwo^\top \|_1
	\lesssim \sqrt{\frac{\dimone \dimtwo }{k}} \sqrt{\log (\dimone) \log (\dimtwo)} \, V(\theta^*).
\end{align}
For the first term in \eqref{eq:cl}, we need the following lemma, whose proof is deferred to Section \ref{sec:pf-phi-fro}.

\begin{lemma}
	\label{lem:phi-fro}
We have the estimate
$$
\| D^\top \Phi \Dtwo \|_F^2 \lesssim \log (\dimone) \log (\dimtwo) \sqrt{ \frac{\dimone \dimtwo}{k} }  + \log^2 (\dimone) \log^2 (\dimtwo) .
$$
\end{lemma}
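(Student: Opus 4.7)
The first step identifies $M \defn D^\top \Phi \tilde D$ with the mixed second-order difference of $\Phi$: a direct expansion using the definitions of $D$ and $\tilde D$ gives
\begin{equation*}
	M_{i,j} = \Phi_{i-1,j-1} - \Phi_{i-1,j} - \Phi_{i,j-1} + \Phi_{i,j}, \qquad i \in [\dimone],\; j \in [\dimtwo],
\end{equation*}
under the convention $\Phi_{i,j} = 0$ whenever $i \in \{0, \dimone\}$ or $j \in \{0, \dimtwo\}$. Writing $\Phi_{i,j} = c_0 \min(f(i) g(j), C)$ with $c_0 = \sqrt{\log(\dimone) \log(\dimtwo)}$, $f(i) = \sqrt{i \wedge (\dimone - i)}$, $g(j) = \sqrt{j \wedge (\dimtwo - j)}$, and $C = \sqrt{\dimone \dimtwo/k}$, the plan is to classify the cells $(i,j)$ according to how many of their four corners $(i', j') \in \{i-1, i\} \times \{j-1, j\}$ are \emph{capped}, i.e.\ satisfy $f(i') g(j') \ge C$.

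If all four corners are capped, $\Phi = c_0 C$ on all four and $M_{i,j} = 0$. If no corner is capped, $\Phi = c_0 fg$ at each, and $M_{i,j} = c_0 \Delta f(i) \Delta g(j)$ with $\Delta f(i) = f(i) - f(i-1)$; the factorized sum gives
\begin{equation*}
	\sum_{\text{no corner capped}} M_{i,j}^2 \le c_0^2 \Big(\sum_i (\Delta f(i))^2\Big) \Big(\sum_j (\Delta g(j))^2\Big) \lesssim c_0^2 \log(\dimone) \log(\dimtwo) = \log^2(\dimone) \log^2(\dimtwo),
\end{equation*}
since $\Delta \sqrt{i} \asymp 1/\sqrt{i}$ for $i \ge 2$ gives $\sum_{i=1}^{\dimone} (\Delta f(i))^2 \lesssim \log \dimone$ up to $O(1)$ boundary contributions at $i \in \{1, \dimone\}$, and similarly for $g$. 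This matches the second term of the claim.

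The bulk of the work concerns the \emph{boundary cells}, where capping status is mixed. By symmetry ($i \mapsto \dimone - i$, $j \mapsto \dimtwo - j$) it suffices to treat the quadrant $i \le \dimone/2$, $j \le \dimtwo/2$, where the capping curve is the hyperbola $ij = C^2$. A direct case analysis of the three possible mixed configurations (three-and-one or two-and-two capped corners) shows that the defect $C - f(i') g(j')$ at the relevant uncapped corner can be bounded either by $\Delta f(i) \, g(j-1) \lesssim \sqrt{j/i}$ or by $f(i-1) \, \Delta g(j) \lesssim \sqrt{i/j}$, whichever is available from the configuration, yielding
\begin{equation*}
	M_{i,j}^2 \lesssim c_0^2 \min\!\Big(\frac{i}{j}, \frac{j}{i}\Big)
\end{equation*}
at every boundary cell in this quadrant.

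It remains to sum this bound along the hyperbola. Parametrize by $i$ and split at $i = C$, where the slope $|dj/di| = C^2/i^2$ crosses $1$. For $i \ge C$ the curve is flat, giving $O(1)$ boundary cells per column with per-cell bound $c_0^2 C^2/i^2$, whose total is $\sum_{C \le i \le \dimone/2} c_0^2 C^2/i^2 \lesssim c_0^2 C$. For $i < C$ the curve is steep, giving $O(C^2/i^2)$ cells per column with per-cell bound $c_0^2 i^2/C^2$, so each column contributes $O(c_0^2)$ and the total is again $O(c_0^2 C)$. Accumulating the four quadrants produces $\sum_{\text{boundary}} M_{i,j}^2 \lesssim c_0^2 C = \log(\dimone) \log(\dimtwo) \sqrt{\dimone \dimtwo/k}$, matching the first term of the claim. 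The main obstacle is the per-cell estimate at boundary cells: the naive bound $|M_{i,j}| \le 4 c_0 C$ is much too weak, and the factor $\sqrt{\dimone \dimtwo/k}$ only emerges after observing that at a boundary cell, the defect $C - fg$ at the neighbouring uncapped corner is controlled by a single one-step gradient of $fg$, which must then be paired with a slope-sensitive count of boundary cells along the hyperbola.
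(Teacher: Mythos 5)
Your proof is correct and follows essentially the same route as the paper's: both express $\|D^\top\Phi\Dtwo\|_F^2$ as the sum of squared mixed second differences of $\Phi$, pull out the $\log(\dimone)\log(\dimtwo)$ factor, classify cells by capping status, bound the uncapped region via the factorized sum of $(\Delta f)^2(\Delta g)^2$, and bound each boundary cell by $c_0^2\min(i/j,j/i)$ before summing along the level curve $ij\approx\dimone\dimtwo/k$. The only cosmetic difference is the parametrization of the boundary sum (you split at $i\lessgtr\sqrt{\dimone\dimtwo/k}$ whereas the paper splits at $i\lessgtr j$), which is an equivalent bookkeeping of the same count.
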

\noindent If $\|\theta - \theta^*\|_F \le t$, then the above lemma together with the Cauchy-Schwarz inequality yields
\begin{align}
	\label{eq:cn}
	{}& \langle \Phi , D (\theta - \theta^*) \Dtwo^\top \rangle \\
	= {} & \langle D^\top \Phi \Dtwo , \theta - \theta^* \rangle
	\leq \| D^\top \Phi \Dtwo \|_F \| \theta - \theta^* \|_F \\
	\lesssim {} & t \left[ \sqrt{ \log (\dimone) \log (\dimtwo) } \left( \frac{\dimone \dimtwo}{k} \right)^{1/4}  + \log (\dimone) \log (\dimtwo) \right] .
\end{align}

Combining~\eqref{eq:s1}--\eqref{eq:cn}, we conclude that
\begin{align} 
& \E[\sup_{\substack{\theta \in \cM\\ \|\theta - \theta^* \|_F \leq t}} \langle (I - \Pi_1) (I - \Pi_2) (\gaussiannoise) , \theta - \theta^* \rangle]  \notag \\
& \lesssim \sqrt{\frac{\dimone \dimtwo }{k}} \sqrt{\log (\dimone) \log (\dimtwo)} \, V(\theta^*) 
+ t \left[ \sqrt{ \log (\dimone) \log (\dimtwo) } \left( \frac{\dimone \dimtwo}{k} \right)^{1/4}  + \log (\dimone) \log (\dimtwo) \right] . \notag 
\end{align}

%
The bounds on the three terms of \eqref{eq:cd} together yield the desired result.

\subsection{Proof of Lemma~\ref{lem:var-bd}}
\label{sec:pf-var-bd}

By  definition of $\Pi_2$, it holds that
\begin{align}
	\label{eq:dt}
	(\Done^\dag)^\top (I - \Pi_2) (\gaussiannoise) \Dtwo^\dag
	= {} & \sum_{(\ell, r) \in J^c} U \Sigma^\dag W^\top w_\ell w_\ell^\top \gaussiannoise \tilde w_r \tilde w_r^\top \tilde W \tilde \Sigma^\dag \tilde U^\top\\
	= {} & \sum_{(\ell, r) \in J^c} ( w_\ell^\top \gaussiannoise \tilde w_r )  U \Sigma^\dag e_\ell \tilde e_r^\top \tilde \Sigma^\dag \tilde U^\top\\
	= {} & \sum_{(\ell, r) \in J^c} ( w_\ell^\top \gaussiannoise \tilde w_r ) \Sigma_{\ell, \ell}^{-1} \tilde \Sigma_{r, r}^{-1} U e_\ell \tilde e_r^\top \tilde U^\top\\
	= {} & \sum_{(\ell, r) \in J^c} ( w_\ell^\top \gaussiannoise \tilde w_r ) \Sigma_{\ell, \ell}^{-1} \tilde \Sigma_{r, r}^{-1} U_{\cdot, \ell} \tilde U_{\cdot, r}^\top . \label{eq:t1}
\end{align}
We now study the sub-Gaussianity of the $(i, j)$-th entry of this quantity.
Since $\gaussiannoise$ has i.i.d. $\cN(0, 1)$ entries, it holds for each $\lambda > 0$ that
\begin{align}
\E \exp \Big( \lambda \sum_{(\ell, r) \in J^c} ( w_\ell^\top \gaussiannoise \tilde w_r ) \Sigma_{\ell, \ell}^{-1} \tilde \Sigma_{r, r}^{-1} U_{i, \ell} \tilde U_{j, r} \Big)
&= \E \exp \Big( \lambda \sum_{(\ell, r) \in J^c} \tr(\gaussiannoise \tilde w_r w_\ell^\top ) \Sigma_{\ell, \ell}^{-1} \tilde \Sigma_{r, r}^{-1} U_{i, \ell} \tilde U_{j, r} \Big)  \notag \\
&= \E \exp \Big\{ \tr \Big[ \gaussiannoise  \Big( \lambda \sum_{(\ell, r) \in J^c}  \Sigma_{\ell, \ell}^{-1} \tilde \Sigma_{r, r}^{-1} U_{i, \ell} \tilde U_{j, r} \tilde w_r  w_\ell^\top \Big) \Big] \Big\} \\
&\le \exp \Big\{ \frac{\lambda^2}{2} \Big\| \sum_{(\ell, r) \in J^c}  \Sigma_{\ell, \ell}^{-1} \tilde \Sigma_{r, r}^{-1} U_{i, \ell} \tilde U_{j, r} \tilde w_r  w_\ell^\top \Big\|_F^2 \Big\} . \label{eq:t2}
\end{align}
Note that $ \| \tilde w_r  w_\ell^\top \|_F = 1$, and	
$\langle \tilde w_r  w_\ell^\top, \tilde w_{r'}  w_{\ell'}^\top \rangle = 0$ for any pairs $(r, \ell) \ne (r', \ell')$, so we have
\begin{align}
\Big\| \sum_{(\ell, r) \in J^c}  \Sigma_{\ell, \ell}^{-1} \tilde \Sigma_{r, r}^{-1} U_{i, \ell} \tilde U_{j, r} \tilde w_r  w_\ell^\top \Big\|_F^2 = \sum_{(\ell, r) \in J^c} \Sigma_{\ell, \ell}^{-2} \tilde \Sigma_{r, r}^{-2} U_{i, \ell}^2 \tilde U_{j, r}^2. \label{eq:t3}
\end{align}

It remains to bound this quantity. 
Without loss of generality, assume  that \( \dimone \) is odd, so \( \dimone - 1 \) is even.
The matrix \( \Done \) has the same left-singular vectors as
	\begin{align}
		\label{eq:ct}
		\Done \Done^\top =
		\begin{bmatrix}
			2 & -1 & 0 & \dots & 0 & 0\\
			-1 & 2 & -1 & \dots & 0 & 0\\
			\vdots & & \ddots & & \vdots \\
			0 & 0 & \dots & -1 & 2 & -1\\
			0 & 0 & \dots & 0 & -1 & 2
		\end{bmatrix},
	\end{align}
	which are known~\cite{Str07} to be
	\begin{align}
		\label{eq:cu}
		U_{i, j} = \sqrt{\frac{2}{\dimone}} \sin \left( \frac{\pi i j}{\dimone} \right), \quad i, j = 1, \dots, \dimone-1.
	\end{align}
	Moreover, the matrix $D$ has (non-zero) singular values
	\begin{align}
		\label{eq:cv}
		\Sigma_{i, i} = 2 \left| \sin \left( \frac{\pi i}{2 \dimone} \right) \right|, \quad i = 1, \dots, \dimone-1 .
	\end{align}
	Note that because of the symmetry
	\begin{align}
		\label{eq:da}
		\sin \left( \frac{\pi i j}{\dimone} \right)
		= \sin \left( \frac{\pi j (\dimone - i)}{\dimone} \right), \quad i = 1, \dots, \dimone-1,
	\end{align}
	it is enough to consider \( i = 1, \dots, \frac{\dimone-1}{2} \).
	We make use of the following inequalities to control the \( \sin \) terms involved:
	\begin{alignat}{2}
		| \sin(x) | \leq {} & 1, \quad && \text{for all } x \in \R; \label{eq:di}
\\
		\sin(x) \leq {} & x, \quad && \text{for } x \in [0, \infty); \label{eq:jp}\\
		\sin(x) \geq {} & \frac{2}{\pi} x \geq \frac{1}{2} x, \quad && \text{for } x \in [0, \frac{\pi}{2}]. \label{eq:jq}
	\end{alignat}
	
Plugging in the entries of $U$ and $\Sigma$ yields
\begin{align}
	\sum_{(\ell, r) \in J^c} \Sigma_{\ell, \ell}^{-2} \tilde \Sigma_{r, r}^{-2} U_{i, \ell}^2 \tilde U_{j, r}^2
	= {} & \sum_{(\ell, r) \in J^c} \frac{4 \sin \left( \frac{\pi i \ell}{\dimone} \right)^2 \sin \left( \frac{\pi j r}{\dimtwo} \right)^2}{16 \dimone \dimtwo \sin \left( \frac{\pi \ell}{2 \dimone} \right)^2 \sin \left( \frac{\pi r}{2 \dimtwo} \right)^2} \notag \\
	\stackrel{(i)}{\lesssim} {} & \frac{1}{\dimone \dimtwo} \sum_{(\ell, r) \in J^c} \frac{\dimone^2 \dimtwo^2}{\ell^2 r^2}\\
	\lesssim {} & \dimone \dimtwo \sum_{r=1}^{\dimtwo} \left( \frac{1}{r^2} \sum_{\ell = \lceil k/r \rceil}^{\dimone} \frac{1}{\ell^2} \right)\\
	\lesssim {} & \dimone \dimtwo \sum_{r=1}^{\dimtwo} \left( \frac{1}{r^2} \sum_{\ell = \lceil k/r \rceil + 1}^{\dimone} \frac{1}{\ell^2}\right) + \dimone \dimtwo \sum_{r=1}^{\dimtwo} \frac{1}{r^2} \frac{1}{\lceil k/r \rceil^2}\\
	\stackrel{(ii)}{\lesssim} {} & \dimone \dimtwo \sum_{r=1}^{\dimtwo} \frac{1}{r^2} \frac{r}{k} + \dimone \dimtwo \sum_{r=1}^{k} \frac{1}{k^2} + \dimone \dimtwo \sum_{r=k+1}^{\dimtwo} \frac{1}{r^2} \\
	\stackrel{(iii)}{\lesssim} {} & \frac{\dimone \dimtwo}{k} \log (\dimtwo) + \frac{\dimone \dimtwo}{k} + \frac{\dimone \dimtwo}{k} \lesssim \frac{\dimone \dimtwo}{k} \log (\dimtwo) , \label{eq:k-split}
\end{align}
where we used \eqref{eq:di} on the numerator and \eqref{eq:jq} on the denominator in \( (i) \) and the bound $\sum_{r=k+1}^\infty \frac{1}{r^2} \le \frac{1}{k}$ for any $k \ge 1$ in \( (ii) \) and \( (iii) \).

On the other hand, 
even without using the constraint $(\ell, r) \in J^c$, 
we have
\begin{align}
	\sum_{(\ell, r) \in J^c} \Sigma_{\ell, \ell}^{-2} \tilde \Sigma_{r, r}^{-2} U_{i, \ell}^2 \tilde U_{j, r}^2
	\lesssim {} & \sum_{\ell r \le \frac{\dimone \dimtwo}{i j} } \frac{\sin \left( \frac{\pi i \ell}{\dimone} \right)^2 \sin \left( \frac{\pi j r}{\dimtwo} \right)^2}{\dimone \dimtwo \sin \left( \frac{\pi \ell}{2 \dimone} \right)^2 \sin \left( \frac{\pi r}{2 \dimtwo} \right)^2}
	+ \sum_{\ell r > \frac{\dimone \dimtwo}{i j} } \frac{\sin \left( \frac{\pi i \ell}{\dimone} \right)^2 \sin \left( \frac{\pi j r}{\dimtwo} \right)^2}{\dimone \dimtwo \sin \left( \frac{\pi \ell}{2 \dimone} \right)^2 \sin \left( \frac{\pi r}{2 \dimtwo} \right)^2}  \notag  \\
	\stackrel{(i)}{\lesssim} {} & \sum_{\ell r \le \frac{\dimone \dimtwo}{i j}} \frac{ (i \ell j r)^2 }{ \dimone \dimtwo (\ell r)^2 } + \frac{\dimone \dimtwo i j}{\dimone \dimtwo} \log (\dimtwo)  \notag \\
	\lesssim {} & \sum_{\ell r \le \frac{\dimone \dimtwo}{i j}} \frac{ (i j )^2 }{ \dimone \dimtwo } + i j \log (\dimtwo)  \notag \\
	\stackrel{(ii)}{\lesssim} {} & \frac{ (i j )^2 }{ \dimone \dimtwo } \frac{\dimone \dimtwo}{i j} \log (\dimtwo) + i j \log (\dimtwo) \lesssim i j \log (\dimtwo),  \label{eq:t5}
\end{align}
where in \( (i) \), we used \eqref{eq:jp} for the numerator and \eqref{eq:jq} for the denominator as well as \eqref{eq:k-split} with $k$ replaced by $\frac{\dimone \dimtwo}{ i j }$, and \( (ii) \) follows from counting integer points in the set $\{(\ell, r): \ell r \le \frac{\dimone \dimtwo}{i j}\}$ as in \eqref{eq:dw}.

A similar argument yields bounds with $i$ replaced by $\dimone - i$, or $j$ replaced by $\dimtwo - j$. Combining this observation with~\eqref{eq:t1},~\eqref{eq:t2},~\eqref{eq:t3},~\eqref{eq:k-split} and~\eqref{eq:t5} completes the proof.

\subsection{Proof of Lemma~\ref{lem:phi-fro}}
\label{sec:pf-phi-fro}

Define $\Phi' = \frac{\Phi}{\sqrt{\log(\dimone) \log(\dimtwo)}}$, i.e.,
$$
\Phi'_{i, j} = \left( \sqrt{i} \wedge \sqrt{\dimone - i} \right) \left( \sqrt{j} \wedge \sqrt{\dimtwo - j} \right) \wedge \sqrt{ \frac{\dimone \dimtwo}{k} } .
$$
To simplify the notation, let $\Phi'_{i, 0} = \Phi'_{i, \dimtwo} = \Phi'_{0, j} = \Phi'_{\dimone, j} = 0$ for all $i \in [\dimone], \, j \in [\dimtwo]$.
We need to bound
$$
\| \Done^\top \Phi' \Dtwo \|_F^2 = \sum_{i=1}^{\dimone} \sum_{j=1}^{\dimtwo} ( \Phi'_{i-1, j-1} + \Phi'_{i, j} - \Phi'_{i-1, j} - \Phi'_{i, j-1} )^2 . 
$$ 
By symmetry, it suffices to consider summation over $i \in [\frac{\dimone - 1}{2}], \, j \in [\frac{\dimtwo-1}{2}]$ where $\Phi' = \sqrt{i j} \land \sqrt{\frac{\dimone \dimtwo}{k}}$. Moreover, note that the summand vanishes if $(i-1)(j-1) \ge \frac{\dimone \dimtwo}{k}$. Hence we can further split the sum into two parts: 
\begin{enumerate}
\item
over $\{ i \in [\frac{\dimone - 1}{2}], \, j \in [\frac{\dimtwo-1}{2}] : (i-1)(j-1) < \frac{\dimone \dimtwo}{k} < i j \}$, and
\item
over $\{ i \in [\frac{\dimone - 1}{2}], \, j \in [\frac{\dimtwo-1}{2}] : i j \le \frac{\dimone \dimtwo}{k} \}$.
\end{enumerate}

To bound the first part of the sum, first consider the case $i \le j$, where we have
$$
(i-1)(j-1) < (i-1) j \le i (j-1) < i j .
$$
Adjusting signs to ensure both differences in the following expression are positive, it is easily checked that
\begin{align*}
\left( \Phi'_{i-1, j-1} + \Phi'_{i, j} - \Phi'_{i-1, j} - \Phi'_{i, j-1} \right)^2 \le \left[ \sqrt{(i-1) j} - \sqrt{(i-1)(j-1)} + \sqrt{i j} - \sqrt{i (j-1)} \right]^2 
\lesssim i/j .
\end{align*}
By the conditions $(i-1)(j-1) < \frac{\dimone \dimtwo}{k} < i j$ and $i \le j$, we obtain
\begin{align*}
& i < \sqrt{\frac{\dimone \dimtwo}{k}} + 1, \\
& \frac{\dimone \dimtwo}{k i} < j < \frac{\dimone \dimtwo}{k (i-1)} + 1 , \text{ and} \\
& i/j < \frac{i^2 k}{\dimone \dimtwo} . 
\end{align*}
Therefore, it remains to bound
\begin{align*}
\sum_{i=1}^{\sqrt{ \frac{\dimone \dimtwo}{k} } } \sum_{j = \frac{\dimone \dimtwo}{k i}}^{\frac{\dimone \dimtwo}{k (i-1)}} \frac{i^2 k}{\dimone \dimtwo}
\lesssim \sum_{i=1}^{\sqrt{ \frac{\dimone \dimtwo}{k} } } \frac{\dimone \dimtwo}{k i^2} \frac{i^2 k}{\dimone \dimtwo} \le \sqrt{ \frac{\dimone \dimtwo}{k} } .
\end{align*}
An analogous argument yields the same bound for the case $i > j$.

Next, we consider the sum of $\big( \Phi'_{i-1, j-1} + \Phi'_{i, j} - \Phi'_{i-1, j} - \Phi'_{i, j-1} \big)^2$ over $(i, j)$ such that $i j \le \frac{\dimone \dimtwo}{k}$, where we have
\begin{align}
	\left( \Phi'_{i-1, j-1} + \Phi'_{i, j} - \Phi'_{i-1, j} - \Phi'_{i, j-1} \right)^2 
	& = \left( \sqrt{(i-1)(j-1)} + \sqrt{i j} - \sqrt{(i-1) j} - \sqrt{i (j-1)} \right)^2  \notag \\
	& = \left( \sqrt{i} - \sqrt{i-1} \right)^2 \left( \sqrt{j} - \sqrt{j-1} \right)^2 .  \notag 
\end{align}
Now even summing over all indices $i \in [\dimone], \, j \in [\dimtwo]$ yields
\begin{align}
	\sum_{i=1}^{\dimone} \sum_{j = 1}^{\dimtwo} \left( \sqrt{i} - \sqrt{i-1} \right)^2 \left( \sqrt{j} - \sqrt{j-1} \right)^2
	\lesssim \sum_{i=1}^{\dimone} \sum_{j = 1}^{\dimtwo} \frac{1}{i} \frac{1}{j} 
	\lesssim \log (\dimone) \log (\dimtwo).
\end{align}

Combining all the pieces, we obtain
\begin{align}
\| \Done^\top \Phi \Dtwo \|_F^2 
&= \log (\dimone) \log (\dimtwo) \| \Done^\top \Phi' \Dtwo \|_F^2 \\
&\lesssim \log (\dimone) \log (\dimtwo) \left( \sqrt{ \frac{\dimone \dimtwo}{k} }  + \log (\dimone) \log (\dimtwo) \right) .
\end{align}

\subsection{Proof of Theorem \ref{thm:lower_bound}}


First, consider the set $\cC$ of constant-row matrices, which is a subset of $\cM$. Note that $V(\theta^*) = 0$ for any $\theta^* \in \cC$. Since there are $\dimone$ rows, it is not hard to see that the minimax rate of estimation over $\cC$ is $\sigma^2 \, \dimone$ in the squared Frobenius norm. The lower bound of order $\sigma^2/\dimtwo$ then follows from normalization by \( \dimone \dimtwo \).

Next, we turn to the second term in the lower bound, which is based on Assouad's Lemma (Theorem \ref{thm:assouad}).
To this end, we construct an embedding of the hypercube into \( \cM_{V_0} \).

Consider integers $k_1 \in [\dimone]$ and $k_2 \in [\dimtwo]$, and let $m_1 = \dimone/k_1$ and $m_2 =  \dimtwo/k_2$.
Assume without loss of generality that \( m_1 \) and \( m_2 \) are integer.
Denote the elements of the hypercube $\{-1, 1\}^{k_1 \times k_2}$ by $(\tau_{u,v}: (u, v)\in [k_1]\times [k_2])$. For each $\tau\in\{-1,1\}^{k_1 \times k_2}$,  define $\theta^\tau\in\mathcal M$ in the following way. For $i\in[\dimone]$ and $j\in[\dimtwo]$, first  identify the unique $u\in[k_1],v\in[k_2]$ for which $(u-1)m_1<i\leq um_1$ and $(v-1)m_2<j\leq vm_2$, and then  take
$$\theta_{i, j}^\tau = V_0\left(\frac{uv}{2k_1k_2} + \frac{\tau_{u,v}}{8k_1k_2}\right).$$

First, we check that $\theta^\tau\in\mathcal M$ and that $V(\theta^\tau)\leq V_0$. For the former, it is enough to check that for each $1\leq i < \dimone, 1\leq j <\dimtwo$, we have that $\theta^\tau_{i, j} + \theta^\tau_{i+1,j+1}-\theta^\tau_{i,j+1}-\theta^\tau_{i+1,j} \geq 0$.
We distinguish the following two cases:
\begin{enumerate}
\item 
There exists $u\in[k_1]$ such that $(u-1)m_1<i<i+1\leq um_1$, or there exists $v\in[k_2]$ such that $(v-1)m_2<j<j+1\leq vm_2$. Then, either $\theta^\tau_{i, j} = \theta^\tau_{i+1,j}, \,  \theta^\tau_{i,j+1}=\theta^\tau_{i+1,j+1}$ or $\theta^\tau_{i, j} = \theta^\tau_{i,j+1}, \,  \theta^\tau_{i+1,j}=\theta^\tau_{i+1,j+1}$ respectively. In both cases, the difference above is $0$.

\item 
There exist $u\in[k_1],v\in[k_2]$ such that $(u-1)m_1<i\leq um_1<i+1\leq (u+1)m_1$ and $(v-1)m_2<j\leq vm_2<j+1\leq (v+1)m_2$. In this case, for any \( \tau \), we have
\begin{align*}
&\quad \  \theta^\tau_{i, j} + \theta^\tau_{i+1,j+1}-\theta^\tau_{i,j+1}-\theta^\tau_{i+1,j} \\
&= V_0\left(\frac{uv + (u+1)(v+1) - u(v+1)-(u+1)v}{2k_1k_2}+\frac{\tau_{u,v}+\tau_{u+1,v+1}-\tau_{u,v+1}-\tau_{u+1,v}}{8k_1k_2}\right) \\
& \geq V_0\left(\frac1{2k_1k_2} - \frac4{8k_1k_2} \right)= 0.
\end{align*}
\end{enumerate}
Thus, $\theta^\tau\in\mathcal M$. We now check that $V(\theta^\tau)\leq V_0$. Note that $V(\theta^\tau)$ can be written as the sum $\sum_{i,j}(\theta^\tau_{i, j} + \theta^\tau_{i+1,j+1}-\theta^\tau_{i,j+1}-\theta^\tau_{i+1,j})$. As we have seen above, this sum is nonzero in only $(k_1-1)(k_2-1)$ cases, and it equals exactly
\begin{align}
V(\theta^\tau) &= \sum_{u\in[k_1-1],v\in[k_2-1]} V_0\left(\frac1{2k_1k_2}+\frac{\tau_{u,v}+\tau_{u+1,v+1}-\tau_{u,v+1}-\tau_{u+1,v}}{8k_1k_2}\right) \\
&= V_0\left(\frac{(k_1-1)(k_2-1)}{2k_1k_2} + \frac{\tau_{11} + \tau_{k_1k_2} - \tau_{1k_2} - \tau_{k_11}}{8k_1k_2}\right) \\
&\leq V_0\left(\frac{(k_1-1)(k_2-1)}{2k_1k_2} + \frac4{8k_1k_2}\right) = V_0\frac{(k_1-1)(k_2-1)+1}{2k_1k_2}\leq V_0.
\end{align}

We now proceed to show our lower bound using Theorem~\ref{thm:assouad}, which states that
$$\underset{\tilde\theta}{\inf}\underset{\theta\in \cM_{V_0}}{\sup} R(\tilde\theta, \theta^*)\geq \frac d8 \, \underset{\tau\neq\tau'}{\min}\frac{\ell^2(\theta^{\tau}, \theta^{\tau'})}{d_H(\tau, \tau')} \, \underset{d_H(\tau, \tau') = 1}{\min}(1 - \|\mathbb P_{\theta^\tau} - \mathbb P_{\theta^{\tau'}}\|_{TV}),$$
where $d_H$ denotes the Hamming distance and  $\ell^2 (\theta, \theta') = \frac{1}{\dimone \dimtwo} \|\theta - \theta'\|_F^2$.
Note that
\begin{align*}
\ell^2(\theta^\tau, \theta^{\tau'}) &=\frac1{\dimone\dimtwo} \sum_{i,j}(\theta^\tau_{i, j}-\theta^{\tau'}_{i, j})^2 \\
&= \frac1{\dimone\dimtwo}\sum_{u\in[k_1],v\in[k_2]} \sum_{(u-1)m_1<i\leq um_1}\sum_{(v-1)m_2<j\leq vm_2}(\theta^{\tau}_{i, j} - \theta^{\tau'}_{i,j})^2 \\
&=\frac{V_0^2}{\dimone\dimtwo}\sum_{u\in[k_1],v\in[k_2]} \sum_{(u-1)m_1<i\leq um_1}\sum_{(v-1)m_2<j\leq vm_2}\left(\frac{\tau_{u,v} - \tau'_{u,v}}{8k_1k_2}\right)^2 \\
&= \frac{V_0^2}{\dimone\dimtwo}\sum_{u\in[k_1],v\in[k_2]}\frac{m_1m_2(\tau_{u,v}-\tau'_{u,v})^2}{64k_1^2k_2^2}=\frac{V_0^2m_1m_2}{16\dimone\dimtwo k_1^2k_2^2}d_H(\tau, \tau') = \frac{V_0^2}{16k_1^3k_2^3}d_H(\tau, \tau').
\end{align*}
Thus, we have
$$\underset{\tau\neq\tau'}{\min}\frac{\ell^2(\theta^{\tau}, \theta^{\tau'})}{d_H(\tau, \tau')} = \frac{V_0^2}{16k_1^3k_2^3}.$$

To bound $\|\mathbb P_{\theta^\tau} - \mathbb P_{\theta^{\tau'}}\|_{TV}$, we use Pinsker's inequality:
$$\|\mathbb P_{\theta^\tau} - \mathbb P_{\theta^{\tau'}}\|^2_{TV}\leq \frac12D(\mathbb P_{\theta^\tau}\|\mathbb P_{\theta^{\tau'}}) = \frac {\dimone\dimtwo}{4\sigma^2}\ell^2(\theta^\tau, \theta^{\tau'}) = \frac{V_0^2 \dimone \dimtwo}{64k_1^3k_2^3\sigma^2}d_H(\tau, \tau').$$
It follows that
$$\underset{d_H(\tau, \tau')=1}{\min}(1-\|\mathbb P_{\theta^\tau} - \mathbb P_{\theta^{\tau'}}\|_{TV})\geq\left(1-\frac{V_0}{8 \sigma} \sqrt{ \frac{\dimone \dimtwo }{ k_1^3k_2^3 } } \right).$$
Putting things together in Assouad's lemma, we obtain
$$\underset{\tilde\theta}{\inf}\underset{\theta\in \cM_{V_0}}{\sup} R(\tilde\theta, \theta^*)\geq 
 \frac{V_0^2}{128k_1^2k_2^2}\left(1 - \frac{V_0}{8 \sigma} \sqrt{ \frac{\dimone \dimtwo }{ k_1^3k_2^3 } } \right).$$

If $\frac{4 \sigma}{\sqrt{\dimone \dimtwo}} \le V_0 \le 4 \sigma \dimone\dimtwo$, then we can choose $k_1$ and $k_2$ such that $k_1k_2$ is of order $ \big( \frac{V_0\sqrt{\dimone\dimtwo}}{4\sigma} \big)^{\frac23}$, which yields that
$$\underset{\tilde\theta}{\inf}\underset{\theta\in \cM_{V_0}}{\sup} R(\tilde\theta, \theta^*)\gtrsim \left( \frac{\sigma^2 V_0}{\dimone\dimtwo} \right)^{2/3}.$$
If $V_0 \le \frac{4 \sigma}{\sqrt{\dimone \dimtwo}}$, then $\big( \frac{\sigma^2 V_0}{\dimone\dimtwo} \big)^{2/3} \lesssim \frac{\sigma^2}{ \dimone \dimtwo} $, so the second term in the statement of Theorem \ref{thm:lower_bound} is dominated by the first term. Finally, if $V_0 \ge 4 \sigma \dimone\dimtwo$, then $\big( \frac{\sigma^2 V_0}{\dimone\dimtwo} \big)^{2/3} \ge \sigma^2$, so the rate becomes trivial. This completes the proof.

\subsection{Proof of Theorem \ref{thm:minimax}}

Similar to the proof of Theorem \ref{thm:upper_bound}, the main technical step in this proof is to bound the supremum of an empirical process.
This is dealt with in the following proposition, whose proof is deferred to Section \ref{sec:pf-sup-per}.
Note that both the statement and the proof are very similar to Proposition \ref{prop:sup-id}, and that we  can restrict our attention to noise matrices that are Gaussian.

\begin{proposition} \label{prop:sup-per}
Fix any matrix $\theta^* \in \R^{\dimone \times \dimtwo}$ and permutations $\pi_1 \in \cS_{\dimone}$ and $\pi_2 \in \cS_{\dimtwo}$. Suppose that $\gaussiannoise \in \R^{\dimone \times \dimtwo}$ has \iid $\cN(0, 1)$ entries.
Then for any integer $k \in [ \dimone \dimtwo ]$ and any $t > 0$, we have
$$
\E \Big[ \sup_{\substack{\theta \in \cM_{V_0} \\ \|\theta (\pi_1, \pi_2) - \theta^* \|_F \leq t}} \langle \gaussiannoise , \theta (\pi_1, \pi_2) - \theta^* \rangle  \Big] 
\lesssim t \left[ \sqrt{\dimone} + \sqrt{ k \log (\dimtwo)} \right] 
+ \sqrt{\frac{\dimone \dimtwo }{k}} \sqrt{\log (\dimone) \log (\dimtwo)} \, V_0,
$$
where we use the convention that the supremum over the empty set is \( -\infty \).
\end{proposition}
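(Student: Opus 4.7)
The plan is to reduce this to the unpermuted setting via a change of variables and then follow the strategy of Proposition~\ref{prop:sup-id}, with one critical twist. Since permutations are orthogonal, setting \( Z' \defn Z(\pi_1^{-1}, \pi_2^{-1}) \) and \( \tilde\theta^\ast \defn \theta^\ast(\pi_1^{-1}, \pi_2^{-1}) \), we have \( Z' \sim Z \) in distribution and
\begin{align}
\langle Z, \theta(\pi_1,\pi_2) - \theta^\ast \rangle = \langle Z', \theta - \tilde\theta^\ast \rangle,
\qquad
\| \theta(\pi_1,\pi_2) - \theta^\ast \|_F = \| \theta - \tilde\theta^\ast \|_F,
\end{align}
so it suffices to bound \( \E\bigl[\sup_{\theta \in \cM_{V_0},\, \|\theta - \tilde\theta^\ast\|_F \le t} \langle Z', \theta - \tilde\theta^\ast \rangle\bigr] \), with \( \tilde\theta^\ast \) an arbitrary matrix.

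I would then reuse the spectral decomposition \( Z' = \Pi_1 Z' + (I-\Pi_1)\Pi_2 Z' + (I-\Pi_1)(I-\Pi_2) Z' \) introduced in the proof of Proposition~\ref{prop:sup-id}, and apply the subadditivity of the supremum. For the first two terms, Cauchy-Schwarz with \( \|\theta - \tilde\theta^\ast\|_F \le t \), together with the rank bounds \( \rank(\Pi_1) = \dimone + \dimtwo - 1 \) and \( \rank((I-\Pi_1)\Pi_2) \le |J| \le k\log(\dimtwo) \), yields the contributions \( t\sqrt{\dimone} \) and \( t\sqrt{k \log(\dimtwo)} \) respectively.

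The main step, and the one that differs most from Proposition~\ref{prop:sup-id}, is the third term. Writing \( \sup_\theta \langle (I-\Pi_1)(I-\Pi_2) Z', \theta - \tilde\theta^\ast \rangle = \sup_\theta \langle (I-\Pi_1)(I-\Pi_2) Z', \theta \rangle - \langle (I-\Pi_1)(I-\Pi_2) Z', \tilde\theta^\ast \rangle \), the fixed linear functional of \( \tilde\theta^\ast \) has mean zero and drops out in expectation. It therefore remains to bound \( \E\bigl[\sup_{\theta \in \cM_{V_0}} \langle (I-\Pi_1)(I-\Pi_2) Z', \theta \rangle\bigr] \), where I relaxed the constraint \( \|\theta - \tilde\theta^\ast\|_F \le t \) since I no longer need it here. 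Using the identity
\begin{align}
\langle (I-\Pi_1)(I-\Pi_2) Z', \theta \rangle = \langle (\Done^\dagger)^\top (I - \Pi_2)(Z') \Dtwo^\dagger, \Done \theta \Dtwo^\top \rangle,
\end{align}
together with H\"older's inequality against the weight matrix \( \Phi \) from~\eqref{eq:cp}, I would then crucially exploit that \( \theta \in \cM \) implies \( \Done \theta \Dtwo^\top \ge 0 \) and hence
\begin{align}
\| \Phi \odot \Done \theta \Dtwo^\top \|_1 = \langle \Phi, \Done \theta \Dtwo^\top \rangle \le \|\Phi\|_\infty \| \Done \theta \Dtwo^\top \|_1 = \|\Phi\|_\infty V(\theta) \le \|\Phi\|_\infty V_0.
\end{align}
Combined with the sub-Gaussian estimate \( \E[ \| (\Done^\dagger)^\top (I - \Pi_2)(Z') \Dtwo^\dagger \oslash \Phi \|_\infty ] \lesssim 1 \) from Lemma~\ref{lem:var-bd}, this yields the bound \( \sqrt{\dimone \dimtwo / k} \sqrt{\log(\dimone) \log(\dimtwo)} \, V_0 \) for the third term.

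The hard part, conceptually, is realizing that we do not need a Cauchy-Schwarz style bound for the third term that pays the factor of \( t \); instead, the \( V_0 \) constraint on \( \theta \) alone controls it in \( \ell^1 \) after weighting by \( \Phi \). This is why the bound in Proposition~\ref{prop:sup-per} lacks the third summand of Proposition~\ref{prop:sup-id}: the arbitrary (non-Monge) \( \tilde\theta^\ast \) contribution is absorbed by taking expectation, while the Monge structure of \( \theta \) plus the global seminorm bound \( V(\theta) \le V_0 \) replaces the adaptive bound on \( V(\theta^\ast) \) used before.
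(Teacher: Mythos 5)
Your proposal is correct and follows essentially the same route as the paper's proof. You reduce to the unpermuted case via the distributional invariance of $\gaussiannoise$ under permutations, reuse the three-way spectral decomposition and the first-two-term bounds from Proposition~\ref{prop:sup-id}, and for the third term correctly identify the two key adjustments: the fixed linear functional in $\tilde\theta^\ast$ vanishes in expectation since $\gaussiannoise$ is centered (this is exactly what justifies \eqref{eq:s1} in the paper), and the constraint $\theta \in \cM_{V_0}$ allows the H\"older bound $\|\Phi \odot \Done\theta\Dtwo^\top\|_1 \le \|\Phi\|_\infty V_0$ to replace the more delicate argument needed when only $\theta^*$ is anti-Monge.
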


We assume without loss of generality that the underlying true permutations $\pi_1^*$ and $\pi_2^*$ are the identities throughout the proof. For fixed permutations $\pi_1 \in \cS_{\dimone}$ and $\pi_2 \in \cS_{\dimtwo}$, we obtain from Theorem~\ref{thm:maj} and Proposition~\ref{prop:sup-per} that
\begin{align*}
	\leadeq{\gamma_2 \big( \{ \theta (\pi_1, \pi_2) - \theta^* : \theta \in \cM_{V_0} , \, \|\theta (\pi_1, \pi_2) - \theta^* \|_F \leq t \} \big)} \\
	\asymp {} & \E \Big[ \sup_{\substack{\theta \in \cM_{V_0} \\ \|\theta (\pi_1, \pi_2) - \theta^* \|_F \leq t}} \langle \varepsilon , \theta (\pi_1, \pi_2) - \theta^* \rangle  \Big] \\
	\lesssim {} & t \left[ \sqrt{\dimone} + \sqrt{ k \log (\dimtwo)} \right] 
	+ \sqrt{\frac{\dimone \dimtwo }{k}} \sqrt{\log (\dimone) \log (\dimtwo)} \, V_0 .
\end{align*}
Therefore, Theorem~\ref{thm:tail} yields that with probability $1 - 4 \exp(-s^2)$, 
\begin{align*}
	\sup_{\substack{\theta \in \cM_{V_0} \\ \|\theta (\pi_1, \pi_2) - \theta^* \|_F \leq t}} \langle \varepsilon , \theta (\pi_1, \pi_2) - \theta^* \rangle
	& \lesssim t \sigma \left[ \sqrt{\dimone} + \sqrt{ k \log (\dimtwo)} \right]
	+ \sigma \sqrt{\frac{\dimone \dimtwo }{k}} \sqrt{\log (\dimone) \log (\dimtwo)} \, V_0
	+ \sigma s t . 
\end{align*}
Taking $s = 2 \sqrt{ \dimone \log (\dimone) }$ and applying a union bound over all $(\pi_1, \pi_2) \in \cS_{\dimone} \times \cS_{\dimtwo} $ (which has log-cardinality $\log (\dimone ! \dimtwo !) \le 2 \dimone \log \dimone$), we get that with probability at least $1 - \dimone^{-\dimone}$, 
\begin{align}
	\leadeq{\sup_{\substack{(\pi_1, \pi_2, \theta) \in \cS_{\dimone} \times \cS_{\dimtwo} \times \cM_{V_0} \\ \|\theta (\pi_1, \pi_2) - \theta^* \|_F \leq t}} \langle \varepsilon , \theta (\pi_1, \pi_2) - \theta^* \rangle} \\
	\lesssim {} & t \sigma \left[ \sqrt{\dimone \log(\dimone) } + \sqrt{ k \log (\dimtwo)} \right]
	+ \sigma \sqrt{\frac{\dimone \dimtwo }{k}} \sqrt{\log (\dimone) \log (\dimtwo)} \, V_0 . 
\end{align}

Let us define 
\[
f_{\theta^*}(t) = \sup_{\substack{(\pi_1, \pi_2, \theta) \in \cS_{\dimone} \times \cS_{\dimtwo} \times \cM_{V_0} \\ \| \theta( \pi_1, \pi_2 ) - \theta^* \|_F \le t }} \langle \eps,  \theta( \pi_1, \pi_2 ) - \theta^* \rangle - \frac{t^2}2 .
\]
Then for any 
\begin{align*}
	t > t^* & \defn C \sigma \left[ \sqrt{\dimone \log(\dimone)} + \sqrt{ k \log (\dimtwo) } \right]
	+ C \Big[ \sigma \sqrt{\frac{\dimone \dimtwo }{k}} \sqrt{\log (\dimone) \log (\dimtwo)} \, V_0 \Big]^{1/2}
\end{align*}
where \( C \) is a sufficiently large constant, it holds with probability at least $1 - \dimone^{- \dimone}$ that 
\begin{align}
	f_{\theta^*}( t ) < 0.
\end{align}
Therefore by Lemma~\ref{lem:variational}, we obtain
\begin{align*}
	\frac{1}{\dimone \dimtwo} \| \thetagls - \theta^* \|_F^2 \le \frac{(t^*)^2}{\dimone \dimtwo} 
	& \lesssim \sigma^2 \left[ \frac{\log(\dimone) }{\dimtwo} + \frac{k \log (\dimtwo)}{\dimone \dimtwo}  \right] 
	+ \sigma \frac{\sqrt{\log (\dimone) \log (\dimtwo)}}{\sqrt{ \dimone \dimtwo k }} V_0,
\end{align*}
noting that by assumption, \( \theta^\ast \in \bigcup_{ \substack{ \scriptscriptstyle \pi_1 \in \cS_{\dimone} \\ \scriptscriptstyle \pi_2 \in \cS_{\dimtwo} } } \cM_{V_0} (\pi_1, \pi_2) \).

Balancing the terms that depend on \( k \) leads to the choice
\begin{align}
	k^* = (\dimone \dimtwo)^{1/3} \left( \frac{\log(\dimone)}{\log(\dimtwo)} \right)^{1/3} \left( \frac{V_0}{\sigma} \right)^{2/3} ,
\end{align}
and therefore we obtain that with probability $1 - \dimone^{- \dimone}$, 
\begin{align}
	\frac{1}{\dimone \dimtwo} \| \thetagls - \theta^* \|_F^2 \lesssim \frac{\sigma^2 \log(\dimone) }{\dimtwo} + \left( \frac{ \sigma^2 V_0 }{ \dimone \dimtwo } \right)^{2/3}  \log(\dimone)^{1/3} \log(\dimtwo)^{2/3},
\end{align}
if \( 1 \le k^\ast \le \dimone \dimtwo \).
We conclude by arguing similar to the proof of Theorem \ref{thm:upper_bound} to handle the other possible cases of \( k^\ast \) and to get an error bound in expectation instead of with high probability.

\subsection{Proof of Proposition~\ref{prop:sup-per}}
\label{sec:pf-sup-per}

Note that Proposition~\ref{prop:sup-per} is very similar to Proposition~\ref{prop:sup-id}, and so are their proofs.
The difference is that Proposition~\ref{prop:sup-per} has extra complications arising from the presence of permutations, while at the same time it is simpler because we restrict $\theta$ to $\cM_{V_0} \subset \cM$.
Hence, we focus only the differences to the proof of Proposition \ref{prop:sup-id} here.

Since $\gaussiannoise$ is equal in distribution to $\gaussiannoise( \pi_1^{-1}, \pi_2^{-1} )$, we have
$$
\langle \gaussiannoise , \theta (\pi_1, \pi_2) - \theta^* \rangle \stackrel{d}{=} \langle \gaussiannoise , \theta  - \theta^* (\pi_1^{-1}, \pi_2^{-1}) \rangle
$$
in distribution for any permutations $\pi_1 \in \cS_{\dimone}, \pi_2 \in \cS_{\dimtwo}$. Therefore, by replacing $\theta^* (\pi_1^{-1}, \pi_2^{-1})$ with $\theta^*$,  it suffices to prove that for any matrix $\theta^* \in \R^{\dimone \times \dimtwo}$, it holds that 
\begin{align*}
\E \Big[ \sup_{\substack{\theta \in \cM_{V_0} \\ \|\theta - \theta^* \|_F \leq t}} \langle \gaussiannoise , \theta  - \theta^* \rangle  \Big] 
\lesssim t \left[ \sqrt{\dimone} + \sqrt{ k \log (\dimtwo)} \right] 
+ \sqrt{\frac{\dimone \dimtwo }{k}} \log (\dimone) \log (\dimtwo) \, V_0 .
\end{align*}

Note that this supremum is very similar to that studied in Proposition~\ref{prop:sup-id}, with the main differences being that $\theta^*$ can be any matrix in $\R^{\dimone \times \dimtwo}$ while $\theta$ is restricted to $\cM_{V_0}$.
As in the proof of Proposition~\ref{prop:sup-id}, \eqref{eq:cd}, the supremum can be split into three terms:
\begin{align*}
	\E[\sup_{\substack{\theta \in \cM_{V_0} \\ \|\theta - \theta^* \|_F \leq t}} \langle \gaussiannoise , \theta - \theta^* \rangle]
	\leq {} & \E[\sup_{\substack{\theta \in \cM_{V_0} \\ \|\theta - \theta^* \|_F \leq t}} \langle \Pi_1 (\gaussiannoise) , \theta - \theta^* \rangle] 
	+ \E[\sup_{\substack{\theta \in \cM_{V_0} \\ \|\theta - \theta^* \|_F \leq t}} \langle  (I - \Pi_1) \Pi_2 (\gaussiannoise) , \theta - \theta^* \rangle]\\
	{} & + \E[\sup_{\substack{\theta \in \cM_{V_0} \\ \|\theta - \theta^* \|_F \leq t}} \langle (I - \Pi_1) (I - \Pi_2) (\gaussiannoise) , \theta - \theta^* \rangle].
\end{align*}

The first two terms can be bounded exactly as before, because we only need the condition $\| \theta - \theta^* \|_F \le t$ but not any other property of $\theta^*$. Up to a constant factor, the third term can be bounded by (recall~\eqref{eq:s1} and~\eqref{eq:s3} in the proof of Proposition~\ref{prop:sup-id})
$$
\sup_{\substack{\theta \in \mathcal{M}_{V_0} \\ \| \theta - \theta^* \|_F \leq t}} \| \Phi \odot (D \theta \Dtwo^\top) \|_1 .
$$
Thanks to the constraint $\theta \in \cM_{V_0}$ in this case, we immediately obtain 
$$
\| \Phi \odot (D \theta \Dtwo^\top) \|_1 
= \langle \Phi , D \theta \Dtwo^\top \rangle
\leq \| \Phi \|_\infty \| D \theta \Dtwo^\top \|_1
\lesssim \sqrt{\frac{\dimone \dimtwo }{k}} \sqrt{\log (\dimone) \log (\dimtwo)} \, V_0 .
$$
Hence, it holds 
\begin{align*} 
\E \Big[ \sup_{\substack{\theta \in \cM\\ \|\theta - \theta^* \|_F \leq t}} \langle (I - \Pi_1) (I - \Pi_2) (\gaussiannoise) , \theta - \theta^* \rangle \Big] 
\lesssim \sqrt{\frac{\dimone \dimtwo }{k}} \sqrt{\log (\dimone) \log (\dimtwo)} \, V_0 .
\end{align*}
Combining the bounds on the three terms completes the proof.

\subsection{Proof of Theorem~\ref{thm:per-proj}}

We assume without loss of generality that the underlying true permutations $\pi_1^*$ and $\pi_2^*$ are the identities throughout the proof, except where the notations \( \pi_1^\ast \) and \( \pi_2^\ast \) are explicitly used.
Recall that we defined the reversal permutation $\pir  \in \cS_{\dimone}$ by $\pir (i) = \dimone - i + 1$ for $i \in [\dimone]$. 
Given estimators $\hat \pi_1, \hat \pi_2$, let us define 
\begin{align}
\tilde \theta \defn \argmin_{\theta \in \cM_{V_0} ( \hat \pi_1, \hat \pi_2 ) \cup \cM_{V_0} ( \pir \circ \hat \pi_1, \hat \pi_2 ) } \| \theta - \theta^* \|_F^2 .
\label{eq:t-theta-def}
\end{align}

The theorem follows from the next two propositions combined.
The first proposition says that the final denoising error can be bounded by the sum of the minimax rate (the error rate incurred by the projection step of the algorithm), and the error incurred by the permutation estimators. 
The second proposition controls the error incurred by the permutation estimators. 

\begin{proposition} \label{prop:per-proj}
Suppose that we have $y = \theta^* + \eps$, where the noise matrix $\eps$ has independent $\subG( C \sigma^2 )$ entries with $\Var [ \eps_{i, j} ] = \sigma^2 .$
Let the estimators $(\hat \pi_1, \hat \pi_1' , \hat \pi_2, \hat \theta)$ be given by Algorithm \ref{alg:variance-sorting-main}, and define $\tilde \theta$ according to~\eqref{eq:t-theta-def}. Then it holds with probability at least $1 - \dimone^{- \dimone}$ that 
\begin{align*}
	\frac{1}{\dimone \dimtwo} \| \hat \theta - \theta^* \|_F^2 
	\lesssim \left[\frac{\sigma^2 \log(\dimone) }{\dimtwo} + \left( \frac{ \sigma^2 V_0 }{ \dimone \dimtwo } \right)^{2/3}  \log(\dimone)^{1/3} \log(\dimtwo)^{2/3} \right] \wedge \sigma^2
	+ \frac{1}{\dimone \dimtwo} \| \tilde \theta - \theta^* \|_F^2 .
\end{align*}
\end{proposition}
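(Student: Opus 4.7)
The plan is to establish the oracle inequality via a basic inequality combined with a localized, uniform empirical process bound over $\bar\cM_{V_0}$. Assume without loss of generality that $\pi_1^* = \pi_2^* = \id$, so that $\theta^*(\pi_1^*, \pi_2^*) = \theta^*$, and set $S \defn \cM_{V_0}(\hat\pi_1, \hat\pi_2) \cup \cM_{V_0}(\pir \circ \hat\pi_1, \hat\pi_2) \subseteq \bar\cM_{V_0}$. By construction in the Main Algorithm, $\hat\theta$ is a minimizer of $\|\theta - y\|_F^2$ over $S$, while $\tilde\theta$ is defined as the closest point in this same set $S$ to $\theta^*$. Since $\tilde\theta \in S$, expanding $\|\hat\theta - y\|_F^2 \le \|\tilde\theta - y\|_F^2$ with $y = \theta^* + \varepsilon$ gives the basic inequality
\begin{equation}
    \|\hat\theta - \theta^*\|_F^2 \le \|\tilde\theta - \theta^*\|_F^2 + 2\langle \varepsilon, \hat\theta - \tilde\theta\rangle,
\end{equation}
and the stochastic term splits cleanly as $\langle\varepsilon, \hat\theta - \tilde\theta\rangle = \langle\varepsilon, \hat\theta - \theta^*\rangle - \langle\varepsilon, \tilde\theta - \theta^*\rangle$, separating the data dependence of the two estimators.

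Since $\hat\theta, \tilde\theta \in \bar\cM_{V_0}$, the next step is a uniform empirical process bound over this deterministic superset. Following the argument already used in the proof of Theorem~\ref{thm:minimax}, I would combine Proposition~\ref{prop:sup-per} with Theorem~\ref{thm:tail}, a union bound over all $n_1!\,n_2!$ permutation pairs in $\cS_{n_1} \times \cS_{n_2}$, and a further union bound over $\pm\varepsilon$, to conclude that with probability at least $1 - n_1^{-n_1}$, for every $t > 0$ and every integer $k \in [n_1 n_2]$,
\begin{equation}
    \sup_{\substack{\theta \in \bar\cM_{V_0}\\ \|\theta - \theta^*\|_F \le t}} \bigl|\langle\varepsilon, \theta - \theta^*\rangle\bigr| \lesssim t A_k + B_k,
\end{equation}
with $A_k \defn \sigma\bigl[\sqrt{n_1 \log n_1} + \sqrt{k \log n_2}\,\bigr]$ and $B_k \defn \sigma\sqrt{(n_1 n_2/k)\log n_1 \log n_2}\, V_0$. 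The absolute value inside the supremum is admissible because $-\varepsilon$ has the same distribution as $\varepsilon$, so the same bound applies in both signs at the cost of a universal constant.

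Applying this estimate first with $t = a \defn \|\hat\theta - \theta^*\|_F$ to control $\langle\varepsilon, \hat\theta - \theta^*\rangle$ and then with $t = b \defn \|\tilde\theta - \theta^*\|_F$ to control $\langle\varepsilon, \tilde\theta - \theta^*\rangle$ yields $2\langle\varepsilon, \hat\theta - \tilde\theta\rangle \lesssim (a+b)A_k + B_k$. Substituting into the basic inequality and absorbing via $2 a A_k \le a^2/2 + 2 A_k^2$ and $2 b A_k \le b^2 + A_k^2$ produces $a^2 \lesssim b^2 + A_k^2 + B_k$. The final step is to choose $k$ to minimize $A_k^2 + B_k$, which is precisely the balancing already carried out in the proof of Theorem~\ref{thm:minimax}; this yields $(A_k^2 + B_k)/(n_1 n_2)$ equal to the minimax rate appearing in the statement, and the corner cases $k^\ast < 1$ or $k^\ast > n_1 n_2$ as well as the trivial $\sigma^2$ ceiling are dealt with verbatim as in that earlier proof. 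The main obstacle is that $\tilde\theta$ depends on the data through $(\hat\pi_1, \hat\pi_2)$, which prevents a direct application of Proposition~\ref{prop:sup-per} centered at $\tilde\theta$; re-centering both inner products at the deterministic $\theta^*$ and then taking a uniform union bound over all permutation pairs is precisely what circumvents this difficulty.
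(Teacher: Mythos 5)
Your route is genuinely different from the paper's. The paper applies Chatterjee's variational formula (Lemma~\ref{lem:variational}) with $\tilde\theta$ as the effective ground truth and $y - \tilde\theta$ as the effective noise, then handles the data-dependence of $\tilde\theta$ by taking a supremum over all $(\pi_1, \pi_1', \pi_2, \pi_2', \theta) \in \cS_{\dimone}^2 \times \cS_{\dimtwo}^2 \times \cM_{V_0}$, so that every centered quantity $\theta(\pi_1',\pi_2') - \tilde\theta^{\pi_1,\pi_2}$ appearing in the sup is deterministic. You instead use the basic inequality $\|\hat\theta - \theta^*\|_F^2 \le \|\tilde\theta - \theta^*\|_F^2 + 2\langle\eps, \hat\theta - \tilde\theta\rangle$ (valid because $\tilde\theta \in S$ and $\hat\theta$ minimizes over $S$) and re-center both inner products at the deterministic $\theta^*$, which is a cleaner way to defeat the data-dependence and, incidentally, only requires a union bound over pairs $(\pi_1,\pi_2)$ rather than 4-tuples.

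However, there is a genuine gap. You assert that with probability at least $1 - \dimone^{-\dimone}$ the localized bound $\sup_{\theta \in \bar\cM_{V_0},\, \|\theta - \theta^*\|_F \le t}|\langle\eps, \theta - \theta^*\rangle| \lesssim t A_k + B_k$ holds simultaneously \emph{for every} $t > 0$, and then plug in the random radii $a = \|\hat\theta - \theta^*\|_F$ and $b = \|\tilde\theta - \theta^*\|_F$. But Proposition~\ref{prop:sup-per} combined with Theorem~\ref{thm:tail} gives this bound only at each \emph{fixed} $t$; to upgrade to uniformity in $t$ you need an additional peeling argument (e.g., over dyadic radii $t_j = 2^j B_k / A_k$, using monotonicity of the sup in $t$ and absorbing the $O(\log)$ union-bound cost into the already-large $s^2 \asymp \dimone \log\dimone$). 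Since $\bar\cM_{V_0}$ is not convex, you cannot shortcut this by rescaling. This is exactly the difficulty that Chatterjee's formula is designed to sidestep: it produces a \emph{deterministic} critical radius $t^*$ with $f_{\tilde\theta}(t) < 0$ for $t > t^*$, so no supremum at a random radius is ever needed. Your argument can be repaired by adding the peeling step, but as written it silently uses a uniform-in-$t$ bound that has not been established.
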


\begin{proposition} \label{prop:per-err}
Suppose that we have $y = \theta^* + \eps$, where $\theta^* \in \cM_{V_0}^{\dimone, \dimtwo}$ and the noise matrix $\eps$ has independent $\subG( C \sigma^2 )$ entries. 
For the permutation estimators $\hat \pi_1$ and $\hat \pi_2$ given by the {\sf Variance Sorting} subroutine, Algorithm \ref{alg:variance-sorting-sub}, let $\tilde \theta$ be defined by~\eqref{eq:t-theta-def}. Then it holds with probability at least $1 - \dimone^{-9}$ that
\begin{align}
\frac{1}{\dimone \dimtwo} \| \tilde \theta - \theta^* \|_F^2
\lesssim \big( \sigma^2 + \sigma V_0 \big)  \Big( \frac{ \log  \dimone}{ \dimtwo} \Big)^{1/2} . 
\end{align}
\end{proposition}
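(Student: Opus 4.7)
The plan is to bound the denoising error by a permutation-approximation error, then control the latter via concentration of the variance-sorting scores combined with the monotonicity structure of anti-Monge matrices. By Lemma~\ref{lem:c}, decompose $\theta^* = R + S + B$ where $R$ has constant rows, $S$ has constant columns, and $B$ is the bivariate-isotonic anti-Monge core with $V(B) = V(\theta^*) \le V_0$. Since adding row- or column-constant matrices preserves both the anti-Monge property and the variation $V(\cdot)$, the candidate $\theta' \in \cM_{V_0}$ has enough freedom in its row/column-constant parts to cancel $R$ and $S$ after applying $(\hat\pi_1, \hat\pi_2)$, leaving only the permuted $B$ term. This yields
\begin{align}
\| \tilde\theta - \theta^* \|_F^2 \le \min_{\pi' \in \{\hat\pi_1,\, \pir \circ \hat\pi_1\}} \| B(\pi', \hat\pi_2) - B \|_F^2,
\end{align}
which a triangle inequality splits into $2\|B(\pi', \id) - B\|_F^2 + 2\|B(\id, \hat\pi_2) - B\|_F^2$, to be handled symmetrically.

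Next, concentrate the sorting score: writing $\xi(i,j) = (y_{i,\cdot} - y_{j,\cdot})^\top P (y_{i,\cdot} - y_{j,\cdot})$ with $P = I - \tfrac{1}{\dimtwo}\mathbf{1}\mathbf{1}^\top$, the Hanson--Wright inequality controls the quadratic-in-noise term while sub-Gaussian concentration handles the noise--signal cross-term, whose scale is bounded using the deterministic estimate $V_{ij}(\theta^*) \le 4 \dimtwo V_0^2$ (from $\|B\|_\infty \le V_0$). A union bound over $\dimone^2$ pairs then gives, with probability at least $1 - \dimone^{-9}$,
\begin{align}
|\xi(i,j) - V_{ij}(\theta^*) - 2(\dimtwo-1)\sigma^2| \le C(\sigma^2 + \sigma V_0)\sqrt{\dimtwo \log \dimone} =: \delta
\end{align}
uniformly in $(i, j)$, where $V_{ij}(\theta^*)$ is the deterministic analogue of $\xi(i,j)$ with $\theta^*$ in place of $y$.

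Now exploit the anti-Monge structure. In the unpermuted frame ($\pi_1^\ast = \pi_2^\ast = \id$), the identity $V_{1, i'}(\theta^*) - V_{1, i}(\theta^*) = \sum_k (A_k - \bar A)(D_k - \bar D)$ with $A_k = B_{i,k} + B_{i',k}$ and $D_k = B_{i',k} - B_{i,k}$, combined with the fact that both $A$ and $D$ are non-decreasing in $k$ (by bivariate isotonicity and by the anti-Monge property, respectively), gives via Chebyshev's sum inequality that $V_{1, \cdot}(\theta^*)$ is monotone non-decreasing and, more generally, $V_{i_0, \cdot}(\theta^*)$ is U-shaped around $i_0$. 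Since $\xi(i_0, j_0) \ge \xi(1, \dimone)$, the concentration bound forces $V_{i_0 j_0}(\theta^*) \ge V_{1\dimone}(\theta^*) - 2\delta$, placing $\{i_0, j_0\}$ near the extremes of the true order; after possibly applying the algorithm's $\pir$ flip, sorting $\xi(i_0, \cdot)$ matches the monotone sort of $V_{i_0, \cdot}(\theta^*)$, and a standard order-statistic argument yields $|V_{i_0, \hat\pi_1^{-1}(i)} - V_{i_0, i}| \le 2\delta$ uniformly in $i$.

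The main obstacle is the final step of converting this variance-level approximation into the desired row-distance bound $\sum_i \|B_{\hat\pi_1^{-1}(i), \cdot} - B_{i, \cdot}\|^2 \lesssim \dimone \delta$ without picking up an extra factor of $\dimtwo$. This leverages the full bivariate-isotonic anti-Monge structure via a pointwise comparison of the form $\|B_{i, \cdot} - B_{j, \cdot}\|^2 \lesssim |V_{1, j} - V_{1, i}|$ (up to the column-mean subtraction that is freely absorbed into the choice of $\theta'$); this is sharp for rank-one examples $B = \phi \psi^\top$ and follows in general from the co-monotonicity of $A$ and $D$ in the covariance identity above, which forces the covariance to dominate an $\ell^2$-distance on $D$. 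Combining everything gives
\begin{align}
\| B(\hat\pi_1, \id) - B \|_F^2 \lesssim \dimone\, \delta = \dimone(\sigma^2 + \sigma V_0) \sqrt{\dimtwo \log \dimone},
\end{align}
and together with the symmetric column bound and division by $\dimone \dimtwo$ this recovers the claimed rate $(\sigma^2 + \sigma V_0)(\log \dimone / \dimtwo)^{1/2}$; the edge case in which $V(\theta^*)$ itself is too small for the extremal identification to be reliable is handled trivially because $B$ is then of comparable scale to the noise.
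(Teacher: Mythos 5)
Your high-level strategy matches the paper's: remove the first-order row/column freedom (you via the Lemma~\ref{lem:c} decomposition $\theta^* = R+S+B$, the paper via row/column centering to $\bar\theta$), establish concentration of $\xi(i,j)$ around its mean, and exploit an anti-Monge superadditivity of row distances to convert variance-closeness into row-distance closeness. Your ``pointwise comparison'' $\|B_{i,\cdot} - B_{j,\cdot}\|^2 \lesssim |V_{1,j} - V_{1,i}|$ is precisely the paper's Lemma~\ref{lem:f-prop}, which states $f(i,m) + f(m,j) \le f(i,j)$ for $i\le m\le j$ (with $f(i,j)$ the centered row-squared-distance), and the concentration estimate \eqref{eq:xi-dev} matches your concentration step. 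So the ingredients are the same.

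The gap is in the step you call ``a standard order-statistic argument.'' That step is not standard here; it is the main combinatorial content of the proof, and what you wrote does not supply it. There are two specific problems. First, to pass from the pairwise statement ``$i<j$, $\hat\pi_1(i)>\hat\pi_1(j)$ $\Rightarrow$ $f(i,j)$ small'' to the diagonal statement ``$f(\hat\pi_1(i),i)$ small for all $i$,'' one needs a combinatorial lemma about permutations and superadditive symmetric functions — the paper's Lemma~\ref{lem:per-sort} — which is a bespoke argument, not an appeal to order statistics of a noisy sequence. Second, even the pairwise claim cannot be obtained directly from your pointwise comparison alone: the useful inequality $f(i,j)\le f(i_0,j)-f(i_0,i)$ only holds when $i_0$ lies \emph{outside} the interval $[i,j]$ (i.e., $i_0\le i\le j$ or $i\le j\le i_0$), whereas your sorted scores $\xi(i_0,\cdot)$ compare indices on both sides of $i_0$. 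The paper resolves this by a genuine two-case analysis: for $i_0 < i < j$ the inequality applies directly, and for $i\le i_0$ it first establishes $f(i,i_0)\le 2\tau$ using the extremality of $(i_0,j_0)$ (via $\xi(i_0,j_0)\ge\xi(i,j_0)$, not $\xi(i_0,j_0)\ge\xi(1,\dimone)$ as you suggest), and then combines via a triangle inequality. Your proposal waves at ``placing $\{i_0,j_0\}$ near the extremes'' and ``U-shaped around $i_0$'' but does not produce the chain of inequalities that actually closes the argument. You should spell out (and prove) the analogue of Lemma~\ref{lem:per-sort} and the case split on the position of $i_0$; without these, the conversion from accurate sorting scores to the bound $\sum_i \|B_{\hat\pi_1^{-1}(i),\cdot} - B_{i,\cdot}\|^2 \lesssim \dimone\delta$ does not follow.
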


The error bound in Proposition \ref{prop:per-err} is clearly dominating, and combining the two propositions yields the statement of the theorem in probability.
Taking into account that we can do the same analysis keeping track of the failure probability independently of \( \dimone \) and integrate, we obtain bounds in expectation instead of probability as well.

\subsection{Proof of Proposition~\ref{prop:per-proj}}

To employ the variational formula in Lemma~\ref{lem:variational}, let us view $\tilde \theta$ defined by~\eqref{eq:t-theta-def} as the ground truth, and view $y - \tilde \theta = \eps + \theta^* - \tilde \theta$ as the noise. Correspondingly, we define 
\begin{equation}
	\label{eq:jv}
f_{\tilde \theta} (t) \defn \sup_{\substack{ \theta \in \cM_{V_0} ( \hat \pi_1, \hat \pi_2 ) \cup \cM_{V_0} ( \pir \circ \hat \pi_1, \hat \pi_2 ) \\ \| \theta - \tilde \theta \|_F \le t }} \langle \eps + \theta^* - \tilde \theta,  \theta - \tilde \theta \rangle - \frac{t^2}2 .
\end{equation}
To facilitate our analysis, for each pair of permutations $(\pi_1, \pi_2) \in \cS_{\dimone} \times \cS_{\dimtwo}$, we define
$$
\tilde \theta^{\pi_1, \pi_2} \defn \argmin_{\theta \in \cM_{V_0} ( \pi_1, \pi_2 )} \| \theta - \theta^* \|_F^2,
$$
and note that we have that either $\tilde \theta = \tilde \theta^{\hat \pi_1, \hat \pi_2}$ or $\tilde \theta = \tilde \theta^{\pir \circ \hat \pi_1, \hat \pi_2}$, so \( \tilde \theta \in \cM_{V_0} ( \hat \pi_1, \hat \pi_2 ) \cup \cM_{V_0} ( \pir \circ \hat \pi_1, \hat \pi_2 ) \) and Lemma~\ref{lem:variational} is applicable.

We further estimate the supremum in \eqref{eq:jv} by
\begin{align}
	f_{\tilde \theta} (t) \le {} & \sup_{\substack{ \theta \in \cM_{V_0} ( \hat \pi_1, \hat \pi_2 ) \cup \cM_{V_0} ( \pir \circ \hat \pi_1, \hat \pi_2 ) \\ \| \theta - \tilde \theta \|_F \le t }} \langle \eps ,  \theta - \tilde \theta \rangle  \notag \\
	{} & \quad +  \sup_{\substack{ \theta \in \cM_{V_0} ( \hat \pi_1, \hat \pi_2 ) \cup \cM_{V_0} ( \pir \circ \hat \pi_1, \hat \pi_2 ) \\ \| \theta - \tilde \theta \|_F \le t }} \langle \theta^* - \tilde \theta,  \theta - \tilde \theta \rangle  - \frac{t^2}2   \label{eq:sup-1} \\
	\le {} & \sup_{\substack{ (\pi_1, \pi_1', \pi_2, \pi_2', \theta) \in \cS_{\dimone}^2 \times \cS_{\dimtwo}^2 \times \cM_{V_0} \\ \| \theta (\pi_1', \pi_2') - \tilde \theta^{\pi_1, \pi_2} \|_F \le t }} \langle \eps ,  \theta (\pi_1', \pi_2') - \tilde \theta^{\pi_1, \pi_2} \rangle +
t \| \theta^* - \tilde \theta \|_F - \frac{t^2}2 .   \label{eq:sup-2}
\end{align}
Note that the random variables $\hat \pi_1, \hat \pi_2$ and $\tilde \theta$ depend on $\eps$, so it is not clear how to control the first supremum in~\eqref{eq:sup-1}. Instead, in~\eqref{eq:sup-2} we take a supremum over all $(\pi_1, \pi_1', \pi_2, \pi_2', \theta) \in \cS_{\dimone}^2 \times \cS_{\dimtwo}^2 \times \cM_{V_0}$, where each individual quantity $\theta (\pi_1', \pi_2') - \tilde \theta^{\pi_1, \pi_2}$ is deterministic. 

For fixed permutations $\pi_1, \pi_1' \in \cS_{\dimone}$ and $\pi_2, \pi_2' \in \cS_{\dimtwo}$, we obtain from Theorem~\ref{thm:maj} and Proposition~\ref{prop:sup-per} that
\begin{align*}
	\leadeq{\gamma_2 \big( \{ \theta (\pi_1', \pi_2') - \tilde \theta^{\pi_1, \pi_2} : \theta \in \cM_{V_0} , \, \|\theta (\pi_1', \pi_2') - \tilde \theta^{\pi_1, \pi_2} \|_F \leq t \} \big)} \\
	\asymp {} & \E_{\gaussiannoise_{i,j} \simiid \cN(0, 1)} \Big[ \sup_{\substack{\theta \in \cM_{V_0} \\ \|\theta (\pi_1', \pi_2') - \tilde \theta^{\pi_1, \pi_2} \|_F \leq t}} \langle \gaussiannoise , \theta (\pi_1', \pi_2') - \tilde \theta^{\pi_1, \pi_2} \rangle  \Big] \\
	\lesssim {} & t \left[ \sqrt{\dimone} + \sqrt{ k \log (\dimtwo)} \right] 
	+ \sqrt{\frac{\dimone \dimtwo }{k}} \sqrt{\log (\dimone) \log (\dimtwo)} \, V_0.
\end{align*}
Therefore, Theorem~\ref{thm:tail} yields that with probability $1 - 4 \exp(-s^2)$, 
\begin{align*}
	& \sup_{\substack{\theta \in \cM_{V_0} \\ \|\theta (\pi_1', \pi_2') - \tilde \theta^{\pi_1, \pi_2} \|_F \leq t}} \langle \varepsilon , \theta (\pi_1', \pi_2') - \tilde \theta^{\pi_1, \pi_2} \rangle \\
	& \lesssim t \sigma \left[ \sqrt{\dimone} + \sqrt{ k \log (\dimtwo)} \right]
	+ \sigma \sqrt{\frac{\dimone \dimtwo }{k}} \sqrt{\log (\dimone) \log (\dimtwo)} \, V_0
	+ \sigma s t . 
\end{align*}
Taking $s = 3 \sqrt{ \dimone \log (\dimone) }$ and applying a union bound over all $\pi_1, \pi_1' \in \cS_{\dimone}$ and $\pi_2, \pi_2' \in \cS_{\dimtwo}$, we see that with probability at least $1 - \dimone^{-\dimone}$, 
\begin{align}
	\leadeq{\sup_{\substack{ (\pi_1, \pi_1', \pi_2, \pi_2', \theta) \in \cS_{\dimone}^2 \times \cS_{\dimtwo}^2 \times \cM_{V_0} \\ \| \theta (\pi_1', \pi_2') - \tilde \theta^{\pi_1, \pi_2} \|_F \le t }} \langle \eps ,  \theta (\pi_1', \pi_2') - \tilde \theta^{\pi_1, \pi_2} \rangle} \\
	\lesssim {} & t \sigma \left[ \sqrt{\dimone \log(\dimone) } + \sqrt{ k \log (\dimtwo)} \right]
	+ \sigma \sqrt{\frac{\dimone \dimtwo }{k}} \sqrt{\log (\dimone) \log (\dimtwo)} \, V_0 . 
\end{align}

This together with inequality~\eqref{eq:sup-2} yields that for any 
\begin{align*}
	t > t^* & \defn C \sigma \left[ \sqrt{\dimone \log(\dimone)} + \sqrt{ k \log (\dimtwo) } \right]
	+ C \Big[ \sigma \sqrt{\frac{\dimone \dimtwo }{k}} \sqrt{\log (\dimone) \log (\dimtwo)} \, V_0 \Big]^{1/2} + C \| \tilde \theta - \theta^* \|_F 
\end{align*}
where \( C \) is a sufficiently large constant, it holds with probability at least $1 - \dimone^{- \dimone}$ that 
\begin{align}
	f_{\tilde \theta}( t ) < 0.
\end{align}
Therefore, by Lemma~\ref{lem:variational} applied to the set $\cM_{V_0} ( \hat \pi_1, \hat \pi_2 ) \cup \cM_{V_0} ( \pir \circ \hat \pi_1, \hat \pi_2 )$, we obtain
\begin{align*}
	\frac{1}{\dimone \dimtwo} \| \hat \theta - \tilde \theta \|_F^2 \le \frac{(t^*)^2}{\dimone \dimtwo} 
	& \lesssim \sigma^2 \left[ \frac{\log(\dimone) }{\dimtwo} + \frac{k \log (\dimtwo)}{\dimone \dimtwo}  \right] 
	+ \sigma \frac{\sqrt{\log (\dimone) \log (\dimtwo)}}{\sqrt{ \dimone \dimtwo k }} V_0 
	+ \frac{1}{\dimone \dimtwo} \| \tilde \theta - \theta^* \|_F^2 .
\end{align*}
Balancing the terms that depend on \( k \) yields that with probability $1 - \dimone^{- \dimone}$, 
\begin{align*}
	\frac{1}{\dimone \dimtwo} \| \hat \theta - \theta^* \|_F^2 
	&\lesssim \frac{1}{\dimone \dimtwo} \| \hat \theta - \tilde \theta \|_F^2 + \frac{1}{\dimone \dimtwo} \| \tilde \theta - \theta^* \|_F^2 \\
	&\lesssim \frac{\sigma^2 \log(\dimone) }{\dimtwo} + \left( \frac{ \sigma^2 V_0}{ \dimone \dimtwo } \right)^{2/3}  \log(\dimone)^{1/3} \log(\dimtwo)^{2/3}
	+ \frac{1}{\dimone \dimtwo} \| \tilde \theta - \theta^* \|_F^2,
\end{align*}
if for the optimal \( k^\ast \), we have \( 1 \le k^\ast \le \dimone \dimtwo \).
The other cases can be handled as in the proof of Theorem \ref{thm:upper_bound}.

\subsection{Proof of Proposition \ref{prop:per-err}}

\subsubsection{Reduction to the row/column-centered case.}
First, we reduce the problem to the case where the underlying matrix $\theta^*$ has centered rows and columns. If $\theta^*$ is not centered, we may choose a matrix $R \in \R^{\dimone \times \dimtwo}$ with constant rows and a matrix $S \in \R^{\dimone \times \dimtwo}$ with constant columns, so that if $\bar \theta \defn \theta^* - R - S$, then for all $i \in [\dimone], \, j \in [\dimtwo]$, 
$$
\sum_{k = 1}^{\dimone} \bar \theta_{k, j} = \sum_{\ell = 1}^{\dimtwo} \bar \theta_{i, \ell} = 0 .
$$
Since $R$ and $S$ have constant rows and columns respectively, we have $V(\bar \theta) = V(\theta^*)$ by definition~\eqref{eq:def-v}, and thus $\bar \theta \in \cM_{V_0}$. 
More importantly, according to the definition of $\xi(i, j)$ in~\eqref{eq:xi-def}, its value does not change if we replace $y$ by $y - R - S$. Therefore, we may assume without loss of generality that 
$$ y = \bar \theta + \eps , $$
which does not change the estimators $\hat \pi_1$, $\hat \pi_1'$ and $\hat \pi_2$ output by the algorithm. 

Furthermore, we have
\begin{align}
\min_{\theta \in \cM_{V_0} ( \hat \pi_1, \hat \pi_2 ) \cup \cM_{V_0} ( \pir \circ \hat \pi_1, \hat \pi_2 ) } \| \theta - \theta^* \|_F^2 
= \min_{\theta \in \cM_{V_0} ( \hat \pi_1, \hat \pi_2 ) \cup \cM_{V_0} ( \pir \circ \hat \pi_1, \hat \pi_2 ) } \| \theta - \bar \theta \|_F^2 , \label{eq:min}
\end{align}
since if $\tilde \theta$ minimizes the left-hand side, then $\tilde \theta - R - S$ minimizes the right-hand side. Hence it suffices to show that the right-hand side of~\eqref{eq:min} is bounded by the desired rate. 

Note that by symmetry of the anti-Monge constraint,
\begin{align}
	\label{eq:jw}
	\cM_{V_0}(\pir \circ \hat \pi_1, \hat \pi_2) = {} & \cM_{V_0}(\hat \pi_1, \pirtwo \circ \hat \pi_2) \quad \text{and}\\
	\cM_{V_0}(\hat \pi_1, \hat \pi_2)
	= {} & \cM_{V_0}(\pir \circ \hat \pi_1, \pir \circ \hat \pi_2).
 \end{align}
In light of this, it is sufficient to show
\begin{align}
	\min_{\substack{ \pi_1 \in \{\id, \pir\}\\  \pi_2 \in \{\id, \pirtwo\}}} \| \bar \theta( \pi_1 \circ \hat \pi_1 ,  \pi_2 \circ \hat \pi_2) - \bar \theta \|_F^2
	\lesssim \big[ \sigma^2 + \sigma V(\bar \theta) \big]  \dimone \sqrt{\dimtwo \log  \dimone } ,  \label{eq:per-err}
\end{align}
which we do in the sequel. This gives an upper bound on~\eqref{eq:min} and thus completes the proof.

\subsubsection{Preliminaries.}

Before proceeding to proving \eqref{eq:per-err}, we start with some lemmas.

\begin{lemma} \label{lem:cen-sq}
For $\gamma \in \R^n$ such that $\sum_{k=1}^n \gamma_k = 0$, it holds that
$$
\sum_{k < \ell} (\gamma_k - \gamma_\ell)^2 
= n \sum_{k=1}^n \gamma_k^2 .
$$
\end{lemma}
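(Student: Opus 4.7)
The plan is to use a direct algebraic expansion combined with the zero-sum hypothesis. First I would symmetrize the sum by writing
\begin{equation}
\sum_{k < \ell} (\gamma_k - \gamma_\ell)^2 = \tfrac{1}{2} \sum_{k, \ell = 1}^{n} (\gamma_k - \gamma_\ell)^2,
\end{equation}
using the fact that the summand vanishes when $k = \ell$ and is symmetric in $k, \ell$.

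Next I would expand the square inside the double sum:
\begin{equation}
\tfrac{1}{2} \sum_{k, \ell=1}^n (\gamma_k^2 - 2 \gamma_k \gamma_\ell + \gamma_\ell^2) = \tfrac{1}{2}\left[\, n \sum_{k=1}^n \gamma_k^2 \;-\; 2 \Big(\sum_{k=1}^n \gamma_k\Big)\Big(\sum_{\ell=1}^n \gamma_\ell\Big) \;+\; n \sum_{\ell=1}^n \gamma_\ell^2 \,\right].
\end{equation}
Finally, applying the hypothesis $\sum_{k=1}^n \gamma_k = 0$ kills the cross term and leaves $n \sum_{k=1}^n \gamma_k^2$, as claimed. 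There is no real obstacle here — the identity is a one-line calculation once one symmetrizes the sum. The only thing to be careful about is the factor of $1/2$ from going between the ordered sum $\sum_{k<\ell}$ and the full sum $\sum_{k,\ell}$.
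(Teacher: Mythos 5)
Your calculation is correct; the symmetrization to a full double sum, expansion of the square, and use of the zero-sum hypothesis to kill the cross term are exactly the right steps. The paper itself only remarks that the lemma ``follows by inspection,'' so your argument simply writes out the one-line computation the authors left implicit.
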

The proof follows by inspection.


\begin{lemma} \label{lem:per-sort}
Let $f:[n] \times [n] \to \R$ be a symmetric bivariate function such that 
\begin{align}
f(i, m) \lor f(m, j) \le f(i, j) , \quad \text{ for all } 1 \le i \le m \le j \le n.
\label{eq:assum}
\end{align}
%
Let $\pi:[n] \to [n]$ be a permutation and $\tau \in \R$. Suppose that 
\begin{align}
f (i, j) \le \tau , \quad \text{ if } i < j \text{ and } \pi(i) > \pi(j) . \label{eq:assum-2}
\end{align}
Then we have $ f(\pi(i), i) \le \tau$ for all $i \in [n]$.
\end{lemma}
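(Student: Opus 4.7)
The plan is to first extract a more useful monotonicity statement from the hypothesis: I claim that $f(i',j') \le f(i,j)$ whenever $i \le i' \le j' \le j$. This follows from two applications of the assumption: taking $m = j'$ in the triple $(i', j', j)$ gives $f(i', j') \le f(i', j)$, and then taking $m = i'$ in $(i, i', j)$ gives $f(i', j) \le f(i, j)$. So the key structural fact is that $f$ is monotone under \emph{shrinking} the interval $[i,j]$. Consequently, to prove $f(\pi(i), i) \le \tau$ it suffices to exhibit an inversion $(a,b)$, meaning $a < b$ with $\pi(a) > \pi(b)$, whose interval $[a,b]$ contains $[\min(i,\pi(i)), \max(i,\pi(i))]$, because then the hypothesis on inversions gives $f(a,b) \le \tau$ and symmetry plus the monotonicity bullet above yields $f(\pi(i),i) = f(\min(i,\pi(i)), \max(i,\pi(i))) \le f(a,b) \le \tau$.

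By the symmetry of $f$ I can assume $i < \pi(i)$; the case $i > \pi(i)$ is handled by the mirror argument (swapping the roles of the two ends of the interval and using $\pi(\{1,\dots,\pi(i)\})$ in place of $\pi(\{\pi(i),\dots,n\})$ below). I will take $a = i$ and search for $b \ge \pi(i)$ with $\pi(b) < \pi(i)$, so that $(i,b)$ automatically satisfies $i < b$, $\pi(i) > \pi(b)$, and $[i,\pi(i)] \subseteq [i,b]$. To produce such a $b$, set $S \defn \pi(\{\pi(i), \pi(i)+1, \dots, n\})$, a set of cardinality $n-\pi(i)+1$. Since $i < \pi(i)$, the element $i$ lies outside $\{\pi(i),\dots,n\}$, so the value $\pi(i)$ is achieved by $\pi$ only at the index $i \notin \{\pi(i),\dots,n\}$; hence $\pi(i) \notin S$. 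If every element of $S$ were at least $\pi(i)$, then $S \subseteq \{\pi(i),\dots,n\}$, which has exactly $n-\pi(i)+1$ elements, forcing $S = \{\pi(i),\dots,n\}$ and contradicting $\pi(i) \notin S$. Thus some $b \in \{\pi(i),\dots,n\}$ has $\pi(b) < \pi(i)$, as needed.

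The main obstacle is precisely this pigeonhole step, which is delicate because one must use that $\pi(i)$ itself is not in $S$ in order to force a strict inequality; a slightly looser counting would only give $\pi(b) \le \pi(i)$, which is insufficient for an inversion. Finally, the degenerate case $\pi(i) = i$ is either vacuous (if $\pi$ is the identity nothing needs to be said about inversions) or handled identically: any inversion $(a,b)$ with $a \le i \le b$ provided by the above argument applied to some other index where $\pi$ is not a fixed point gives $f(i,i) \le f(a,b) \le \tau$ via the shrinking-monotonicity bullet, so no new ideas are required.
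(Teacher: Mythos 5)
Your argument for $i \ne \pi(i)$ is correct and is essentially the same pigeonhole used in the paper's proof, only phrased constructively (exhibit a covering inversion, then apply the shrinking-monotonicity consequence of \eqref{eq:assum} that you extract up front) rather than as a contradiction.

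Your treatment of the fixed-point case $\pi(i) = i$, however, does not work. You claim that a covering inversion $(a,b)$ with $a \le i \le b$ is produced ``by the above argument applied to some other index where $\pi$ is not a fixed point''; this can fail. With $n = 3$ and $\pi = (1,3,2)$, the unique inversion is $(2,3)$, which does not contain the fixed point $i = 1$. And when $\pi = \id$ there are no inversions at all, yet the conclusion $f(i,i) \le \tau$ is still being asserted, which \eqref{eq:assum} and \eqref{eq:assum-2} alone do not imply. Concretely, the symmetric function on $[3]\times[3]$ given by $f(1,1) = 2$, $f(2,2) = f(3,3) = 0$, $f(1,2) = f(1,3) = 3$, $f(2,3) = 1$ satisfies both hypotheses with the above $\pi$ and $\tau = 1$, yet $f(\pi(1),1) = f(1,1) = 2 > \tau$. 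The paper's own proof also silently skips the case $\pi(j) = j$; this is harmless there because the $f$ in the application, \eqref{eq:f-def}, is a sum of squared differences, so $f(i,i) = 0 \le \tau$ automatically. But you should not present the fixed-point bound as following from the inversion hypothesis --- it has to be invoked as a separate, application-specific fact (or added as the explicit hypothesis $f(i,i) \le \tau$ for all $i$).
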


\begin{proof}
Suppose that $f( \pi(j), j ) > \tau$ and $\pi(j) < j$ for some index $j \in [n]$. Since $\pi$ is a bijection, there must exist an index $i \le \pi(j) < j$ such that $\pi(i) > \pi(j)$. However, by~\eqref{eq:assum} we then have $$
f(i, j) \ge f( \pi(j), j ) > \tau , 
$$
which contradicts assumption~\eqref{eq:assum-2}. A similar argument yields a contradiction in the case that $f( \pi(j), j ) > \tau$ and $\pi(j) > j$. Therefore, we obtain that
$f( \pi(j), j ) \le \tau$ for all $j \in [n]$.
\end{proof}


Next, we study the quantity $\xi(i, j)$ used in the algorithm defined by~\eqref{eq:xi-def}.
Throughout the rest of the proof, we use the notation
\begin{align}
f(i, j) \defn \sum_{k=1}^{\dimtwo} ( \bar \theta_{i, k} - \bar \theta_{j, k}  )^2 .
\label{eq:f-def}
\end{align}

\begin{lemma} \label{lem:xi-prop}
Suppose that $y = \bar \theta + \eps$, where $\bar \theta$ has centered rows and columns, and $\eps$ has independent $\subG(C \sigma^2)$ entries with $\Var[ \eps_{i, j} ] = \sigma^2$.
Then it holds that for all distinct $i, j \in [\dimone]$, 
\begin{align}
\E[ \xi ( i, j ) ] = f(i, j) + 2(\dimtwo - 1) \sigma^2 , \label{eq:xi-exp}
\end{align}
and that with probability $1 -  \dimone^{-10}$, for all $i, j \in [\dimone]$,
\begin{align}
\big| \xi ( i, j ) - \E[ \xi ( i, j ) ] \big| 
\le \tau \defn C \big[ \sigma^2 + \sigma V(\bar \theta) \big] \sqrt{\dimtwo \log  \dimone } ,
\label{eq:xi-dev}
\end{align}
where $C>0$ is a universal constant and $V(\bar \theta)$ is defined as in~\eqref{eq:def-v}.
\end{lemma}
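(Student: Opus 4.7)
The plan is to split \( \xi(i,j) - \E[\xi(i,j)] \) into a \emph{linear} and a \emph{quadratic} contribution in the noise, control each separately by a sub-Gaussian tail bound (for the linear part) and the Hanson-Wright inequality (for the quadratic part), and then union bound over the \( O(\dimone^2) \) pairs \( (i, j) \).

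First I would set \( a_k \defn \bar \theta_{i, k} - \bar \theta_{j, k} \) and \( \delta_k \defn \eps_{i, k} - \eps_{j, k} \); the \( \delta_k \) are independent with variance proxy \( \lesssim \sigma^2 \) and variance \( 2 \sigma^2 \). Because rows of \( \bar \theta \) are centered, \( \sum_k a_k = 0 \), so
\begin{align}
\xi(i, j) = \sum_{k=1}^{\dimtwo} \big( a_k + \delta_k - \bar \delta \big)^2 = \sum_k a_k^2 + 2 \sum_k a_k \delta_k + \sum_k (\delta_k - \bar \delta)^2,
\end{align}
where \( \bar \delta \defn \dimtwo^{-1} \sum_\ell \delta_\ell \) and I used \( \sum_k a_k \bar \delta = 0 \). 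Taking expectations, the cross term vanishes, \( \sum_k a_k^2 = f(i, j) \), and the quadratic form \( \sum_k (\delta_k - \bar \delta)^2 \) can be written as \( \delta^\top P \delta \) with \( P = I - \frac{1}{\dimtwo} \AOtwo \AOtwo^\top \) of rank \( \dimtwo - 1 \), so its expectation is \( 2 \sigma^2 (\dimtwo - 1) \). This gives \eqref{eq:xi-exp}.

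For the deviation, the key \emph{a priori} bound is \( f(i, j) \lesssim \dimtwo V(\bar \theta)^2 \). I would prove this by applying Lemma~\ref{lem:c} to the anti-Monge matrix \( \bar \theta \in \cM \), which yields the decomposition \( \bar \theta = R' + S' + B' \) with \( R' \) constant across rows, \( S' \) constant across columns, and \( \| B' \|_\infty \le V(\bar \theta) \). Then \( a_k = (R'_{i, k} - R'_{j, k}) + (B'_{i, k} - B'_{j, k}) \); the first summand is constant in \( k \) (call it \( c \)), and the second satisfies \( |B'_{i, k} - B'_{j, k}| \le V(\bar \theta) \). Using \( \sum_k a_k = 0 \) forces \( |c| \le V(\bar \theta) \), so \( |a_k| \le 2 V(\bar \theta) \) entrywise and hence \( f(i, j) \le 4 \dimtwo V(\bar \theta)^2 \). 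For the linear term, \( \sum_k a_k \delta_k \) is sub-Gaussian with variance proxy \( \lesssim \sigma^2 f(i, j) \lesssim \sigma^2 \dimtwo V(\bar \theta)^2 \), so a Hoeffding-type tail bound gives
\begin{align}
\Big| 2 \sum_k a_k \delta_k \Big| \lesssim \sigma V(\bar \theta) \sqrt{\dimtwo \log \dimone}
\end{align}
with probability \( 1 - \dimone^{-20} \). For the quadratic term, I would invoke Hanson-Wright: \( \| P \|_F^2 = \dimtwo - 1 \) and \( \| P \| = 1 \), so
\begin{align}
\big| \delta^\top P \delta - \E [ \delta^\top P \delta ] \big| \lesssim \sigma^2 \sqrt{\dimtwo \log \dimone} + \sigma^2 \log \dimone
\end{align}
with probability \( 1 - \dimone^{-20} \).

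Adding these two tail bounds yields \( |\xi(i, j) - \E[\xi(i, j)]| \lesssim [\sigma^2 + \sigma V(\bar \theta)] \sqrt{\dimtwo \log \dimone} \) for fixed \( (i, j) \); a union bound over the at most \( \dimone^2 \) pairs loses a factor \( \dimone^2 \), giving the stated probability \( 1 - \dimone^{-10} \). The main obstacle I anticipate is the deterministic estimate \( f(i, j) \lesssim \dimtwo V(\bar \theta)^2 \): since \( \bar \theta \) itself need not satisfy \( \|\bar \theta\|_\infty \lesssim V(\bar \theta) \), it is essential to exploit the row-centering in conjunction with Lemma~\ref{lem:c} to cancel the constant-row component \( R' \) when computing row differences, which is precisely what makes the bound scale with \( V(\bar \theta) \) rather than \( \| \bar \theta \|_\infty \).
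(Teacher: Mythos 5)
Your proposal is correct and follows essentially the same route as the paper: expand $\xi(i,j)$ into a constant part $f(i,j)$, a linear part $2\sum_k a_k\delta_k$, and a centered quadratic part in the noise; compute the expectation via the rank of the centering projection; control the linear part by a sub-Gaussian tail and the quadratic part by Hanson--Wright; bound $f(i,j)\lesssim \dimtwo V(\bar\theta)^2$; and union bound over pairs. The only genuine point of difference is how you establish $f(i,j)\lesssim \dimtwo V(\bar\theta)^2$: you route through the structural decomposition of Lemma~\ref{lem:c} and the entrywise bound $\|B'\|_\infty=V(\bar\theta)$, whereas the paper applies the algebraic identity of Lemma~\ref{lem:cen-sq}, writing $f(i,j)=\frac{1}{\dimtwo}\sum_{k<\ell}(\bar\theta_{i,k}-\bar\theta_{j,k}-\bar\theta_{i,\ell}+\bar\theta_{j,\ell})^2$ and then bounding each summand directly by $V(\bar\theta)$ from the anti-Monge inequalities; both are valid and yield the same estimate up to a constant. (A cosmetic second difference: you apply Hanson--Wright once to $\delta^\top P\delta$, while the paper splits the quadratic form as $\sum_k\delta_k^2 - \frac{1}{\dimtwo}(\sum_k\delta_k)^2$ and handles each piece via Lemma~\ref{lem:subg-l2}; these are equivalent.)
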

The proof of the lemma is deferred to Section~\ref{sec:pf-xi}.

Next, we study properties of the expectation $\E[ \xi(i, j) ]$, or equivalently $f(i, j)$, thanks to~\eqref{eq:xi-exp}.

\begin{lemma} \label{lem:f-prop}
It holds for all $1 \le i \le m \le j \le \dimone$ that
$
f(i, m) + f(m, j) \le f(i, j) .
$
\end{lemma}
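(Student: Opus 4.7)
The plan is to reduce the claim to a covariance-type inequality, then exploit the two properties available on $\bar \theta$ — that it is anti-Monge and that its rows and columns are centered.

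First, I would fix $1 \le i \le m \le j \le \dimone$ and, for each $k \in [\dimtwo]$, set
\begin{align}
a_k \defn \bar\theta_{m,k} - \bar\theta_{i,k}, \qquad b_k \defn \bar\theta_{j,k} - \bar\theta_{m,k},
\end{align}
so that $\bar\theta_{j,k} - \bar\theta_{i,k} = a_k + b_k$. Expanding the square and using the definition \eqref{eq:f-def} yields
\begin{align}
f(i,j) = \sum_{k=1}^{\dimtwo} (a_k + b_k)^2 = f(i,m) + f(m,j) + 2\sum_{k=1}^{\dimtwo} a_k b_k.
\end{align}
Hence it suffices to show $\sum_{k=1}^{\dimtwo} a_k b_k \ge 0$.

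Next, I would extract the two structural facts we need. Since $\bar\theta \in \cM$, the anti-Monge inequality $\Done \bar\theta \Dtwo^\top \ge 0$ applied columnwise implies that for any fixed pair of row indices $p < q$, the difference $k \mapsto \bar\theta_{q,k} - \bar\theta_{p,k}$ is nondecreasing in $k$ (this is the one-dimensional content of the anti-Monge condition, obtained by summing the $2\times 2$ minor inequality in the column direction). Therefore both sequences $(a_k)$ and $(b_k)$ are nondecreasing. Moreover, the row-centering assumption on $\bar\theta$ gives $\sum_k \bar\theta_{i,k} = \sum_k \bar\theta_{m,k} = \sum_k \bar\theta_{j,k} = 0$, so
\begin{align}
\sum_{k=1}^{\dimtwo} a_k = 0 = \sum_{k=1}^{\dimtwo} b_k.
\end{align}

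Finally, I would conclude by Chebyshev's sum inequality: for two nondecreasing sequences $(a_k)$ and $(b_k)$ of the same length $\dimtwo$,
\begin{align}
\dimtwo \sum_{k=1}^{\dimtwo} a_k b_k \;\ge\; \Big( \sum_{k=1}^{\dimtwo} a_k \Big) \Big( \sum_{k=1}^{\dimtwo} b_k \Big) = 0,
\end{align}
which gives $\sum_k a_k b_k \ge 0$, as required. The main (mild) obstacle is simply recognizing that the two ingredients — monotonicity of row differences from the anti-Monge shape constraint, and zero-sum rows from the centering reduction performed just before the lemma — combine exactly to trigger Chebyshev's inequality; no additional estimate is needed.
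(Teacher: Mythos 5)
Your proof is correct, and it takes a genuinely different route from the paper's. The paper first invokes Lemma~\ref{lem:cen-sq} to rewrite $f(i,j)$ as a double sum over pairs $k<\ell$ of squared second differences $\left( \bar\theta_{i,k} - \bar\theta_{j,k} - \bar\theta_{i,\ell} + \bar\theta_{j,\ell} \right)^2$, and then, for each fixed pair $(k,\ell)$, uses the anti-Monge property to see that the three bracketed second differences (for $(i,m)$, $(m,j)$, and $(i,j)$) are nonnegative and sum telescopically, so that $a^2 + b^2 \le (a+b)^2$ applies term by term. You instead stay with single-index sums, expand $f(i,j) = f(i,m) + f(m,j) + 2\sum_k a_k b_k$ directly, and reduce everything to the cross term $\sum_k a_k b_k \ge 0$, which you get from Chebyshev's sum inequality once you observe that $(a_k)$ and $(b_k)$ are both nondecreasing (anti-Monge, via telescoping) and both mean zero (row centering). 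The two arguments are closely related under the hood — Chebyshev's sum inequality is proved by exactly the same pairwise-difference identity underlying Lemma~\ref{lem:cen-sq}, namely $\dimtwo \sum_k a_k b_k - \big(\sum_k a_k\big)\big(\sum_k b_k\big) = \frac{1}{2}\sum_{k,\ell}(a_k - a_\ell)(b_k - b_\ell)$ — but your presentation is more direct and self-contained: it avoids the detour through Lemma~\ref{lem:cen-sq} and packages the whole nonnegativity argument into a single appeal to a classical inequality. The paper's version, on the other hand, makes the termwise comparison of variations explicit, which is the same device that reappears in the proof of Proposition~\ref{prop:low-rank-approx}, so there is some economy of exposition in its choice.
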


\begin{proof}
Fix indices $1 \le i \le m \le j \le \dimone$. 
Since all the row sums of $\bar \theta$ are zero by assumption, it follows from Lemma~\ref{lem:cen-sq} that
\begin{align}
f(i, j) = \sum_{k=1}^{\dimtwo} \big( \bar \theta_{i, k} - \bar \theta_{j, k}  \big)^2
= \frac{1}{\dimtwo} \sum_{k < \ell} \big( \bar \theta_{i, k} - \bar \theta_{j, k} - \bar \theta_{i, \ell} + \bar \theta_{j, \ell}  \big)^2 .
\label{eq:f-sum}
\end{align}
Note that we have 
$$
\big( \bar \theta_{i, k} - \bar \theta_{m, k} - \bar \theta_{i, \ell} + \bar \theta_{m, \ell} \big)
+ \big( \bar \theta_{m, k} - \bar \theta_{j, k} - \bar \theta_{m, \ell} + \bar \theta_{j, \ell} \big)
= \big( \bar \theta_{i, k} - \bar \theta_{j, k} - \bar \theta_{i, \ell} + \bar \theta_{j, \ell} \big) ,
$$
where each of the three bracketed terms is nonnegative because $\bar \theta$ is anti-Monge. 
Therefore, we obtain
$$
\big( \bar \theta_{i, k} - \bar \theta_{m, k} - \bar \theta_{i, \ell} + \bar \theta_{m, \ell} \big)^2
+ \big( \bar \theta_{m, k} - \bar \theta_{j, k} - \bar \theta_{m, \ell} + \bar \theta_{j, \ell} \big)^2
\le \big( \bar \theta_{i, k} - \bar \theta_{j, k} - \bar \theta_{i, \ell} + \bar \theta_{j, \ell} \big)^2 .
$$
This together with~\eqref{eq:f-sum} completes the proof.
\end{proof}

\subsubsection{Proof of main bound \eqref{eq:per-err}.}

We condition on the event of probability $1 - \dimone^{-10}$ that~\eqref{eq:xi-dev} holds. 
Consider the permutation estimator $\hat \pi_1$ defined as in the algorithm so that $\{ \xi ( i_0, \hat \pi_1^{-1}(i) ) \}_{i=1}^{\dimone}$ is nondecreasing.
Note that we made the decision of whether to consider \( i_0 \) or \( j_0 \) as an estimator for \( \hat \pi_1^{-1} (1) \) arbitrarily by demanding \( i_0 < j_0 \).
In the following, we make use of the fact that this orientation aligns with the assumption of \( \pi_1^\ast = \id \), that is, we use \( \pi_1^\ast (i_0) < \pi_1^\ast (j_0) \).
If the reverse inequality holds, then we may repeat the same proof with \( \hat \pi_1 \) replaced by \( \pir \circ \hat \pi_1 \), to obtain permutation guarantees for the reversed permutation instead.


We claim that for $f$ defined in~\eqref{eq:f-def},
\begin{align}
f(i, j) \le 12 \tau , \quad \text{ if } i < j \text{ and } \hat \pi_1(i) > \hat \pi_1(j) .
\label{eq:claim-1}
\end{align}

To establish the claim, we first consider any pair $(i, j)$ for which $i_0 < i < j$ and
$\hat \pi_1 (i) > \hat \pi_1 (j)$. Thus by the definition of $\hat \pi_1$, we have $\xi( i_0, i ) > \xi ( i_0, j )$. Then it follows from Lemma~\ref{lem:f-prop},~\eqref{eq:xi-exp} and~\eqref{eq:xi-dev} that 
$$
f(i, j) \le f(i_0, j) - f(i_0, i) = \E[ \xi( i_0, j ) ] - \E[ \xi ( i_0, i ) ]
\le \xi( i_0, j ) - \xi ( i_0, i ) + 2 \tau \le 2 \tau .
$$

Next, consider any pair $(i, j)$ where $i \le i_0$ and $\hat \pi_1 (i) > \hat \pi_1 (j)$.
By the definition of $(i_0, j_0)$ in~\eqref{eq:def-i0j0}, we have that $i_0 < j_0$ and $\xi( i_0, j_0 ) \ge \xi ( i, j_0 )$.
Together with Lemma~\ref{lem:f-prop},~\eqref{eq:xi-exp} and~\eqref{eq:xi-dev}, this implies that
\begin{align}
f(i, i_0)  \le f(i, j_0) - f(i_0, j_0) = \E [ \xi ( i, j_0 ) ] - \E [ \xi ( i_0, j_0 ) ] \le \xi( i, j_0 ) - \xi ( i_0, j_0 ) + 2 \tau \le 2 \tau .
\label{eq:i-i0}
\end{align}
Moreover, we have $\xi ( i, i_0 ) \ge \xi( i_0, j )$ since $\hat \pi_1 (i) >\hat \pi_1 (j)$. Therefore, by~\eqref{eq:xi-exp} and~\eqref{eq:xi-dev} it holds 
\begin{align}
f(i_0, j) - f(i, i_0) =  \E [ \xi ( i_0, j ) ] - \E [ \xi ( i, i_0 ) ] 
\le \xi ( i_0, j ) - \xi ( i, i_0 ) + 2 \tau \le 2 \tau . \qquad \label{eq:i-j}
\end{align}
Combining~\eqref{eq:i-i0} and~\eqref{eq:i-j}, we obtain
\begin{align*}
f(i, j) = \sum_{k=1}^{\dimtwo} \big( \bar \theta_{j, k} - \bar \theta_{i, k}  \big)^2 
&\le 2 \sum_{k=1}^{\dimtwo} \Big[ \big( \bar \theta_{j, k} - \bar \theta_{i_0, k}  \big)^2 + \big( \bar \theta_{i_0, k} - \bar \theta_{i, k}  \big)^2 \Big] \\
&= 2 f(i_0, j) + 2 f(i, i_0) \le 12 \tau .
\end{align*}
Therefore, claim~\eqref{eq:claim-1} is established.

Note that assumption~\eqref{eq:assum} of Lemma~\ref{lem:per-sort} holds in view of Lemma~\ref{lem:f-prop} and the fact that $f(i, j) \ge 0$. Hence claim~\eqref{eq:claim-1} and Lemma~\ref{lem:per-sort} together yield that for all $i \in [\dimone]$,
\begin{align}
f(\hat \pi_1(i), i) = \sum_{k=1}^{\dimtwo} \big( \bar \theta_{\hat \pi_1 (i), k} - \bar \theta_{i, k}  \big)^2 \le 12 \tau .
\end{align}
Summing over $i$, we conclude that
\begin{align}
\| \bar \theta(\hat \pi_1, \id) - \bar \theta \|_F^2 \le 12 \tau \dimone,
\end{align}
contingent on the assumption that for the ground truth permutation \( \pi_1^\ast \), we have \( (\pi_1^\ast)^{-1}(i_0) < (\pi_1^\ast)^{-1}(j_0) \).
If not, repeat the same proof with \( \hat \pi_1 \) replaced by \( \pir \circ \hat \pi_1 \) to obtain
\begin{align}
\| \bar \theta(\pir \circ \hat \pi_1, \id) - \bar \theta \|_F^2 \le 12 \tau \dimone.
\end{align}

Finally, the proof remains valid if we replace $\bar \theta$ and $y$ by their transposes, and switch the roles of row and column indices. Hence it also holds with probability $1 - \dimone^{-10}$ that
\begin{align}
	\| \bar \theta(\id, \hat \pi_2) - \bar \theta \|_F^2 \wedge \| \bar \theta(\id, \pirtwo \circ \hat \pi_2) - \bar \theta \|_F^2
	\lesssim \big[ \sigma^2 + \sigma V(\bar \theta) \big] \dimtwo \sqrt{\dimone \log  \dimone }  .
\end{align}
We then complete the proof of~\eqref{eq:per-err} by using the triangle inequality to see that for some choice of \( \bar \pi_1 \in \{ \id, \pir \} \), \( \bar \pi_2 \in \{ \id, \pirtwo \} \), it holds that
\begin{align*}
\| \bar \theta(\bar \pi_1 \circ \hat \pi_1, \bar \pi_2 \circ \hat \pi_2) - \bar \theta \|_F^2 
&\le 2 \| \bar \theta(\bar \pi_1 \circ \hat \pi_1, \bar \pi_2 \circ \hat \pi_2) - \bar \theta(\bar \pi_1 \circ \hat \pi_1, \id) \|_F^2 + 2 \| \bar \theta(\bar \pi_1 \circ \hat \pi_1, \id) - \bar \theta \|_F^2 \\
&= 2 \| \bar \theta(\id, \bar \pi_2 \circ \hat \pi_2) - \bar \theta \|_F^2 + 2 \| \bar \theta(\bar \pi_1 \circ \hat \pi_1, \id) - \bar \theta \|_F^2 \\
&\lesssim \big[ \sigma^2 + \sigma V(\bar \theta) \big]  \dimone \sqrt{\dimtwo \log  \dimone } .
\end{align*}

\subsection{Proof of Lemma \ref{lem:xi-prop}} \label{sec:pf-xi}

Recall that $y = \bar \theta + \eps$ where $\Var[ \eps_{i, j} ] = \sigma^2$, and recall the notation
\begin{align}
\xi{(i, j)} = \sum_{k=1}^{\dimtwo} \Big[ y_{i, k} - y_{j, k} - \frac{1}{\dimtwo} \sum_{\ell=1}^{\dimtwo} (y_{i, \ell} - y_{j, \ell}) \Big]^2 .
\end{align}
For the statement about its expectation, \eqref{eq:xi-exp}, we need to prove that for distinct $i, j \in [\dimone]$,
\begin{align}
\E[ \xi ( i, j ) ] = \sum_{k=1}^{\dimtwo} ( \bar \theta_{i, k} - \bar \theta_{j, k}  )^2  + 2(\dimtwo - 1) \sigma^2 .
\label{eq:toprove1}
\end{align}
For the statement about its deviation, \eqref{eq:xi-dev}, we claim that it suffices to prove that with probability $1 -  \dimone^{-12}$,
\begin{align}
\big| \xi ( i, j ) - \E[ \xi ( i, j ) ] \big| 
\lesssim \sigma^2 \sqrt{\dimtwo \log  \dimone} + \sigma 
\Big[ \sum_{k=1}^{\dimtwo} ( \bar \theta_{i, k} - \bar \theta_{j, k}  )^2  \Big]^{1/2} 
\sqrt{ \log  \dimone }  . \label{eq:toprove}
\end{align}
To see this, note that Lemma~\ref{lem:cen-sq} gives 
\begin{align*}
\sum_{k=1}^{\dimtwo} ( \bar \theta_{i, k} - \bar \theta_{j, k}  )^2 
= \frac 1{\dimtwo} \sum_{k < \ell} ( \bar \theta_{i, k} - \bar \theta_{j, k}  - \bar \theta_{i, \ell} + \bar \theta_{j, \ell} )^2  
\le \sum_{k < \ell} \frac 1{\dimtwo} V_0^2 \le \dimtwo V(\bar \theta)^2 ,
\end{align*}
where we used that $| \bar \theta_{i, k} + \bar \theta_{j, \ell} - \bar \theta_{j, k}  - \bar \theta_{i, \ell} | \le V_0$. 
Plugging this bound into~\eqref{eq:toprove} and applying a union bound over all $i, j \in [\dimone]$ then completes the proof.

The claims~\eqref{eq:toprove1} and~\eqref{eq:toprove} can be simplified as follows. For distinct $i, j \in [\dimone]$, we let
$$
n = \dimtwo, \quad
x = y_{i, \cdot} - y_{j, \cdot} , \quad
\gamma = \bar \theta_{i, \cdot} - \bar \theta_{j, \cdot} , \quad
\delta = \eps_{i, \cdot} - \eps_{j, \cdot} ,  \quad
\text{and} \quad \zeta = \sum_{k=1}^n \Big( x_k - \frac 1n \sum_{\ell=1}^n x_\ell \Big)^2 .
$$
Note that $\delta$ has independent $\subG(C \sigma^2)$ entries and $\E[ \delta_k^2 ] = 2 \sigma^2$.
We need to prove that
\begin{align}
\E[ \zeta ] 
= \sum_{k=1}^n \gamma_k^2 + 2(n - 1) \sigma^2 ,
\end{align}
and that with probability $1 - \dimone^{-12}$, 
\begin{align}
\big| \zeta - \E[ \zeta ] \big| \lesssim \sigma^2 \sqrt{n \log  \dimone} + \sigma \Big( \sum_{k=1}^{n} \gamma_k^2  \Big)^{1/2} \sqrt{\log  \dimone } .
\end{align}



Recall that all the rows and columns of $\bar \theta$ are centered by assumption, so we have $\sum_{k=1}^n \gamma_k = 0$. Using this, we get the following Hoeffding decomposition
\begin{align}
\zeta &= \sum_{k=1}^n \Big( x_k - \frac 1n \sum_{i=1}^n x_i \Big)^2 
= \sum_{k=1}^n \gamma_k^2 + 
\sum_{k=1}^n \delta_k^2
- \frac{1}{n} \Big( \sum_{k = 1}^n \delta_k \Big)^2 + 
2 \sum_{k=1}^n \gamma_k \delta_k  .
\end{align}
In particular, it follows that
\begin{align}
\E [ \zeta ] = \|\gamma\|_2^2 + 2 (n-1) \sigma^2 .
\end{align}
Moreover, we have that with probability $1 - \dimone^{-12}$,
\begin{align}
\big| \zeta - \E [ \zeta ] \big| &\le \Big| \sum_{k=1}^n \delta_k^2 - 2 n \sigma^2 \Big|
+ \Big| \frac{1}{n} \Big( \sum_{k = 1}^n \delta_k \Big)^2 - 2 \sigma^2 \Big| 
+ 2 \Big| \sum_{k=1}^n \gamma_k \delta_k  \Big| \\
&\lesssim \sigma^2 \sqrt{n \log \dimone} + \sigma \|\gamma\|_2 \sqrt{\log \dimone} ,
\end{align}
where concentration of the first two terms is due to Lemma~\ref{lem:subg-l2}, and the last term is $\subG(C \sigma^2 \|\gamma\|_2^2)$.
This completes the proof.

\subsection{Proof of Theorem \ref{thm:svt}}

This proof technique was developed by~\cite{Cha15, ShaBalGunWai17}, and our presentation follows that of Theorem~2 of~\cite{ShaBalGunWai17}. 
We assume without loss of generality that the underlying true permutations $\pi_1^*$ and $\pi_2^*$ are the identities throughout the proof. Let $\| \cdot \|$ denotes the operator norm of a matrix. It is well known (see Theorem 4.4.5 of~\cite{Ver18}) that $\|\eps\| \le C_1 \sigma \sqrt{\dimone}$ with probability at least $1 - \exp(-\dimone)$. We condition on this event in the sequel. 

Recall that the singular value decomposition of $y$ is
$$
y = \sum_{i=1}^{\dimtwo} \lambda_i u_i v_i^\top ,
$$
where $\lambda_1, \dots, \lambda_{\dimtwo}$ are ordered non-increasingly, and the SVT estimator is defined as
\begin{align}
\thetasvt \defn \sum_{i=1}^{\dimtwo} \1 \{ \lambda_i > \rho \}  \lambda_i u_i v_i^\top ,
\end{align}
where we choose $\rho = 2 C_1 \sigma \sqrt{\dimone}$.
Moreover, write the singular value decomposition of $\theta^*$ as
$$
\theta^* = \sum_{i=1}^{\dimtwo} \lambda^*_i u^*_i (v^*_i)^\top ,
$$
where $\lambda^*_1, \dots, \lambda^*_{\dimtwo}$ are ordered non-increasingly. 
Let $s$ be the number of singular values of $\theta^*$ that are larger than $C_1 \sigma \sqrt{ \dimone }$, and define 
$$
\theta_s = \sum_{i=1}^{s} \lambda^*_i u^*_i (v^*_i)^\top .
$$

Note that the for each $i > s$, by Weyl’s inequality, we have 
$$
\lambda_i \le \lambda^*_i + \|\eps\| \le C_1 \sigma \sqrt{ \dimone } + C_1 \sigma \sqrt{ \dimone } = \rho .
$$
Therefore, $\thetasvt$ has rank at most $s$, and so has $\theta_s$. It follows that
\begin{align*}
\| \thetasvt - \theta^* \|_F \le \| \thetasvt - \theta_s \|_F + \| \theta_s - \theta^* \|_F 
\le \sqrt{ 2s } \, \| \thetasvt - \theta_s \| + \Big[ \sum_{i=s+1}^{\dimtwo} (\lambda^*_i)^2 \Big]^{1/2} .
\end{align*}
Moreover, it holds that
$$
\| \thetasvt - \theta_s \| \le \| \thetasvt - y \| + \| \eps \| + \| \theta^* - \theta_s \| \le 4 C_1 \sigma \sqrt{ \dimone } . 
$$
Plugging this bound into the previous one, we obtain
\begin{align}
\| \thetasvt - \theta^* \|_F^2 \lesssim s \sigma^2 \dimone + \sum_{i=s+1}^{\dimtwo} (\lambda^*_i)^2  \lesssim \sum_{i=1}^{\dimtwo} \big[ \sigma^2 \dimone \land (\lambda^*_i)^2 \big] .  \label{eq:sum-min}
\end{align}

For any integer $r \ge 6$, Proposition~\ref{prop:low-rank-approx} yields a rank-$r$ matrix $\tilde{\theta} \in \R^{\dimone \times \dimtwo}$ such that 
$$
\|\tilde{\theta} - \theta^* \|_F^2 \lesssim \frac{\dimone \dimtwo}{r^3} V(\theta^*)^2 .
$$
Since $\theta_r = \sum_{i=1}^{r} \lambda^*_i u^*_i (v^*_i)^\top$ is by definition the best rank-$r$ approximation of $\theta^*$ in the Frobenius norm, we see that
$$
\sum_{i=r+1}^{\dimtwo} (\lambda^*_i)^2 = \| \theta_r - \theta^* \|_F^2 \lesssim \frac{\dimone \dimtwo}{r^3} V(\theta^*)^2 .
$$
Hence it follows from~\eqref{eq:sum-min} that
$$
\| \thetasvt - \theta^* \|_F^2 \lesssim r \sigma^2 \dimone +  \frac{\dimone \dimtwo}{r^3} V(\theta^*)^2 .
$$
Choosing the optimal $r^\ast$ and considering the boundary cases \( r^\ast < 1 \) and \( r^\ast > \dimtwo \) then yields 
$$
\frac{1}{\dimone \dimtwo} \| \thetasvt - \theta^* \|_F^2 \lesssim \left[ \frac{\sigma^2}{\dimtwo} + \frac{ \sigma^{3/2} V(\theta^*)^{1/2} }{ \dimtwo^{3/4} } \right] \wedge \sigma^2.
$$
By repeating the same proof keeping track of the failure probability of the statement, we can obtain bounds in expectation as well.

\subsection{Proof of Proposition~\ref{prop:low-rank-approx}}

By rescaling, we may assume that $V( \theta ) = 1$ without loss of generality. Lemma~\ref{lem:c} yields that $\theta = R + S + B$, where $R$ and $S$ are rank-one matrices, and $B$ is anti-Monge, bivariate isotonic (i.e., $B$ has nondecreasing rows and columns).
Additionally, we have $B_{i, 1} = B_{1, j} = 0$ for $i \in [\dimone], j \in [\dimtwo]$, and $B_{\dimone, \dimtwo} = 1$. It suffices to find a low-rank approximation of the matrix $B$.

\subsubsection{Subdivision.}

We claim that there exist two increasing sequences of indices $\{ i_k \}_{k=1}^{r + 1}$ and $\{ j_\ell \}_{\ell=1}^{2 r}$ such that 
\begin{itemize}
%
\item
$0 \le i_k - i_{k-1} \le \dimone / r$ for $k \in [r + 1]$;

\item
$0 \le j_\ell - j_{\ell-1} \le \dimtwo / r$ and $B_{\dimone, j_\ell} - B_{\dimone, j_{\ell-1} + 1} \le 1/r$ for $\ell \in [2r]$.
\end{itemize}
For $\{i_k\}_{k=1}^{r+1}$, it suffices to choose $i_0 = 0$, $i_k = i_{k-1} + \lfloor \dimone / r \rfloor$ for $k \in [r]$ and $i_{r+1} = \dimone$. For $\{ j_\ell \}_{\ell=1}^{2 r}$, since $B_{\dimone, 1} = 0$, $B_{\dimone, \dimtwo} = 1$ and $B$ has nondecreasing rows, there is an increasing sequence of indices $\{j_\ell'\}_{\ell = 1}^r$ such that $B_{\dimone, j_\ell'} - B_{\dimone, j_{\ell-1}' + 1} \le 1/r$ for all $j \in [r]$. Moreover, by inserting (at most) another $r$ indices between the indices $j_\ell'$ to obtain a new sequence $\{j_\ell\}_{\ell = 1}^{2r}$, we can guarantee that not only $B_{\dimone, j_\ell} - B_{\dimone, j_{\ell-1} + 1} \le 1/r$, but also $j_{\ell} - j_{\ell - 1} \le \dimtwo / r$.

\subsubsection{Low-rank approximation.}

Let $\{ i_k \}_{k=1}^{r + 1}$ and $\{ j_\ell \}_{\ell=1}^{2 r}$ be chosen so that the above conditions are satisfied. We define a matrix $X \in \R^{\dimone \times \dimtwo}$ by setting $X_{i, j} = B_{i, j_{\ell-1}+1}$ for all $i \in [\dimone]$ and $j_{\ell-1} < j \le j_\ell$ where $\ell \in [2r]$. By definition, all columns of $X$ with indices in $(j_{\ell-1}, j_\ell]$ are the same, so $X$ has rank at most $2r$.

Furthermore, we define a matrix $Y \in \R^{\dimone \times \dimtwo}$ by setting $Y_{i, j} = (B-X)_{i_{k-1}+1, j}$ for all $j \in [\dimtwo]$ and $i_{k-1} < i \le i_k$ where $k \in [r+1]$. Similarly, all rows of $Y$ with indices in $(i_{k-1}, i_k]$ are the same, so $Y$ has rank at most $r+1$.

It remains to bound $\|\Delta \|_F^2$ where $\Delta = X+Y - B$. First, let us focus on a block with double indices in $(i_{k-1}, i_k] \times (j_{\ell - 1}, j_\ell]$. \emph{On each of these blocks}, by definition it holds that:
\begin{itemize}
\item
$X$ has constant rows, and $Y$ has constant columns;

\item
the first column of $Y$ is zero, and the first column and first row of $\Delta$ is zero.
\end{itemize}
Then by Lemma~\ref{lem:c} (applied with the corresponding blocks of $(B, X, Y, \Delta)$ in place of $(\theta, R, S, B)$), we see that $\Delta$ is bivariate isotonic on each of these block, and 
$$
\Delta_{i_k, j_\ell} 
= V \big( B_{i_{k-1}+1 : i_k,  j_{\ell - 1} + 1: j_\ell } \big) 
= B_{i_{k-1}+1 ,  j_{\ell - 1} + 1 } + B_{ i_k,  j_\ell } - B_{i_{k-1}+1 ,  j_\ell } - B_{ i_k,  j_{\ell - 1} + 1 } .
$$
Therefore, it follows that
\begin{align*}
\sum_{i=i_{k-1} + 1}^{i_k} \sum_{j=j_{\ell-1} + 1}^{j_\ell} \Delta_{i, j}^2 
&\le (i_k - i_{k-1}) ( j_{\ell} - j_{\ell - 1}) \Delta_{i_k, j_\ell}^2 \\
&\le \frac{\dimone \dimtwo}{r^2} \Delta_{i_k, j_\ell}^2 
= \frac{\dimone \dimtwo}{r^2} V \big( B_{i_{k-1}+1 : i_k,  j_{\ell - 1} + 1: j_\ell } \big)^2 ,
\end{align*}
where the second inequality holds thanks to the above choice of indices. 

Summing over all the blocks, we obtain
$$
\| \Delta \|_F^2
= \sum_{k=1}^{r+1} \sum_{\ell=1}^{2r} \sum_{i=i_{k-1} + 1}^{i_k} \sum_{j=j_{\ell-1} + 1}^{j_\ell} \Delta_{i, j}^2 
\le \frac{\dimone \dimtwo}{r^2} \sum_{k=1}^{r+1} \sum_{\ell=1}^{2r} V \big( B_{i_{k-1}+1 : i_k,  j_{\ell - 1} + 1: j_\ell } \big)^2 .
$$

\subsubsection{Bounding the sum of variations.} 
It remains to bound the above sum. Recall that a telescoping sum gives
$$
\sum_{k=1}^{r+1} V \big( B_{i_{k-1}+1 : i_k,  j_{\ell - 1} + 1: j_\ell } \big)
= V \big( B_{1 : \dimone,  j_{\ell - 1} + 1: j_\ell } \big) 
\stackrel{(i)}{=} B_{ \dimone,  j_\ell } - B_{ \dimone,  j_{\ell - 1} + 1 } 
\stackrel{(ii)}{\le} 1/r ,
$$
where $(i)$ holds because the first row and first column of $B$ are zero, and $(ii)$ holds because of our choice of $\{j_\ell\}_{\ell = 1}^{2r}$. As a result, it holds by H\"older's inequality that
$$
\sum_{\ell=1}^{2r} \sum_{k=1}^{r+1} V \big( B_{i_{k-1}+1 : i_k,  j_{\ell - 1} + 1: j_\ell } \big)^2 
\le \sum_{\ell=1}^{2r} V \big( B_{1 : \dimone,  j_{\ell - 1} + 1: j_\ell } \big)^2 
\le 2r (1/r)^2 = 2/r .
$$
We therefore obtain
$
\|\Delta\|_F^2 \le \frac{2 \dimone \dimtwo}{r^3}.
$
The proof is complete since $\Delta = X + Y - B = R + S + X + Y - \theta$, where the matrix $R + S + X + Y$ has rank at most $3r + 3$.

%

\section*{Acknowledgment}

We thank Adityanand Guntuboyina for discussing their concurrent work with us. PR was supported by NSF awards IIS-1838071, DMS-1712596 and DMS-TRIPODS-1740751; ONR grant N00014-17- 1-2147 and grant 2018-182642 from the Chan Zuckerberg Initiative DAF. ER was supported by an NSF MSPRF DMS-1703821.

\appendix

\section{Existing results}

In this appendix, we state some existing results that are used in our proofs.

%

The following result is Talagrand's majorizing measure theorem~\cite{Tal14}. See also Theorem~8.6.1 of~\cite{Ver18}.

\begin{theorem}[Talagrand's majorizing measure theorem] \label{thm:maj}
For any set $\cM \subset \R^d$ and a Gaussian random vector $\eps \sim \cN(0, I_d)$, we have
$$
\E \sup_{\theta \in \cM} \langle \eps, \theta \rangle \asymp \gamma_2 ( \cM ) ,
$$
where $\gamma_2(\cdot)$ denotes Talagrand's $\gamma_2$ functional. 
\end{theorem}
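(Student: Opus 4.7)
The statement I am asked to justify is Talagrand's majorizing measure theorem, a deep result whose full proof occupies a sizeable portion of Talagrand's monograph. My proposal is to split the two-sided equivalence into its two halves, which differ dramatically in difficulty, and to sketch the standard approach to each.

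For the upper bound $\E \sup_{\theta \in \cM} \langle \eps, \theta \rangle \lesssim \gamma_2(\cM)$, I would use the generic chaining argument. Fix any admissible sequence of subsets $(T_n)_{n \ge 0} \subset \cM$ with $|T_0| = 1$ and $|T_n| \le 2^{2^n}$, and for each $\theta \in \cM$ let $\pi_n(\theta) \in T_n$ denote a closest point in the Euclidean distance $d$ induced on $\cM$. The telescoping identity $\theta - \pi_0(\theta) = \sum_{n \ge 1} (\pi_n(\theta) - \pi_{n-1}(\theta))$ gives $\langle \eps, \theta - \pi_0(\theta) \rangle = \sum_n \langle \eps, \pi_n(\theta) - \pi_{n-1}(\theta) \rangle$. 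Each increment is a centered Gaussian of standard deviation at most $d(\pi_n(\theta), \pi_{n-1}(\theta))$. Since there are at most $|T_n|\,|T_{n-1}| \le 2^{2^{n+1}}$ possible pairs at level $n$, a standard Gaussian maximal inequality controls the supremum of the $n$th increment by $C\, 2^{n/2} \sup_\theta d(\pi_n(\theta), \pi_{n-1}(\theta))$. Summing over $n$ and taking the infimum over admissible sequences yields the $\gamma_2$ bound.

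For the lower bound $\gamma_2(\cM) \lesssim \E \sup_{\theta \in \cM} \langle \eps, \theta \rangle$, I would follow Talagrand's partitioning scheme. The plan is to construct, by induction on $n$, a partition $\mathcal{A}_n$ of $\cM$ with $|\mathcal{A}_n| \le 2^{2^n}$ such that every piece $A \in \mathcal{A}_n$ admits a controlled ``diameter contribution'' witnessed by $\E \sup_{\theta \in A} \langle \eps, \theta \rangle$ itself. The engine is the Sudakov minoration: inside each cell one locates a large enough well-separated subset to extract, via concentration of Gaussian measure, an expected supremum proportional to the separation times $2^{n/2}$. Iteratively refining the partitions and using a growth condition argument to piece these local lower bounds into a global one reconstructs an admissible sequence realizing $\gamma_2(\cM)$ up to a universal constant.

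The principal obstacle is, of course, this second direction: the upper bound is a clean chaining computation, whereas the lower bound is exactly the heart of Talagrand's majorizing measure theorem and needs the full partitioning/growth-condition machinery. Since in this paper the result is invoked only as a black box to convert expected suprema into $\gamma_2$ functionals (Propositions~\ref{prop:sup-id} and~\ref{prop:sup-per}), I would simply cite~\cite{Tal14} rather than reproduce the proof, which is what appears to be done here.
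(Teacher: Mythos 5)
Your proposal correctly identifies that this is a classical, deep result which the paper simply cites from Talagrand's monograph \cite{Tal14} (see also Theorem~8.6.1 of \cite{Ver18}) rather than proving; indeed, the theorem is stated in the paper's appendix on ``Existing results'' with no proof given. Your two-sided sketch (generic chaining for the upper bound, partitioning with growth conditions and Sudakov minoration for the lower bound) is an accurate summary of the standard argument, and your conclusion---that one should cite the result as a black box, which is all the paper needs for Propositions~\ref{prop:sup-id} and~\ref{prop:sup-per}---matches the paper's treatment exactly.
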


The following theorem gives a tail bound on the supremum of a sub-Gaussian process~\cite{Tal14, Ver18}.

\begin{theorem}[Generic chaining tail bound] \label{thm:tail}
Consider a sub-Gaussian vector $\eps \sim \subG_d ( \sigma^2 )$. For any set $\cM \subset \R^d$ and $s>0$, it holds with probability at least $1 - 2 \exp( - s^2 )$ that
$$
\sup_{\theta \in \cM} \langle \eps, \theta \rangle \lesssim \sigma \big[ \gamma_2 ( \cM) + s \cdot \sup_{\theta \in \cM} \|\theta\|_2 \big] .
$$
\end{theorem}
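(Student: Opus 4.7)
The plan is to apply Talagrand's generic chaining to the centered sub-Gaussian process $X_\theta = \langle \eps, \theta \rangle$, which inherits from the definition of $\subG_d(\sigma^2)$ the one-dimensional tail bound $\p[|\langle \eps, u\rangle| \geq \sigma \|u\|_2 \sqrt{2t}] \leq 2e^{-t}$ for any fixed $u$. First I would fix an admissible sequence of partitions $(\cA_n)_{n\geq 0}$ of $\cM$, with $|\cA_0|=1$ and $|\cA_n| \leq 2^{2^n}$ for $n \geq 1$, that nearly attains the infimum in the definition of $\gamma_2$, so that $\sup_{\theta\in\cM}\sum_{n\geq 0} 2^{n/2}\diam(A_n(\theta)) \leq 2\gamma_2(\cM)$, where $A_n(\theta) \in \cA_n$ denotes the cell containing $\theta$. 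Picking a representative from each cell produces maps $\pi_n:\cM\to\cM$ with $\|\theta-\pi_n(\theta)\|_2 \leq \diam(A_n(\theta))$; by a standard truncation I can additionally arrange that these diameters decay geometrically in $n$.

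Next, I would apply the one-point sub-Gaussian tail to each chaining increment $u_n(\theta) \defn \pi_n(\theta)-\pi_{n-1}(\theta)$, which takes at most $|\cA_n|\cdot|\cA_{n-1}| \leq 2^{2^{n+1}}$ distinct values and satisfies $\|u_n(\theta)\|_2 \leq 2\diam(A_{n-1}(\theta))$. A union bound with level-dependent parameter $t_n = C(2^n + s^2)$, with $C$ chosen so that $\sum_{n\geq 1} 2^{2^{n+1}} e^{-t_n} \leq e^{-s^2}$, shows that on an event of probability at least $1-e^{-s^2}$, simultaneously for every $\theta\in\cM$ and every $n \geq 1$,
$$|\langle \eps, u_n(\theta)\rangle| \lesssim \sigma\,\diam(A_{n-1}(\theta))\,(2^{n/2} + s).$$
Telescoping $\theta - \pi_0(\theta) = \sum_{n\geq 1} u_n(\theta)$ and summing over $n$ then gives, on the same event,
$$\sup_{\theta\in\cM}|\langle\eps, \theta - \pi_0(\theta)\rangle| \lesssim \sigma\,\gamma_2(\cM) + \sigma s\,\sup_{\theta\in\cM}\|\theta-\pi_0(\theta)\|_2,$$
where the chaining term comes from $\sum_n 2^{(n-1)/2} \diam(A_{n-1}(\theta)) \lesssim \gamma_2(\cM)$ and the $s$-term uses geometric decay of the diameters.

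Finally, I would treat the base point $\pi_0(\theta) \equiv \theta_0$ (the unique element of the only cell of $\cA_0$) separately: the one-dimensional sub-Gaussian bound applied directly to $\langle \eps, \theta_0\rangle$ gives $|\langle\eps,\theta_0\rangle| \lesssim \sigma s\|\theta_0\|_2 \lesssim \sigma s \sup_\theta \|\theta\|_2$ with probability at least $1-e^{-s^2}$. Adding this to the chaining bound, using $\sup_\theta \|\theta-\theta_0\|_2 \leq 2\sup_\theta\|\theta\|_2$, and combining via a union bound yields the claim with total failure probability at most $2e^{-s^2}$.

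The main obstacle is the calibration of the union-bound parameters so that the two exponential scales balance correctly: the tail parameter $t_n$ must dominate $\log(|\cA_n|\cdot|\cA_{n-1}|) \asymp 2^{n+1}$ comfortably at every level, while the resulting per-level bound still splits cleanly into a $\sqrt{2^n}$-contribution (which sums to $\gamma_2(\cM)$) and an $s$-contribution (which must be summable against $\diam(A_{n-1}(\theta))$). The second subtle point is ensuring that $s$ multiplies only $\sup_\theta\|\theta\|_2$, not $\gamma_2(\cM)$; this is what forces the geometric-decay reparametrization of the admissible sequence at the outset.
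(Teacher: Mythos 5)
Your overall architecture (chain to obtain a deviation bound around the base point $\pi_0(\theta)\equiv\theta_0$, then bound $|\langle\eps,\theta_0\rangle|\lesssim\sigma s\|\theta_0\|_2$ separately and combine by a union bound) is exactly the structure of the paper's argument, except that the paper simply invokes Theorem~8.5.5 of \cite{Ver18} for the deviation step while you re-derive it by generic chaining. The difficulty is that your self-contained chaining argument has a genuine gap at precisely the step that distinguishes $\gamma_2(\cM)+s\cdot\diam(\cM)$ from $(1+s)\gamma_2(\cM)$: the claim that ``by a standard truncation'' you can arrange the cell diameters $\diam(A_n(\theta))$ to decay geometrically, so that $\sum_{n\ge 1}\diam(A_{n-1}(\theta))\lesssim\sup_\theta\|\theta-\theta_0\|_2$. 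No such reduction exists. With your uniform level parameters $t_n=C(2^n+s^2)$, the term multiplying $s$ is $\sigma s\sum_{n\ge 1}\diam(A_{n-1}(\theta))$, and this sum cannot in general be controlled by the diameter: take $\cM$ the unit Euclidean ball in $\R^d$. For any admissible sequence, level $n$ has at most $2^{2^n}$ cells, and cells of diameter at most $1/2$ cover at most a $2^{2^n}2^{-d}$ fraction of the volume, so for every $n\le\log_2(d/2)$ all but an exponentially small fraction of points $\theta$ have $\diam(A_n(\theta))>1/2$; for such $\theta$, $\sum_n\diam(A_n(\theta))\gtrsim\log d$ while $\diam(\cM)=2$. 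Consequently, for $s\gg\gamma_2(\cM)\asymp\sqrt d$ your argument yields a bound of order $\sigma s\log d$ rather than $\sigma s$, i.e., a strictly weaker statement than the theorem.

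The standard repair is not to reshape the partition but to split the chain at the level $n_0$ with $2^{n_0}\asymp 1+s^2$: for $n>n_0$ use tail parameters $\asymp 2^n$ (these already dominate $s^2$, so the union bound still sums to at most $e^{-s^2}$), and these levels contribute only the $\sigma\gamma_2(\cM)$ term; then control the coarse piece $\langle\eps,\pi_{n_0}(\theta)-\theta_0\rangle$ by a single union bound over the at most $2^{2^{n_0}}\le 2^{C(1+s^2)}$ points $\pi_{n_0}(\theta)$, each increment having norm at most $\diam(\cM)$, with tail parameter $\asymp 1+s^2$; this is the sole source of the $\sigma s\,\diam(\cM)\lesssim\sigma s\sup_{\theta\in\cM}\|\theta\|_2$ term. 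With that modification (or by simply citing the deviation inequality of \cite{Ver18}, as the paper does), the remainder of your proposal --- the calibration of the union bound, the telescoping, and the separate treatment of the base point with total failure probability $2e^{-s^2}$ --- goes through and gives the stated bound.
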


\begin{proof}
By Theorem~8.5.5 of~\cite{Ver18}, we have that for any $\theta^* \in \cM$, 
$$
\sup_{\theta \in \cM} | \langle \eps, \theta - \theta^* \rangle | \lesssim \sigma \big[ \gamma_2 ( \cM) + s \cdot \diam ( \cM ) \big] 
$$
with probability at least $1 - 2 \exp(-s^2)$, where $\diam(\cdot)$ denotes the diameter of a set.
Moreover, we have with probability $1 - 2 \exp(-s^2)$ that
$$
| \langle \eps, \theta^* \rangle | \lesssim \sigma s \|\theta^*\|_2 .
$$
It follows that with probability at least $1 - 4 \exp(-s^2)$, 
$$
\sup_{\theta \in \cM} \langle \eps, \theta \rangle \le \sup_{\theta \in \cM} | \langle \eps, \theta - \theta^* \rangle | + | \langle \eps, \theta^* \rangle | 
\lesssim \sigma \big[ \gamma_2 ( \cM) + s \cdot \diam ( \cM ) + s \|\theta^*\|_2 \big] .
$$
Since $\diam ( \cM ) + \|\theta^*\|_2 \lesssim \sup_{\theta \in \cM} \|\theta \|_2$, the proof is complete.
\end{proof}

Assouad's lemma is used to prove the lower bounds (see~\cite[Lemma~24.3]{Vaa98}).

\begin{theorem}[Assouad's Lemma]\label{thm:assouad}
Consider a parameter space $\cM$. Let $\p_\theta$ denote the distribution of the observation given that the true parameter is $\theta \in \cM$. Let $\E_\theta$ denote the corresponding expectation.
Suppose that for each $\tau\in\{-1, 1\}^d$, there is an associated $\theta^\tau \in \cM$. 
Then it holds that
$$\underset{\tilde\theta}{\inf}\underset{\theta^* \in\mathcal M}{\sup} \E_{\theta^\ast} \ell^2(\tilde\theta, \theta^*) \geq \frac d8 \, \underset{\tau\neq\tau'}{\min}\frac{\ell^2(\theta^{\tau}, \theta^{\tau'})}{d_H(\tau, \tau')} \, \underset{d_H(\tau, \tau') = 1}{\min}(1 - \|\mathbb P_{\theta^\tau} - \mathbb P_{\theta^{\tau'}}\|_{TV}),$$
where $\ell$ denotes any distance function on $\cM$, $d_H$ denotes the Hamming distance, $\|\cdot\|_{TV}$ denotes the total variation distance, and the infimum is taken over all estimators $\tilde \theta$ measurable with respect to the observation $y$. 
\end{theorem}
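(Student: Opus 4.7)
The plan is to run a standard two-hypothesis-to-many-hypothesis reduction combined with Le Cam's two-point testing bound. First I would replace the supremum over $\cM$ with the average over the finite subfamily $\{\theta^\tau : \tau \in \{-1,1\}^d\}$, using the trivial bound
\[
\inf_{\tilde\theta}\sup_{\theta^\ast\in\cM}\E_{\theta^\ast}\ell^2(\tilde\theta,\theta^\ast)
\;\ge\;\inf_{\tilde\theta}\frac{1}{2^d}\sum_\tau\E_{\theta^\tau}\ell^2(\tilde\theta,\theta^\tau).
\]
Then, setting $\alpha \defn \min_{\tau\ne\tau'}\ell^2(\theta^\tau,\theta^{\tau'})/d_H(\tau,\tau')$, I would convert the estimation problem into a Hamming-distance testing problem via the nearest-neighbor decoder $\hat\tau(\tilde\theta)\in\argmin_\tau\ell(\tilde\theta,\theta^\tau)$. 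By the triangle inequality, $\ell(\theta^{\hat\tau},\theta^\tau)\le 2\ell(\tilde\theta,\theta^\tau)$, so $\ell^2(\tilde\theta,\theta^\tau)\ge \tfrac14\ell^2(\theta^{\hat\tau},\theta^\tau)\ge \tfrac{\alpha}{4}\,d_H(\hat\tau,\tau)$.

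Next I would use linearity of expectation to decompose the Hamming risk coordinate-wise:
\[
\frac{1}{2^d}\sum_\tau \E_{\theta^\tau}d_H(\hat\tau,\tau)
=\sum_{j=1}^d\frac{1}{2^d}\sum_\tau\p_{\theta^\tau}(\hat\tau_j\ne\tau_j),
\]
and pair each $\tau$ with its coordinate flip $\tau^{(j)}$. The classical two-point identity
\[
\p_{\theta^\tau}(\hat\tau_j\ne\tau_j)+\p_{\theta^{\tau^{(j)}}}(\hat\tau_j\ne\tau^{(j)}_j)
\;\ge\;1-\|\p_{\theta^\tau}-\p_{\theta^{\tau^{(j)}}}\|_{TV}
\]
then yields, after averaging the pair and taking the minimum over neighbors,
\[
\frac{1}{2^d}\sum_\tau\p_{\theta^\tau}(\hat\tau_j\ne\tau_j)\;\ge\;\tfrac12\min_{d_H(\tau,\tau')=1}\bigl(1-\|\p_{\theta^\tau}-\p_{\theta^{\tau'}}\|_{TV}\bigr),
\]
uniformly in $j$. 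Summing over $j\in[d]$ produces the factor $d/2$.

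Chaining the inequalities gives the $d/8$ factor in the statement, since the triangle inequality contributes $1/4$ and the coordinate-pairing contributes $1/2$. The only mild subtlety I anticipate is handling ties in the nearest-neighbor decoder, which is resolved by breaking them in any measurable fashion; this does not affect the bound because it only appears through expectations. Everything else is a mechanical chaining of Le Cam's inequality with Assouad-style averaging, so no deep step is required beyond the three displayed inequalities above.
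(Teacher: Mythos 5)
Your proposal is correct: the reduction from the supremum to the hypercube average, the nearest-neighbour decoder with the triangle inequality (factor $1/4$ after squaring), the coordinatewise decomposition of the Hamming risk, and Le Cam's two-point bound applied to each pair $(\tau,\tau^{(j)})$ (factor $1/2$) chain together exactly to the stated $d/8$ bound, and your remark on measurable tie-breaking disposes of the only technical subtlety. Note that the paper does not prove this statement at all; it is quoted as an existing result with a citation to van der Vaart (Lemma~24.3), and your argument is precisely the canonical proof found there, so there is no divergence of approach to report.
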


The following lemma can be proven with basic concentration theory, or follows as a special instance from the Hanson-Wright inequality~\cite{HanWri71, RudVer13}.

\begin{lemma} \label{lem:subg-l2}
Suppose that $\eps \in \R^{n}$ is a random vector with independent centered $\subG(\sigma^2)$ entries. Then it holds that for all $t \ge 0$,
$$
\p \Big\{ \Big| \sum_{i=1}^n \big( \eps_i^2 - \E[ \eps_i^2 ] \big) \Big| \ge t \Big\} \le 2 \exp \Big[ - c \min \Big( \frac{t^2}{ \sigma^4 n} , \frac{t}{ \sigma^2 } \Big) \Big] .
$$
\end{lemma}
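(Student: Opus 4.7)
The plan is the standard two-step reduction: first bound the moment generating function of each squared sub-Gaussian variable to show it is sub-exponential, then apply a Chernoff argument to the sum with an optimization over the dual parameter that produces the two regimes in the bound.

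First, I would verify that each centered summand $Z_i \defn \eps_i^2 - \E[\eps_i^2]$ is sub-exponential with parameters scaling like $\sigma^2$. Since $\eps_i \sim \subG(\sigma^2)$, its Orlicz $\psi_2$-norm is $O(\sigma)$, which by standard Orlicz-norm manipulations (see, e.g., Lemma~2.7.6 of~\cite{Ver18}) implies that the $\psi_1$-norm of $\eps_i^2$ is $O(\sigma^2)$. Equivalently, there exist universal constants $c_1, c_2 > 0$ such that
\begin{equation}
\E[\exp(\lambda Z_i)] \le \exp(c_1 \sigma^4 \lambda^2) \qquad \text{whenever } |\lambda| \le c_2/\sigma^2.
\end{equation}

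Second, by independence, the MGF of the sum $S_n \defn \sum_{i=1}^n Z_i$ factorizes, so
\begin{equation}
\E[\exp(\lambda S_n)] \le \exp(c_1 n \sigma^4 \lambda^2) \qquad \text{for } |\lambda| \le c_2/\sigma^2.
\end{equation}
Applying Markov's inequality then yields $\p(S_n \ge t) \le \exp(c_1 n \sigma^4 \lambda^2 - \lambda t)$. Optimizing over $\lambda$ in the admissible range produces the two regimes: when $t/(2 c_1 n \sigma^4) \le c_2/\sigma^2$ the optimum lies in the interior and gives the sub-Gaussian rate $\exp(-t^2/(4 c_1 n \sigma^4))$; when the unconstrained optimum would exceed $c_2/\sigma^2$ we set $\lambda = c_2/\sigma^2$ and obtain the sub-exponential rate $\exp(-c t/\sigma^2)$. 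Combined, this gives $\p(S_n \ge t) \le \exp[-c \min(t^2/(\sigma^4 n), t/\sigma^2)]$, and a symmetric argument applied to $-S_n$ together with a union bound yields the two-sided version stated in the lemma.

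There is no real obstacle here, as the result is a diagonal special case of Hanson-Wright~\cite{HanWri71,RudVer13}: take the matrix $A = \id_n$ and the vector of sub-Gaussian entries $\eps$, so that $\eps^\top A \eps - \E[\eps^\top A \eps] = S_n$, and Hanson-Wright gives exactly the stated tail with $\|A\|_F^2 = n$ and $\|A\|_{\mathsf{op}} = 1$ (scaled by the sub-Gaussian proxy $\sigma^2$). The self-contained route above is included merely because it is more elementary than invoking the full decoupling and matrix machinery behind Hanson-Wright.
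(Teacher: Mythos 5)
Your proposal is correct and matches the paper's stated approach: the paper offers no detailed proof of Lemma~\ref{lem:subg-l2}, only remarking that it follows ``with basic concentration theory, or... as a special instance from the Hanson-Wright inequality,'' and your argument carries out both routes — the elementary sub-exponential/Bernstein chaining that the phrase ``basic concentration theory'' points to, and the reduction to Hanson-Wright with $A = I_n$. The MGF bound, the two-regime Chernoff optimization, and the identification $\|A\|_F^2 = n$, $\|A\| = 1$ are all exactly what is needed.
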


\section{Lower bounds for the SVT estimator} \label{sec:svt-lower}

Let us focus on the special case $\dimone = \dimtwo = n$, and study a worst-case matrix in $\cM = \cM^{n, n} $, for which the approximation rate given by Proposition~\ref{prop:low-rank-approx} is tight. 

\begin{lemma} \label{lem:approx-lower}
There exists a matrix $\theta \in \cM^{n, n}$ such that 
$$
\min_{\theta_r \text{ has rank } r} \| \theta_r - \theta \|_F^2 \gtrsim \frac{n^2}{r^3} V(\theta)^2 .
$$
\end{lemma}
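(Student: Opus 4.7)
The plan is to take as worst-case example the matrix
\[
\theta \defn \frac{V}{n-1}\, \Done^\dag (\Done^\dag)^\top \in \R^{n \times n},
\]
introduced in~\eqref{eq:jk}, where $V>0$ is an arbitrary scale. I would first verify that $\theta \in \cM^{n,n}$ and that $V(\theta)=V$. Since $\Done \in \R^{(n-1)\times n}$ has full row rank, $\Done\Done^\dag = I_{n-1}$, hence
\[
\Done\theta\Done^\top = \frac{V}{n-1}\,(\Done\Done^\dag)(\Done\Done^\dag)^\top = \frac{V}{n-1}\,I_{n-1},
\]
which is entrywise nonnegative. So $\theta$ is anti-Monge, and $\|\Done\theta\Done^\top\|_1 = V$, which equals $V(\theta)$ by~\eqref{eq:def-v}.

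Second, since $\theta$ is symmetric positive semidefinite, the best rank-$r$ approximation error in Frobenius norm is given by the tail sum of squared eigenvalues,
\[
\min_{\theta_r \text{ has rank } r} \|\theta - \theta_r\|_F^2 = \sum_{i=r+1}^{n-1} \lambda_i(\theta)^2,
\]
where $\lambda_1(\theta)\ge\cdots\ge \lambda_{n-1}(\theta)>0$ are the nonzero eigenvalues (and the last one is zero). Using the explicit singular values of $\Done$ recalled in~\eqref{eq:cv}, these eigenvalues are
\[
\lambda_i(\theta) = \frac{V}{4(n-1)\sin^2\!\bigl(\pi i/(2n)\bigr)},\quad i=1,\dots,n-1,
\]
and are indeed in decreasing order in $i$ because $\sin$ is increasing on $[0,\pi/2]$.

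Third, I would apply the bound $\sin(x)\le x$ from~\eqref{eq:jp} to obtain $\lambda_i(\theta)^2 \gtrsim n^2 V^2/i^4$, and then sum over the range $i\in\{r+1,\dots,2r\}$ (valid in the regime of interest $r \le (n-1)/2$), yielding at least $r$ terms each of order $V^2n^2/r^4$, for a total of $\gtrsim n^2 V^2/r^3$ as required. In the remaining regime $r$ close to $n$, the desired lower bound $V^2/n$ follows elementarily from keeping a single term near $i = n/2$, where $\lambda_i(\theta) \asymp V/(n-1)$.

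The entire proof is essentially bookkeeping; the one conceptual point is to recognize that $\theta$ is a rescaling of the Moore--Penrose pseudoinverse of $\Done\Done^\top$, whose eigenvalues grow like $(n/i)^2$. This slow decay is exactly what prevents any low-rank approximation from beating the $n^2 V(\theta)^2/r^3$ rate of Proposition~\ref{prop:low-rank-approx}, confirming its tightness.
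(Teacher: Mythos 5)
Your proof takes essentially the same route as the paper: the same choice $\theta \propto \Done^\dag(\Done^\dag)^\top$ (you use the normalization $V/(n-1)$ from~\eqref{eq:jk}, which does make $V(\theta)=V$ exactly; the paper uses $V_0/n$), the same observation that $\Done\theta\Done^\top$ is a positive multiple of the identity so $\theta$ is anti-Monge, the same explicit eigenvalues read off from~\eqref{eq:cv}, and the same lower bound $\lambda_i \gtrsim Vn/i^2$ from $\sin x \le x$, giving $\sum_{i>r}\lambda_i^2 \gtrsim n^2V^2/r^3$ by summing over $i \in (r, 2r]$.

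One small inaccuracy in your extra boundary-case discussion: the single term near $i=n/2$ contributes $\lambda_{n/2}^2 \asymp V^2/n^2$, not $V^2/n$, and once $r \ge n/2$ it is not even part of the tail sum. The conclusion you want, $\sum_{i>r}\lambda_i^2 \gtrsim V^2/n$ for $r \le cn$ with $c<1$ fixed, does hold because there are then $\Omega(n)$ tail terms each of size $\gtrsim V^2/n^2$, but not by retaining a single term. This has no substantive consequence: since $\theta$ has rank $n-1$, the lemma's conclusion cannot hold for $r\ge n-1$ (the left side vanishes), so a restriction of the form $r\lesssim n$ is implicit in the statement, and the paper's own proof silently assumes it; your argument in the regime $r\le(n-1)/2$ is already complete and correct.
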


\begin{proof}
Consider the matrix
$$
\theta \defn \frac{V_0}{n} D^\dag (D^\dag)^\top 
$$
for $V_0 > 0$. It is anti-Monge because
$$
D \theta D^\top = \frac{V_0}{n} I \ge 0 .
$$
It also follows from~\eqref{eq:def-v} that
$$
V( \theta ) = \| D \theta D^\top \|_1 = \frac{V_0}{n} \| I \|_1 = V_0 .
$$

Moreover, the singular values of $D$ are given in~\eqref{eq:cv}, so the eigenvalues of $\theta$ are 
$$
\mu_i = \frac {V_0}{4 n} \Big( \sin \frac{\pi i}{2 n} \Big)^{-2} .
$$
Using the fact that $x/2 \le \sin x \le x$ for $ x \in [0, \pi/2]$, we obtain
\begin{align}
\frac{ V_0 n }{ \pi^2 i^2 }  \le \mu_i \le \frac{ 4 V_0 n }{ \pi^2 i^2 } . 
\label{eq:mu-bd}
\end{align}
If the spectral decomposition of $\theta$ is $\theta = \sum_{i=1}^n \mu_i w_i w_i^\top$,
then the best rank-$r$ approximation of $\theta$ in the Frobenius norm is $\theta_r = \sum_{i=1}^r \mu_i w_i w_i^\top$, and
$$
\| \theta_r - \theta \|_F^2 = \sum_{i=r+1}^n \mu_i^2 \ge \sum_{i=r+1}^n \frac{ V_0^2 n^2 }{ \pi^4 i^4 } \gtrsim \frac{ V_0^2 n^2}{ r^3 }  ,
$$
which completes the proof. 
\end{proof}

Since the anti-Monge matrix $\theta$ in the above proof cannot be approximated by a low-rank matrix at a better rate, we conjecture that for this choice of $\theta$, the rate of convergence given by Theorem~\ref{thm:svt} for the SVT estimator~\eqref{eq:svt} is tight. 
Intuitively, if we set threshold $\rho$ in definition~\eqref{eq:svt} to be larger, then the resulting estimator has a lower rank, thus incurring a larger bias according to the above lemma. On the other hand, setting threshold $\rho$ to be smaller incurs a larger variance due to the noise $\eps$. 

More precisely, under the model $y = \theta + \eps$ where $\eps$ has i.i.d. $\cN(0, \sigma^2)$ entries, we conjecture that for any choice of threshold $\rho$ in the estimator~\eqref{eq:svt}, it holds with constant probability that 
$$
\frac{1}{n^2} \| \thetasvt - \theta \|_F^2 \gtrsim \frac{\sigma^2}{n} + \frac{ \sigma^{3/2} V(\theta)^{1/2} }{ n^{3/4} } .
$$
This is because we believe that the bias-variance trade-off in the proof of Theorem~\ref{thm:svt} is optimal. However, we are unable to prove a lower bound based on a similar argument in \cite{ShaBalGunWai17}, since there, the authors are able to exploit a varying signal-to-noise ratio within the classes of matrices they consider.
This allows them to employ a triangle inequality argument instead of a explicit bias-variance decomposition that holds with equality. Potential other approaches include analyzing an explicit unbiased estimate of the risk for SVT~\cite{CanSinTrz13}, and studying the exact asymptotic optimal choice of threshold $\rho$ as in~\cite{GavDon14}. However, since any of these approaches require asymptotic random matrix theory, we consider them beyond the scope of the current work.

\bibliographystyle{abbrv}
\bibliography{MTP2}

\end{document}